\tikzset{cd/.style=matrix of math nodes,row sep=2em,column sep=2em, text height=1.5ex, text depth=0.5ex}
\tikzset{cdar/.style=->,auto}
\tikzset{mid/.style={anchor=mid}} 
\tikzset{dar/.style={double,double equal sign distance,-implies}}
\tikzset{narrowfill/.style={inner sep=1pt, fill=white}}
\setlist[enumerate,1]{label=\textup{(\arabic*)}}
\setlist[enumerate,2]{label=\textup{(\alph*)}}
\renewcommand{\PrintDOI}[1]{\href{http://dx.doi.org/\detokenize{#1}}{doi: \detokenize{#1}}}
\numberwithin{equation}{section}
\theoremstyle{plain}
\newtheorem{theorem}[equation]{Theorem}
\newtheorem{lemma}[equation]{Lemma}
\newtheorem{proposition}[equation]{Proposition}
\newtheorem{corollary}[equation]{Corollary}
\theoremstyle{definition}
\theoremstyle{remark}
\newtheorem{remark}[equation]{Remark}
\newcommand*{\C}{\mathbb C}
\newcommand*{\Z}{\mathbb Z}
\newcommand*{\N}{\mathbb N}
\newcommand*{\T}{\mathbb T}
\newcommand*{\id}{\textup{id}}
\newcommand*{\KK}{\textup{KK}}
\newcommand*{\K}{\textup{K}}
\newcommand*{\7}[1]{[#1]}
\newcommand*{\tautr}[2]{\tau_{[#1]}^{[#2]}}
\newcommand*{\ev}{\textup{ev}}
\newcommand*{\Cst}{\textup C^*}
\newcommand*{\Star}{\texorpdfstring{$^*$\nb-}{*-}}
\newcommand*{\Cont}{\textup C} 
\newcommand*{\Ab}{\mathfrak{Ab}} 
\newcommand*{\Abel}[1][A]{\mathfrak #1} 
\newcommand*{\Tri}[1][T]{\mathfrak #1}  
\newcommand*{\KKcat}{\mathfrak{KK}}
\newcommand*{\Boot}{\mathfrak{B}}
\newcommand*{\Ideal}{\mathfrak I}
\newcommand*{\Proj}{\mathfrak P} 
\newcommand*{\lad}{\vdash}
\newcommand*{\nb}{\nobreakdash}  
\newcommand*{\congto}{\xrightarrow\sim}
\newcommand{\idealin}{\mathrel{\triangleleft}} 
\newcommand*{\matrixintikz}{\left( \begin{array}{@{}ll@{}}
    \scriptstyle\varphi_{\7{1}}^{\7{1}}&\scriptstyle0\\
    \scriptstyle\varphi_{\7{1}}^{\7{1,2}}&\scriptstyle\varphi_{\7{1,2}}^{\7{1,2}}
  \end{array}\right)}
\DeclarePairedDelimiter{\gen}{\langle}{\rangle}
\DeclarePairedDelimiterX{\braket}[2]{\langle}{\rangle}{#1\,\delimsize\vert\,\mathopen{}#2}
\DeclarePairedDelimiterX{\braketop}[3]{\langle}{\rangle}{#1\,\delimsize\vert #2\delimsize\vert\,\mathopen{}#3}
\DeclarePairedDelimiterX{\BRAKET}[2]{\langle}{\rangle}{\!\delimsize\langle#1\,\delimsize\vert\,\mathopen{}#2\delimsize\rangle\!}
\DeclarePairedDelimiterX{\setgiven}[2]{\{}{\}}{#1\,{:}\,\mathopen{}#2}
\DeclareMathOperator{\Ext}{Ext}
\DeclareMathOperator{\Tor}{Tor}
\DeclareMathOperator{\Hom}{Hom}
\DeclareMathOperator{\Aut}{Aut}
\DeclareMathOperator{\coker}{coker}
\newcommand*{\inOb}{\mathrel{\in\in}}
\newcommand*{\defeq}{\mathrel{\vcentcolon=}}
\newcommand*{\into}{\rightarrowtail}
\newcommand*{\prto}{\twoheadrightarrow}
\newbox\xrat@below
\newbox\xrat@above
\newcommand{\xinto}[1]{
  \setbox\xrat@above=\hbox{\ensuremath{\scriptstyle #1}}%
  \pgfmathsetlengthmacro{\xrat@len}{(\wd\xrat@above)+.6em}%
  \mathrel{\tikz [>->,baseline=-.75ex]
                 \draw (0,0) -- node[above=-2pt] {\box\xrat@above}
                       (\xrat@len,0) ;}}
\newcommand{\xprto}[1]{
  \setbox\xrat@above=\hbox{\ensuremath{\scriptstyle #1}}%
  \pgfmathsetlengthmacro{\xrat@len}{(\wd\xrat@above)+.6em}%
  \mathrel{\tikz [->>,baseline=-.75ex]
                 \draw (0,0) -- node[above=-2pt] {\box\xrat@above}
                       (\xrat@len,0) ;}}                       
\begin{document}

\title[Computing obstruction classes]{A more general method to classify up to equivariant KK-equivalence II: Computing obstruction classes}

\author{Ralf Meyer}
\email{rmeyer2@uni-goettingen.de}

\address{Mathematisches Institut\\
  Georg-August Universit\"at G\"ottingen\\
  Bunsenstra\ss{}e 3--5\\
  37073 G\"ottingen\\
  Germany}

\begin{abstract}
  We describe Universal Coefficient Theorems for the equivariant
  Kasparov theory for \(\Cst\)\nb-algebras
  with an action of the group of integers or over a unique path space,
  using KK-valued invariants.  We compare the resulting classification
  up to equivariant KK-equivalence with the recent classification
  theorem involving a K\nb-theoretic invariant together with an
  obstruction class in a certain \(\Ext^2\)-group
  and with the classification by filtrated \(\K\)\nb-theory.
  This is based on a general theorem that computes these obstruction
  classes.
\end{abstract}

\subjclass[2010]{Primary 19K35; Secondary 18E30, 46L35}

\thanks{Supported by the DFG grant
  ``Classification of non-simple purely infinite C*-algebras.''}

\maketitle

\section{Introduction}
\label{sec:intro}

Objects in a triangulated category, such as equivariant KK-categories,
may be classified up to isomorphism using a primary homological
invariant and a secondary ``obstruction class'' provided they have a
projective resolution of length~\(2\)
in a suitable sense.  This method was applied
in~\cite{Bentmann-Meyer:More_general} to objects in the
circle-equivariant KK-category \(\KKcat^\T\),
the equivariant KK-category \(\KKcat^X\)
for \(\Cst\)\nb-algebras
over a finite, unique path space~\(X\),
and graph \(\Cst\)\nb-algebras
with finite ideal lattice.  Here we compute the obstruction classes
that occur in this classification.  This makes the classification for
objects of \(\KKcat^\T\) and~\(\KKcat^X\) more explicit.

The result suggests, in fact, a different invariant for objects in
these categories that is fine enough to admit a Universal Coefficient
Theorem.  The price to pay is that the invariant uses bivariant
\(\K\)\nb-theory
instead of ordinary \(\K\)\nb-theory.
We explain this for the equivariant bootstrap class
in the category~\(\KKcat^\Z\);
the latter is equivalent to~\(\KKcat^\T\) by Baaj--Skandalis duality.

An object of~\(\KKcat^\Z\)
is a \(\Cst\)\nb-algebra~\(A\)
with a \(\Z\)\nb-action,
which is generated by a single automorphism~\(\alpha\).
The most obvious homological invariant on~\(\KKcat^\Z\)
maps this to the \(\K\)\nb-theory
\(\K_*(A)\)
with the module structure over the group ring \(\Z[x,x^{-1}]\)
of~\(\Z\)
that is induced by~\(\alpha\).
Let~\(\Abel\)
be the category of countable, \(\Z/2\)\nb-graded
modules over \(\Z[x,x^{-1}]\).
This is a stable Abelian category.  The \(\K\)\nb-theory
described above defines a stable homological functor
\(F^\Z_\K\colon \KKcat^\Z \to \Abel\).
The category~\(\Abel\)
has cohomological dimension~\(2\),
that is, any object has a projective resolution of length~\(2\).
Let~\(\Abel\delta\)
be the additive category of pairs~\((A,\delta)\)
with \(A\inOb\Abel\)
and \(\delta\in\Ext^2_{\Abel}(\Sigma A, A)\);
morphisms from \((A,\delta)\)
to \((A',\delta')\)
in~\(\Abel\delta\)
are morphisms~\(f\)
from~\(A\)
to~\(A'\)
in~\(\Abel\)
with \(\delta' f=f\delta\).
It is shown in~\cite{Bentmann-Meyer:More_general} that isomorphism
classes of objects in the bootstrap class in~\(\KKcat^\Z\)
are in bijection with isomorphism classes of objects
in~\(\Abel\delta\).
In particular, any object~\(A\)
of~\(\Abel\)
lifts to an object in the bootstrap class in~\(\KKcat^\Z\).
The lifting is, however, not unique: different liftings of~\(A\)
are in bijection with \(\Ext^2_{\Abel}(\Sigma A, A)\);
here two liftings \(B_1,B_2\)
are identified if there is an isomorphism \(B_1\cong B_2\)
that induces the identity map on \(A=F^\Z_\K(B_1)=F^\Z_\K(B_2)\).

In this article, we compute the obstruction class of an object
of~\(\KKcat^\Z\)
explicitly.  This question remained open
in~\cite{Bentmann-Meyer:More_general}.  Let \((A,\alpha)\)
be a \(\Cst\)\nb-algebra
with an automorphism~\(\alpha\)
as above.  The exact sequence in the Universal Coefficient Theorem
splits, so that
\[
\KK_0(A,A) \cong
\Hom\bigl(\K_*(A),\K_*(A)\bigr)
\oplus \Ext^1\bigl(\K_{*+1}(A),\K_*(A)\bigr).
\]
While the splitting above is not natural, it is canonical enough to
associate well defined elements
\(\alpha_0\in\Hom\bigl(\K_*(A),\K_*(A)\bigr)\)
and \(\alpha_1\in \Ext^1\bigl(\K_{*+1}(A),\K_*(A)\bigr)\)
to~\(\alpha\).
Here~\(\alpha_0\)
is the action on \(\K_*(A)\)
induced by~\(\alpha\),
which is part of the \(\Z[x,x^{-1}]\)-module
structure on~\(F^\Z_\K(A)\).
We show how to compute the obstruction class from~\(\alpha_1\).
Namely, it is~\(\mu^*(\alpha_1)\) for a canonical map
\[
\mu^*\colon \Ext^1\bigl(\K_{*+1}(A),\K_*(A)\bigr) \to
\Ext^2_{\Z[x,x^{-1}]}\bigl(\K_{*+1}(A),\K_*(A)\bigr).
\]
Thus an object \((A,\alpha)\)
of \(\KKcat^\Z\)
is determined uniquely up to isomorphism by~\(A\)
as an object of~\(\KKcat\)
together with the class~\([\alpha]\)
of~\(\alpha\)
in \(\KK_0(A,A)\).
We treat the pair \((A,[\alpha])\)
as an object of a certain exact category~\(\KKcat[\Z]\)
with a suspension automorphism.  The forgetful functor
\(F^\Z\colon \KKcat^\Z \to \KKcat[\Z]\)
is a stable homological functor.  The fact that it gives a complete
invariant for objects of~\(\KKcat^\Z\)
follows from a Universal Coefficient Theorem computing
\(\KK^\Z_*(A,B)\).
This Universal Coefficient Theorem is already proven
in~\cite{Meyer-Nest:Homology_in_KK}, and it is shown there to be
equivalent to a Pimsner--Voiculescu like exact sequence for
\(\KK^\Z_*(A,B)\).

The Universal Coefficient Theorem for the invariant
\(F^\Z\colon \KKcat^\Z \to \KKcat[\Z]\)
is based on projective resolutions of length~\(1\)
for the stable homological ideal \(\ker F^\Z\).
We use these resolutions to compute the obstruction class related to
the homological invariant \(F^\Z_\K(A) \defeq \K_*(A)\).
Namely, let \(0\to B_1 \to B_0\to A \to 0\)
be a projective resolution of length~\(1\)
for the ideal \(\ker F^\Z\)
in \(\KKcat^\Z\).
This remains exact with respect to the larger stable homological ideal
\(\ker F^\Z_\K\).
The objects \(B_1\)
and~\(B_0\)
are no longer projective for \(\ker F^\Z_\K\),
but they have projective resolutions of length~\(1\).
Thus \(\KK^\Z_0(B_1,B_0)\)
is computed by a Universal Coefficient Theorem, which splits
non-naturally.  So the arrow \(B_1\to B_0\)
gives a class in \(\Ext^1_{\Z[x,x^{-1}]}\bigl(\K_{*+1}(B_1),\K_*(B_0)\bigr)\).
It must determine the obstruction class of~\(A\)
because~\(A\)
is \(\KK^\Z\)-equivalent
to the cone of the map \(B_1\to B_0\).
The result is the formula for the obstruction class of~\(A\)
asserted above.  This computation of obstruction classes is carried
out in Section~\ref{sec:triangulated} in the same generality as
in~\cite{Bentmann-Meyer:More_general}, using a triangulated category
with a stable homological ideal with enough projectives.

The Universal Coefficient Theorem for \(\KKcat^\Z\)
is developed in Section~\ref{sec:UCT_Z}.  This is followed by an
analogous treatment for the category of \(\Cst\)\nb-algebras
over a unique path space~\(X\)
in Section~\ref{sec:UCT_X}.  In
Section~\ref{sec:compare_classification}, we compare the
classification theorems that our two invariants give for objects in
the bootstrap classes in \(\KKcat^\Z\)
and \(\KKcat^X\).
In Section~\ref{sec:filtrated_K}, we study \(\KKcat^X\)
in the special case where~\(X\)
is totally ordered.  Then filtrated \(\K\)\nb-theory
is a third complete invariant for the same objects
(see~\cite{Meyer-Nest:Filtrated_K}), and we compare this invariant
with the new ones.  This is a rather long homological computation.  In
Section~\ref{sec:extensions}, the result of
Section~\ref{sec:filtrated_K} is formulated more nicely if~\(X\)
has only two points, so that \(\Cst\)\nb-algebras
over~\(X\) are extensions of \(\Cst\)\nb-algebras.

\section{Universal Coefficient Theorems with KK-valued functors}
\label{sec:KK-valued_UCT}

This section develops Universal Coefficient Theorems for the
equivariant Kasparov categories \(\KKcat^\Z\)
and \(\KKcat^X\)
for a unique path space~\(X\),
which are based on forgetful functors to~\(\KKcat\).
We shall use the general framework developed
in~\cite{Meyer-Nest:Homology_in_KK} for doing relative homological
algebra in triangulated categories.  The starting point for this is a
stable homological ideal~\(\Ideal\)
in a triangulated category~\(\Tri\).
Being stable and homological means that there is a stable homological
functor \(F\colon \Tri \to \Abel\)
to some stable Abelian category~\(\Abel\) with
\[
\Ideal(A,B) = \ker F(A,B)
\defeq \setgiven{g\in\Tri(A,B)}{F(g)=0}.
\]
Here stability of \(\Abel\) and~\(F\)
means that~\(\Abel\)
is equipped with a suspension automorphism and that
\(F\circ \Sigma_{\Tri} = \Sigma_{\Abel} \circ F\).

A stable homological ideal~\(\Ideal\)
allows to carry over various notions from homological algebra
to~\(\Tri\).
In particular, there are \(\Ideal\)\nb-exact
chain complexes, \(\Ideal\)\nb-projective
objects and \(\Ideal\)\nb-projective
resolutions in~\(\Tri\),
which then allow to define \(\Ideal\)\nb-derived
functors.  We shall only be interested in cases where there are enough
\(\Ideal\)\nb-projective
objects.  The thick subcategory~\(\gen{\Proj_\Ideal}\)
generated by the \(\Ideal\)\nb-projective
objects is an analogue of the ``bootstrap class'' in Kasparov theory.
If \(A\inOb \gen{\Proj_\Ideal}\)
has an \(\Ideal\)\nb-projective
resolution of length~\(1\),
then the graded group \(\Tri_*(A,B)\)
for \(B\inOb\Tri\)
may be computed by a Universal Coefficient Theorem (UCT).  The Hom and
Ext groups in this UCT are those in a certain Abelian category,
namely, the target category of the universal \(\Ideal\)\nb-exact
stable homological functor~\(F^u\).
The functor~\(F^u\)
strongly classifies objects of~\(\gen{\Proj_\Ideal}\)
with an \(\Ideal\)\nb-projective
resolution of length~\(1\).
That is, any isomorphism \(F^u(A) \cong F^u(B)\)
between such objects is induced by an isomorphism \(A\cong B\)
in~\(\Tri\).
In many cases of interest, the universal functor~\(F^u\)
is quite explicit.  The following theorem allows to recognise it:

\begin{theorem}[\cite{Meyer-Nest:Homology_in_KK}*{Theorem~57}]
  \label{the:characterise_universal_homological}
  Let~\(\Tri\)
  be a triangulated category, let \(\Ideal \subseteq \Tri\)
  be a stable homological ideal, and let \(F\colon \Tri\to\Abel\)
  be an \(\Ideal\)\nb-exact
  stable homological functor into a stable Abelian category~\(\Abel\).
  Let~\(\Proj\Abel\)
  be the class of projective objects in~\(\Abel\).
  Suppose that idempotent morphisms in \(\Tri\)
  split.  The functor~\(F\)
  is the universal \(\Ideal\)\nb-exact
  stable homological functor to a stable Abelian category and there
  are enough \(\Ideal\)\nb-projective
  objects in~\(\Tri\) if and only if
  \begin{enumerate}
  \item \(\Abel\) has enough projective objects;
  \item the adjoint functor~\(F^\lad\)
    of~\(F\) is defined on~\(\Proj\Abel\);
  \item \(F\circ F^\lad(A) \cong A\) for all \(A\inOb\Proj\Abel\).
  \end{enumerate}
\end{theorem}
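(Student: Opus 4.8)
The plan is to prove the two implications of the equivalence within the relative homological algebra attached to~\(\Ideal\) as in~\cite{Meyer-Nest:Homology_in_KK}. First I would recall the basic dictionary: a morphism~\(f\) in~\(\Tri\) is \(\Ideal\)\nb-epic exactly when \(F(f)\) is epic; an object~\(P\) is \(\Ideal\)\nb-projective exactly when \(\Tri(P,-)\) carries \(\Ideal\)\nb-epic morphisms to surjections; and, since~\(F\) is homological, a chain complex in~\(\Tri\) is \(\Ideal\)\nb-exact exactly when~\(F\) sends it to an exact sequence. Saying that~\(F\) is the universal \(\Ideal\)\nb-exact stable homological functor means that every \(\Ideal\)\nb-exact stable homological functor \(G\colon\Tri\to\Abel'\) into a stable Abelian category factors as \(G\cong\bar G\circ F\) for an exact functor~\(\bar G\), uniquely up to natural isomorphism; so in that direction the job is to manufacture such factorisations.

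For the implication (1)--(3)~\(\Rightarrow\)~universal plus enough \(\Ideal\)\nb-projectives I would argue as follows. The partial left adjoint~\(F^\lad\) sends \(\Proj\Abel\) into the \(\Ideal\)\nb-projective objects: for projective~\(P\), the map \(\Tri(F^\lad P,f)\) corresponds under the adjunction to \(\Abel(P,F(f))\), which is surjective whenever~\(F(f)\) is epic, i.e.\ whenever~\(f\) is \(\Ideal\)\nb-epic. To produce enough \(\Ideal\)\nb-projectives, take \(A\inOb\Tri\), choose an epimorphism \(p\colon P\prto F(A)\) with~\(P\) projective using~(1), and let \(\hat p\colon F^\lad P\to A\) be the adjunct. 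The unit \(\eta_P\colon P\to F F^\lad P\) is invertible by~(3), and the triangular identity gives \(F(\hat p)\circ\eta_P=p\); hence \(F(\hat p)\) is epic, so \(\hat p\) is an \(\Ideal\)\nb-epic morphism onto~\(A\) from an \(\Ideal\)\nb-projective object, and iterating yields \(\Ideal\)\nb-projective resolutions. Moreover \(\Tri(F^\lad P,F^\lad P')\cong\Abel(P,F F^\lad P')\cong\Abel(P,P')\) for projective \(P,P'\) by~(3), so maps and homotopies between complexes of such objects lift uniquely along~\(F^\lad\); this lifting lemma is what makes the next step work.

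Given an \(\Ideal\)\nb-exact stable homological \(G\colon\Tri\to\Abel'\), I would define~\(\bar G\) on \(A\inOb\Abel\) by choosing a projective presentation \(P_1\xrightarrow{\varphi}P_0\to A\to0\), lifting~\(\varphi\) to \(\hat\varphi\colon F^\lad P_1\to F^\lad P_0\), and setting \(\bar G(A)\defeq\coker\bigl(G(\hat\varphi)\bigr)\); the lifting lemma makes this independent of choices, functorial, and right exact. The crucial identity \(\bar G\circ F\cong G\) holds because, for \(B\inOb\Tri\), the first two terms of an \(\Ideal\)\nb-projective resolution \(\dots\to Q_1\to Q_0\to B\to0\) present \(F(B)\), and since the resolution is \(\Ideal\)\nb-exact and~\(G\) is \(\Ideal\)\nb-exact, \(G(Q_1)\to G(Q_0)\to G(B)\to0\) is exact, whence \(G(B)\cong\coker\bigl(G(Q_1)\to G(Q_0)\bigr)\cong\bar G(F(B))\). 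Exactness of~\(\bar G\), beyond right exactness, then follows by realising a short exact sequence in~\(\Abel\) as \(0\to F(B'')\to F(B)\to F(B')\to0\) coming from a triangle \(B''\to B\to B'\to\Sigma B''\) in~\(\Tri\) and applying the stable homological functor~\(G\): its long exact sequence collapses, using \(\bar G\circ F\cong G\) and the stability \(G\circ\Sigma=\Sigma\circ G\), to the desired short exact sequence of \(\bar G\)-values. Since~\(\bar G\) is determined on the essential image of~\(F\), which contains \(\Proj\Abel\) by~(3) and generates~\(\Abel\) by~(1), it is unique; thus~\(F\) is universal.

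For the converse I would assume~\(F\) universal with enough \(\Ideal\)\nb-projectives in~\(\Tri\). The standard description of the universal target shows that, for \(\Ideal\)\nb-projective~\(P\), the functor~\(F\) induces an isomorphism \(\Tri(P,B)\congto\Abel(F(P),F(B))\) and that \(\Proj\Abel\) consists exactly of the retracts of the objects~\(F(P)\); transporting \(\Ideal\)\nb-projective resolutions along~\(F\) then gives~(1). For~(2) and~(3), given \(P'\inOb\Proj\Abel\), write it as the image of an idempotent on some~\(F(P)\) with~\(P\) \(\Ideal\)\nb-projective, lift the idempotent uniquely to~\(P\) using the isomorphism above, and split it in~\(\Tri\) — here the splitting of idempotents in~\(\Tri\) is used — to get an \(\Ideal\)\nb-projective~\(Q\) with \(F(Q)\cong P'\); passing to the corresponding retracts of the isomorphisms \(\Tri(P,B)\congto\Abel(F(P),F(B))\) yields \(\Tri(Q,B)\cong\Abel(P',F(B))\) naturally in~\(B\), so \(F^\lad(P')\defeq Q\) is defined and \(F F^\lad(P')\cong P'\). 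I expect the main obstacle to be the universality implication, and inside it the construction and \emph{exactness} of~\(\bar G\): that is where the abundance of \(\Ideal\)\nb-projectives, the lifting lemma, the identity \(\bar G\circ F\cong G\), and the stability of every functor in sight must be assembled carefully, with the suspension bookkeeping carried along throughout.
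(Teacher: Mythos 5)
This theorem is quoted in the paper from \cite{Meyer-Nest:Homology_in_KK}*{Theorem~57}; the paper itself gives no proof, so there is no ``paper's own proof'' here to compare with, and I can only assess your sketch on its merits.

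Your overall architecture is sound and matches the natural line of attack: for the forward implication, show that $F^\lad$ carries projectives to $\Ideal$\nb-projectives, use the unit isomorphism from~(3) together with the triangular identity to produce $\Ideal$\nb-epimorphisms from $\Ideal$\nb-projectives onto arbitrary objects, establish the lifting lemma $\Tri(F^\lad P,F^\lad P')\cong\Abel(P,P')$, and then define $\bar G$ on $\Abel$ via projective presentations lifted along $F^\lad$, checking $\bar G\circ F\cong G$ by comparing with an $\Ideal$\nb-projective resolution. All of that is correct, and the converse sketch (split a lifted idempotent to realise a projective $P'$ as $F(Q)$ with $Q$ $\Ideal$\nb-projective) is also the right idea and uses the idempotent-splitting hypothesis exactly where it is needed.

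There is a genuine gap in the exactness of $\bar G$. You argue that any short exact sequence in $\Abel$ can be ``realised'' as $0\to F(B'')\to F(B)\to F(B')\to 0$ for an exact triangle $B''\to B\to B'\to\Sigma B''$. This presupposes that $F$ is essentially surjective, which is not part of the hypotheses and fails in general: an object of $\Abel$ need not be of the form $F(B)$, and even when it is, an arbitrary monomorphism $F(B'')\into F(B)$ need not lift to a morphism in $\Tri$ whose cone is sent by $F$ to the cokernel (that lifting is what you get only for objects admitting $\Ideal$\nb-projective resolutions of length~$1$, as used elsewhere in the paper). Since $\bar G$ is right exact by construction, the correct route is to show $L_1\bar G=0$: for $A\inOb\Abel$ take a projective resolution $P_\bullet\prto A$, note $\bar G(P_n)\cong G(F^\lad P_n)$, and show that $H_n\bigl(G(F^\lad P_\bullet)\bigr)=0$ for $n\ge1$ using that $F(F^\lad P_\bullet)\cong P_\bullet$ is exact in positive degrees and that $G$ is $\Ideal$\nb-exact. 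That last step is exactly what your sketch skips, and it is where the machinery of $\Ideal$\nb-exact chain complexes (not just $\Ideal$\nb-exact triangles) has to be invoked; it does not follow from applying $G$ to a single triangle. Until that is supplied, the universality claim, and with it the forward implication, is not established.

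A smaller point: condition~(3) says $F\circ F^\lad(A)\cong A$ but does not by itself assert that the unit of the adjunction is the isomorphism; you use invertibility of $\eta_P$ and the triangular identity, so you should justify that the natural isomorphism in~(3) can be taken to be the unit (this is true here, but should be said). Likewise, in the converse direction the claims that $F$ induces isomorphisms $\Tri(P,B)\congto\Abel(F(P),F(B))$ for $\Ideal$\nb-projective $P$ and that $\Proj\Abel$ consists of retracts of $F(P)$'s are the content of nontrivial lemmas about the universal target; calling them ``standard'' is acceptable in a sketch, but a complete proof would have to establish them from the universal property.
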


The ideals to be treated in this section are defined as the kernel on
morphisms of a triangulated functor \(F\colon \Tri\to\Tri[S]\)
to another triangulated category~\(\Tri[S]\).
Ideals of this form are already treated
in~\cite{Meyer-Nest:Homology_in_KK}, using a construction of Freyd to
embed~\(\Tri[S]\)
into an Abelian category.  Here we unify these two cases by allowing
the functor~\(F\)
to take values in an \emph{exact category} (see \cites{Buehler:Exact,
  Keller:Handbook}).  An exact category is an additive category with
a chosen class of ``admissible'' extensions.  A typical example is a
full, additive subcategory of an Abelian category that is closed
under extensions.  An Abelian category~\(\Abel\)
is exact where all extensions are admissible.  And a triangulated
category~\(\Tri[S]\)
is exact where only split extensions are admissible.  So exact
categories contain the two cases treated
in~\cite{Meyer-Nest:Homology_in_KK}.  In many examples of
classification results using a UCT, the range of the universal
\(\Ideal\)\nb-exact
functor~\(F^u_\mathrm{old}\)
is a certain exact subcategory of a module category, and only objects
in this exact subcategory have projective resolutions of length~\(1\).
This holds for the UCTs in \cites{Bentmann:Thesis,
  Bentmann-Koehler:UCT, Koehler:Thesis, Meyer-Nest:Filtrated_K} (see
Remark~\ref{rem:exact_NT} for one of these cases).  The phenomenon is
understood better in~\cite{Ambrogio-Stevenson-Stovicek:Gorenstein}.

Quillen's Embedding Theorem allows to embed any exact category into an
Abelian one in a fully faithful, fully exact way.  So all results in
the general theory carry over to the case where~\(F\)
takes values in an exact category.  In particular, we may modify the
definition of the universal \(\Ideal\)\nb-exact
stable homological functor by allowing exact categories as target.
Let \(F^u_\mathrm{old}\colon \Tri\to\Abel\)
be the universal \(\Ideal\)\nb-exact
functor into an Abelian category as in
Theorem~\ref{the:characterise_universal_homological}.  Let
\(\Abel[E]\subseteq\Abel\)
be the exact subcategory of~\(\Abel\)
generated by the range of~\(F^u_\mathrm{old}\).
Let \(F^u\colon \Tri\to\Abel[E]\)
be~\(F^u_\mathrm{old}\)
viewed as a functor to~\(\Abel[E]\).
This is the universal \(\Ideal\)\nb-exact
functor in the new sense.  Its universal property follows from the one
of \(F^u_\mathrm{old}\)
and Quillen's Embedding Theorem.  If~\(\Ideal\)
is defined by a forgetful functor \(F\colon \Tri\to\Tri[S]\)
to another triangulated category, then the universal homological
functor to an Abelian category typically uses Freyd's embedding
of~\(\Tri[S]\)
into an Abelian category.  In contrast, we shall see that in many
examples the universal \(\ker F\)\nb-exact
functor to an exact category gives an exact category that is very
closely related to~\(\Tri[S]\).

\begin{remark}
  \label{rem:universal_to_exact}
  Assume that all objects in~\(\Tri\)
  have an \(\Ideal\)\nb-projective
  resolution of finite length.  Then the image of~\(F^u_\mathrm{old}\)
  is contained in the subcategory of objects in~\(\Abel\)
  with a finite-length projective resolution.  This subcategory is
  exact.  And it is already generated as an exact category by the
  projective objects in~\(\Abel\):
  this is proved by induction on the length of a resolution.  Hence
  the image of~\(F^u_\mathrm{new}\)
  must be equal to the subcategory of objects with a finite-length
  projective resolution.  So \(F^u_\mathrm{new} = F^u_\mathrm{old}\)
  if and only if \emph{all} objects of~\(\Abel\)
  have a finite-length projective resolution.
\end{remark}

\subsection{Actions of the group of integers}
\label{sec:UCT_Z}

We now treat a concrete example, namely, the case
\(\Tri\defeq \KKcat^\Z\).
This is a triangulated category.  Its objects are pairs \((A,\alpha)\)
with a separable \(\Cst\)\nb-algebra~\(A\)
and \(\alpha\in\Aut(A)\);
the latter generates an action of~\(\Z\)
on~\(A\)
by automorphisms.  The arrows in \(\KKcat^\Z\)
are the Kasparov groups \(\KK^\Z_0(A,B)\),
and the composition is the Kasparov product; we have dropped the
automorphisms from our notation as usual to avoid clutter.  The
triangulated category structure on~\(\KKcat^\Z\)
is described in \cite{Meyer-Nest:BC}*{Appendix}.  The relative
homological algebra in~\(\KKcat^\Z\)
is already studied in~\cite{Meyer-Nest:Homology_in_KK}.  The main
result is the following variant of the Pimsner--Voiculescu sequence
for \(\KK^\Z_*(A,B)\).
Let \(A\)
and~\(B\)
be \(\Cst\)\nb-algebras
with automorphisms \(\alpha\)
and~\(\beta\), respectively.  Then there is an exact sequence
\begin{equation}
  \label{eq:PV-sequence}
  \begin{tikzcd}[column sep=huge]
    \KK_1(A,B) \arrow[r] &
    \KK_0^\Z(A,B) \arrow[r, "\mathrm{forget}"] & 
    \KK_0(A,B) \arrow[d, "\alpha^*\beta_*^{-1}-1"] \\
    \KK_1(A,B) \arrow[u, "\alpha^*\beta_*^{-1}-1"] &
    \KK_1^\Z(A,B) \arrow[l, "\mathrm{forget}"] &
    \KK_0(A,B) \arrow[l]
  \end{tikzcd}
\end{equation}
(see \cite{Meyer-Nest:Homology_in_KK}*{Section~5.1}); here
\((\alpha^*\beta_*^{-1}-1)(x) = \beta^{-1}\circ x\circ\alpha - x\)
for all \(x\in \KK_*(A,B)\).
We may rewrite this as a pair of short exact sequences
\begin{multline*}
  \coker \bigl(\alpha^*\beta_*^{-1}-1\colon
  \KK_{1+i}(A,B) \to \KK_{1+i}(A,B)\bigr)
  \into \KK_i^\Z(A,B)
  \\ \prto
  \ker \bigl(\alpha^*\beta_*^{-1}-1\colon \KK_i(A,B) \to \KK_i(A,B)\bigr)
\end{multline*}
for \(i=0,1\).
We shall explain that the long exact sequence~\eqref{eq:PV-sequence}
is an instance of a Universal Coefficient Theorem as in
\cite{Meyer-Nest:Homology_in_KK}*{Theorem~66}.  In particular, the
cokernel and kernel in it are the Ext and Hom groups in a certain
exact category.  This is already proven
in~\cite{Meyer-Nest:Homology_in_KK}, but the consequences for
classification are not explored there.  We shall also treat other
examples by the same method later.

The forgetful functor
\[
R^\Z\colon \KKcat^\Z\to\KKcat,\qquad
(A,\alpha)\mapsto A,
\]
is triangulated.  So its kernel on morphisms
\[
\Ideal^\Z(A,B) \defeq \setgiven{f\in\KK_0^\Z(A,B)}{R^\Z(f)=0
  \text{ in }\KK_0(A,B)}
\]
is a stable homological ideal in~\(\KKcat^\Z\).
We now describe the universal \(\Ideal^\Z\)\nb-exact
functor.  The main point here is to describe its target category.  Let
\(\KKcat[\Z]\)
be the additive category of functors \(\Z\to\KKcat\)
with natural transformations as arrows.  Equivalently, an object
of~\(\KKcat[\Z]\)
is an object \(A\inOb\KKcat\)
with a group homomorphism \(a\colon \Z\to\KK_0(A,A)^\times\),
\(n\mapsto a_n\),
where \(\KK_0(A,A)^\times\)
denotes the multiplicative group of invertible elements in the ring
\(\KK_0(A,A)\).
And an arrow \((A,a) \to (B,b)\)
in \(\KKcat[\Z]\)
is an arrow \(f\in \KK_0(A,B)\)
that satisfies \(b_n\circ f = f\circ a_n\)
for all \(n\in\Z\).
The homomorphism~\(a\)
is determined by its value at \(1\in\Z\),
which may be any element \(a_1\in \KK_0(A,A)^\times\).
So we also denote objects of~\(\KKcat[\Z]\)
as \((A,a_1)\).
An element \(f\in \KK_0(A,B)\)
is an arrow \((A,a) \to (B,b)\)
in \(\KKcat[\Z]\)
if and only if \(b_1\circ f = f\circ a_1\)
or, equivalently, \(b_1^{-1}\circ f \circ a_1 - f=0\).  Thus
\[
\ker \bigl(\alpha^*\beta_*^{-1}-1\colon \KK_0(A,B) \to \KK_0(A,B)\bigr)
\cong \Hom_{\KKcat[\Z]}\bigl((A,[\alpha]),(B,[\beta])\bigr).
\]
Here we have implicitly used the functor
\[
F^\Z\colon \KKcat^\Z\to \KKcat[\Z],\qquad
(A,\alpha) \mapsto (A,[\alpha]),
\]
where \([\alpha]\in \KK_0(A,A)^\times\)
is the \(\KKcat\)\nb-class
of the automorphism~\(\alpha\).
It has the same kernel on morphisms~\(\Ideal^\Z\)
as the forgetful functor~\(R^\Z\).

We call a kernel--cokernel pair \(K \into E \prto Q\)
in \(\KKcat[\Z]\)
\emph{admissible} if it splits in~\(\KKcat\),
that is, \(E\cong K\oplus Q\)
as objects of~\(\KKcat\).
This turns \(\KKcat[\Z]\)
into an exact category (see \cite{Buehler:Exact}*{Exercise~5.3}).  We
use this exact structure to define projective objects and projective
resolutions in \(\KKcat[\Z]\).

Given \(A\inOb\KKcat\)
and a free Abelian group \(G= \Z[I]\)
on a countable set~\(I\), we define \(A\otimes G\inOb\KKcat\) by
\begin{equation}
  \label{def:tensor_with_Cstar}
  A \otimes G \defeq \bigoplus_{i\in I} A.
\end{equation}
In particular, \(A\otimes \Z \defeq A\).
This construction is an additive functor in~\(G\).
Namely, let \(G\)
and~\(H\)
be free Abelian groups and let \(f\colon G\to H\)
be a group homomorphism.  The Universal Coefficient Theorem implies
\(\KK_0(\C\otimes G,\C\otimes H) \cong \Hom(G,H)\).
So we get a functorial \(f_\C\in \KK_0(\C\otimes G,\C\otimes H)\).
Identifying \(A\otimes G = A \otimes (\C\otimes G)\)
(with the minimal tensor product of \(\Cst\)\nb-algebras),
we get a functorial
\[
f_A\defeq \id_A \otimes f_\C\in\KK_0(A\otimes G,A\otimes H).
\]

Let \(A\inOb\KKcat\).
Then \(A\otimes \Z[x,x^{-1}]\) with the invertible element
\[
x_A = \id_A \otimes x
\in \KK_0\bigl(A\otimes \Z[x,x^{-1}],A\otimes \Z[x,x^{-1}]\bigr)
\]
induced by multiplication with the invertible element \(x\in\Z[x,x^{-1}]\)
is an object of \(\KKcat[\Z]\).
It behaves like a free module over~\(A\) because
\[
\Hom_{\KKcat[\Z]}\bigl((A\otimes \Z[x,x^{-1}],x_A), (B,b) \bigr)
\cong \KK_0(A,B).
\]
In particular, \((A\otimes \Z[x,x^{-1}],x_A)\)
is projective in the exact category \(\KKcat[\Z]\).
The invertible element~\(x_A\)
in the \(\KK\)\nb-endomorphism
ring of \((A\otimes \Z[x,x^{-1}],x_A)\)
lifts to the shift automorphism
\[
\tau\in\Aut(\Cont_0(\Z,A)),\qquad
(\tau f)(n) \defeq f(n-1).
\]
That is,
\(F^\Z(\Cont_0(\Z,A),\tau) \cong (A\otimes \Z[x,x^{-1}],x_A)\).  And
\begin{equation}
  \label{eq:FZ_adjoint}
  \KK^\Z_0(\Cont_0(\Z,A), B)
  \cong \KK_0(A,B)
  \cong \Hom_{\KKcat[\Z]}\bigl((A\otimes \Z[x,x^{-1}],x_A),B\bigr).
\end{equation}
The first isomorphism here is \cite{Meyer-Nest:BC}*{Equation~(20)}.
It applies the forgetful functor
\[
\KK^\Z_0\bigl((\Cont_0(\Z,A),\tau), (B,\beta) \bigr) \to
\KK_0(\Cont_0(\Z,A),B)
\]
and then composes with the inclusion \Star{}homomorphism
\(A\to\Cont_0(\Z,A)\)
that maps~\(A\)
identically onto the summand at \(0\in\Z\).
Equation~\eqref{eq:FZ_adjoint} says that the partial adjoint of the
functor~\(F^\Z\)
is defined on \((A\otimes \Z[x,x^{-1}],x_A)\)
and maps it to \((\Cont_0(\Z,A),\tau)\).

\begin{lemma}
  \label{lem:PV-coker_is_Ext}
  Any object of~\(\KKcat[\Z]\) has a free resolution of length~\(1\).
\end{lemma}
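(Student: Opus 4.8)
The plan is to imitate, twisted by the given invertible element, the standard free resolution
\[
0 \to \Z[x,x^{-1}] \xrightarrow{\ x-1\ } \Z[x,x^{-1}] \to \Z \to 0
\]
of length one of the trivial module over the group ring \(\Z[x,x^{-1}]=\Z[\Z]\). Fix \((A,a_1)\inOb\KKcat[\Z]\) and let \(P\defeq(A\otimes\Z[x,x^{-1}],x_A)\), the free object on \(A\inOb\KKcat\). First I would take the free cover \(\pi\colon P\to(A,a_1)\) corresponding to \(\id_A\) under the natural isomorphism \(\Hom_{\KKcat[\Z]}(P,(A,a_1))\cong\KK_0(A,A)\); writing \(A\otimes\Z[x,x^{-1}]=\bigoplus_{n\in\Z}A\) with summand inclusions \(\iota_n\), the morphism \(\pi\) restricts to \(a_1^n\) on the \(n\)\nb-th summand, so \(\pi\circ\iota_0=\id_A\) and \(\pi\) is a split epimorphism in \(\KKcat\). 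Then I would put \(\delta\defeq x_A-(a_1\otimes\id_{\Z[x,x^{-1}]})\colon P\to P\), where \(a_1\otimes\id_{\Z[x,x^{-1}]}=\bigoplus_{n\in\Z}a_1\) applies \(a_1\) inside each summand. Since \(x_A\) and \(a_1\otimes\id_{\Z[x,x^{-1}]}\) are commuting \(\KKcat[\Z]\)\nb-endomorphisms of \(P\), so is \(\delta\), and \(\pi\circ\delta=0\) by a one\nb-line computation on summands. The goal is then to show that \(0\to P\xrightarrow{\delta}P\xrightarrow{\pi}(A,a_1)\to0\) is an admissible short exact sequence; since \(P\) is free, this is the asserted resolution.

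The heart of the argument is to split this sequence in \(\KKcat\). I would do this by exhibiting a retraction \(\rho\colon A\otimes\Z[x,x^{-1}]\to A\otimes\Z[x,x^{-1}]\) in \(\KKcat\) with \(\rho\circ\delta=\id\) and \(\delta\circ\rho+\iota_0\circ\pi=\id\); these identities say exactly that \((\delta,\iota_0)\) and \((\rho,\pi)\) are mutually inverse isomorphisms \(P\oplus A\cong P\) in \(\KKcat\), so the sequence splits there. The retraction is given by an explicit formula, triangular in the index \((j,n)\) and built from the powers of \(a_1\) and \(a_1^{-1}\): with \(\rho_{j,n}\in\KK_0(A,A)\) the component from the \(n\)\nb-th to the \(j\)\nb-th summand, set \(\rho_{j,n}=a_1^{\,n-1-j}\) for \(n\ge1\) and \(0\le j\le n-1\), set \(\rho_{j,n}=-a_1^{\,n-1-j}\) for \(n\le-1\) and \(n\le j\le-1\), and \(\rho_{j,n}=0\) otherwise; the two identities above then reduce to checking the telescoping recursion they impose. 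Once the sequence is known to split in \(\KKcat\), that it is a kernel--cokernel pair in \(\KKcat[\Z]\) follows formally: given \(f\colon(X,\xi)\to P\) in \(\KKcat[\Z]\) with \(\pi f=0\) one has \(f=\delta(\rho f)\) by \(\delta\rho+\iota_0\pi=\id\), the factorisation is unique because \(\rho\delta=\id\) makes \(\delta\) left\nb-cancellable, and \(\rho f\) is automatically a morphism in \(\KKcat[\Z]\) since \(\delta(x_A\cdot\rho f)=x_A f=f\xi=\delta(\rho f\cdot\xi)\); the cokernel property is dual.

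I expect the one genuinely delicate point to be the well-definedness of \(\rho\). A morphism out of the countable direct sum \(\bigoplus_{n\in\Z}A\) in \(\KKcat\) amounts to a family of morphisms out of the individual summands, and --- the target being again \(\bigoplus_{j\in\Z}A\) --- each of these must factor through a finite subsum, so the columns \((\rho_{j,n})_j\) of \(\rho\) must be finitely supported. The equation \(\rho\circ\delta=\id\) alone forces a recursion whose obvious solution is not column\nb-finite; it is the extra normalisation \(\rho\circ\iota_0=0\), which is part of the biproduct relation \(\delta\circ\rho+\iota_0\circ\pi=\id\), that truncates the recursion in each column and renders \(\rho\) legitimate. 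This is exactly where it matters that one only has to split in \(\KKcat\) --- over \(\Z[x,x^{-1}]\) the analogous sequence does not split, which is why there one needs resolutions of length two. The remaining checks --- that \(x_A\) commutes with \(a_1\otimes\id_{\Z[x,x^{-1}]}\), that \(\pi\) is a morphism in \(\KKcat[\Z]\), and \(\pi\circ\delta=0\) --- are routine.
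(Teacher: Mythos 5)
Your proof is correct, and your final short exact sequence is in fact isomorphic to the one the paper produces (the two boundary maps $x_A - a_1\otimes 1$ and $a_1^{-1}\otimes x - 1$ differ by the automorphism $a_1\otimes 1$ of $P$), but the route you take to establish admissibility is genuinely different. The paper keeps the twisted module structure $[\alpha]\otimes x$ on both copies of $A\otimes\Z[x,x^{-1}]$ so that the boundary maps are literally $\id_A\otimes(x-1)$ and $\id_A\otimes\ev_1$; then the $\KKcat$\nb-splitting is inherited for free from the splitting of the $\Z$\nb-module extension~\eqref{eq:ZZ_resolution} by additivity of $A\otimes\blank$, and only at the very end does the paper conjugate by $\bigoplus_n[\alpha^n]$ to untwist the module structure and make the two left terms visibly free. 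You instead start from the free object $P$ and build the $\KKcat$\nb-contracting homotopy $\rho$ by an explicit triangular matrix in the powers of $a_1$; the price is a hands-on verification of the two biproduct identities (which, as you correctly note, is where the column-finiteness of $\rho$ has to be checked, forced by the normalisation $\rho\iota_0=0$), and the payoff is a completely explicit splitting and no detour through the twisted object. One small point worth stating rather than sweeping into ``dual'': admissibility in this exact structure asks that $(\delta,\pi)$ be a kernel--cokernel pair in $\KKcat[\Z]$ \emph{and} split in $\KKcat$; you have argued the kernel property and the splitting, so you should at least remark that the cokernel property follows by the same one-line diagram chase with the other biproduct identity $\delta\rho+\iota_0\pi=\id$, using $\pi\iota_0=\id$ in place of $\rho\delta=\id$.
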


\begin{proof}
  The trivial representation of the group~\(\Z\)
  on~\(\Z\)
  corresponds to~\(\Z\)
  made a module over~\(\Z[x,x^{-1}]\)
  by letting \(x\in \Z[x,x^{-1}]\)
  act by~\(1\).
  This module has the following free resolution of length~\(1\):
  \begin{equation}
    \label{eq:ZZ_resolution}
    0\to \Z[x,x^{-1}] \xrightarrow{\mathrm{mult}(x-1)} \Z[x,x^{-1}]
    \xrightarrow{\ev_1} \Z \to 0,
  \end{equation}
  where \(\ev_1\colon \Z[x,x^{-1}] \to \Z\)
  is the homomorpism of evaluation at~\(1\),
  so \(\ev_1(x^n)=1\)
  for all \(n\in\Z\).
  The extension in~\eqref{eq:ZZ_resolution} splits as an extension
  of Abelian groups because~\(\Z\)
  is free as an Abelian group.

  The resolution~\eqref{eq:ZZ_resolution} induces a chain complex
  \begin{equation}
    \label{eq:AZ_resolution}
    0 \to (A \otimes \Z[x,x^{-1}],[\alpha]\otimes x)
    \to (A \otimes \Z[x,x^{-1}],[\alpha]\otimes x)
    \to (A,[\alpha])
    \to 0
  \end{equation}
  in the additive category~\(\KKcat[\Z]\).
  That is, the maps are arrows in~\(\KKcat[\Z]\).
  The extension~\eqref{eq:ZZ_resolution} splits by a group
  homomorphism, and the tensor product construction is additive.
  Hence~\eqref{eq:AZ_resolution} is exact, that is, it splits
  in~\(\KKcat\).  The arrow
  \[
  A \otimes \Z[x,x^{-1}]
  = \bigoplus_{n\in\Z} A
  \xrightarrow{\bigoplus_{n\in\Z} [\alpha^n]}
  \bigoplus_{n\in\Z} A
  = A \otimes \Z[x,x^{-1}]
  \]
  in~\(\KKcat\) is an isomorphism
  \[
  (A \otimes \Z[x,x^{-1}],\id_A\otimes x) \cong
  (A \otimes \Z[x,x^{-1}],[\alpha]\otimes x).
  \]
  Thus~\eqref{eq:AZ_resolution} is isomorphic to a free resolution
  \begin{equation}
    \label{eq:AZ_resolution_free}
    0 \to (A \otimes \Z[x,x^{-1}],1\otimes x)
    \to (A \otimes \Z[x,x^{-1}],1\otimes x)
    \to (A,[\alpha])
    \to 0,
  \end{equation}
  in~\(\KKcat[\Z]\),
  where the boundary map on \((A \otimes \Z[x,x^{-1}],1\otimes x)\)
  has changed to
  \([\alpha]^{-1} \otimes \mathrm{mult}(x) - \id_{A \otimes
    \Z[x,x^{-1}]}\).
\end{proof}

Lemma~\ref{lem:PV-coker_is_Ext} implies that the exact category
\(\KKcat[\Z]\)
has enough projective objects and that an object of~\(\KKcat[\Z]\)
is projective if and only if it is a direct summand of a free object.
Since the partial adjoint of the functor~\(F^\Z\)
is defined on all free objects, it is defined also on all projective
objects of \(\KKcat[\Z]\),
and it is inverse to~\(F^\Z\)
on this subcategory.  Idempotents in the category \(\KKcat^\Z\)
split because it is triangulated and has countable direct sums.

\begin{proposition}
  \label{pro:FZ_universal_exact}
  The functor \(F^\Z\colon \KKcat^\Z \to \KKcat[\Z]\)
  is the universal \(\Ideal^\Z\)\nb-exact
  stable homological functor from~\(\KKcat^\Z\) to an exact category.
\end{proposition}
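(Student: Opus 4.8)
The plan is to apply the analogue of Theorem~\ref{the:characterise_universal_homological} for homological functors into an exact category. As explained above, Quillen's Embedding Theorem embeds \(\KKcat[\Z]\) fully faithfully and fully exactly into an Abelian category, and this transports the notions of projective object, partial adjoint, and ``enough projectives''; so the hypotheses of Theorem~\ref{the:characterise_universal_homological} can be checked directly inside the exact category \(\KKcat[\Z]\). I would first record two preliminary facts: that \(F^\Z\) is a stable homological functor to the exact category \(\KKcat[\Z]\) with kernel \(\Ideal^\Z\) on morphisms, hence \(\Ideal^\Z\)\nb-exact; and that idempotents in \(\KKcat^\Z\) split, since \(\KKcat^\Z\) is triangulated and has countable direct sums. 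It then remains to verify conditions~(1)--(3) of Theorem~\ref{the:characterise_universal_homological} and---since the target is exact rather than Abelian---that \(\KKcat[\Z]\) is generated as an exact category by the range of \(F^\Z\), which is what pins down the target of the universal functor.

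Condition~(1) and the fact that \(\KKcat[\Z]\) is generated by the range of \(F^\Z\) I would obtain directly from Lemma~\ref{lem:PV-coker_is_Ext}: every object of \(\KKcat[\Z]\) is the cokernel term of an admissible extension whose two outer terms are free, so \(\KKcat[\Z]\) has enough projective objects, its projectives are precisely the direct summands of free objects, and the free objects---all of which have the form \((A\otimes\Z[x,x^{-1}],x_A)=F^\Z(\Cont_0(\Z,A),\tau)\)---generate \(\KKcat[\Z]\) as an exact category. For conditions~(2) and~(3) I would start from~\eqref{eq:FZ_adjoint}, which exhibits the partial left adjoint \((F^\Z)^\lad\) on each free object with value \((\Cont_0(\Z,A),\tau)\), together with the isomorphism \(F^\Z(\Cont_0(\Z,A),\tau)\cong(A\otimes\Z[x,x^{-1}],x_A)\), which says precisely that \(F^\Z\circ(F^\Z)^\lad\cong\id\) on free objects. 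A partial adjoint defined on a class of objects is automatically defined on all retracts of those objects once idempotents in \(\KKcat^\Z\) split: an idempotent on a free object~\(P\) corresponds, through the adjunction isomorphism and the Yoneda lemma, to an idempotent on \((F^\Z)^\lad(P)\), whose splitting provides the value of \((F^\Z)^\lad\) on the retract, and \(F^\Z\circ(F^\Z)^\lad\cong\id\) then restricts to the matching direct summand. Since every projective object of \(\KKcat[\Z]\) is a retract of a free one, this yields~(2) and~(3).

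Assembling these verifications, the exact-category form of Theorem~\ref{the:characterise_universal_homological} would give that \(\KKcat^\Z\) has enough \(\Ideal^\Z\)\nb-projective objects and that \(F^\Z\) is the universal \(\Ideal^\Z\)\nb-exact stable homological functor to an exact category. I expect the step requiring the most care to be the reduction underpinning that exact-category form: one must check that a fully faithful, fully exact Quillen embedding \(\KKcat[\Z]\hookrightarrow\Abel\) matches the projectives and the partial adjoint of \(F^\Z\) with the data appearing in Theorem~\ref{the:characterise_universal_homological}, and that the exact subcategory of~\(\Abel\) generated by the range of the resulting Abelian universal functor is exactly the image of~\(\KKcat[\Z]\) (cf.\ Remark~\ref{rem:universal_to_exact}). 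Granting this, which is essentially the content of the discussion preceding the proposition, everything else is the bookkeeping above.
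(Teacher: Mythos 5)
Your proof is correct and takes essentially the same route as the paper: verify the hypotheses of Theorem~\ref{the:characterise_universal_homological} via a Quillen embedding of \(\KKcat[\Z]\) into an Abelian category, using Lemma~\ref{lem:PV-coker_is_Ext}, equation~\eqref{eq:FZ_adjoint}, and idempotent splitting, then pin down the exact-category target via Remark~\ref{rem:universal_to_exact}. The paper identifies the target by citing surjectivity of \(F^\Z\) on objects (a forward reference to Theorem~\ref{the:Z_classify}), whereas you argue directly that the free objects in the image of \(F^\Z\) generate \(\KKcat[\Z]\) as an exact category; both observations rest on the same length-\(1\) free resolution from Lemma~\ref{lem:PV-coker_is_Ext}, so the mathematical content is the same.
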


\begin{proof}
  To prove this, we first embed \(\KKcat[\Z]\)
  into an Abelian category by Quillen's Embedding Theorem.  One way to
  do this is to embed~\(\KKcat\)
  into an Abelian category~\(\Abel\)
  by Freyd's Embedding Theorem and then form \(\Abel{}[\Z]\).
  The Abelian category \(\Abel{}[\Z]\)
  has enough projective objects, and all its projective objects
  already belong to \(\KKcat[\Z]\).
  So Theorem~\ref{the:characterise_universal_homological} applies and
  shows that the functor to~\(\Abel{}[\Z]\)
  is the universal \(\Ideal\)\nb-exact
  functor to an Abelian category.  The subcategory
  \(\KKcat[\Z] \subseteq \Abel{}[\Z]\)
  is exact and contains the range of the functor.  In fact, we shall
  show soon that the functor \(\KKcat^\Z \to \KKcat[\Z]\)
  is surjective on objects (see Theorem~\ref{the:Z_classify}).  Taking
  this for granted, Remark~\ref{rem:universal_to_exact} shows that the
  functor \(\KKcat^\Z \to \KKcat[\Z]\)
  is the universal \(\Ideal\)\nb-exact functor to an exact category.
\end{proof}

Equip \(\Cont_0(\Z,A)\)
with the \(\Z\)\nb-action
generated by the shift automorphism~\(\tau\).
We use~\eqref{eq:ZZ_resolution} to lift the
resolution~\eqref{eq:AZ_resolution_free} in \(\KKcat[\Z]\)
to the following \(\Ideal^\Z\)\nb-projective
resolution of length~\(1\) in~\(\KKcat^\Z\):
\begin{equation}
  \label{eq:lift_AZ_resolution}
  0 \to \Cont_0(\Z) \otimes A
  \xrightarrow{\varphi} \Cont_0(\Z) \otimes A
  \xrightarrow{p} (A,[\alpha])
  \to 0;
\end{equation}
here \(\varphi = [\tau]\otimes [\alpha]^{-1} - 1\),
and~\(p\)
is the counit of the adjunction~\eqref{eq:FZ_adjoint}; that is, the
first isomorphism in~\eqref{eq:FZ_adjoint} maps~\(p\)
to the identity element in \(\KK_0(A,A)\).
In other words, when we forget the \(\Z\)\nb-actions,
then~\(p\)
restricts to the identity map at the \(0\)th
summand in \(\Cont_0(\Z) \otimes A = \bigoplus_{n\in\Z} A\).
When we use the resolution~\eqref{eq:lift_AZ_resolution} to compute
derived functors, we get
\[
\Ext^1_{\KKcat[\Z],{\Ideal^\Z}}\bigl((A,[\alpha]),(B,[\beta])\bigr)
\cong
\coker \bigl((\alpha^*)^{-1}\beta_*-1\colon \KK_0(A,B) \to \KK_0(A,B)\bigr).
\]
The map \((\alpha^*)^{-1}\beta_*-1\),
has the same cokernel as \(\alpha^*\beta_*^{-1}-1\).

A rather deep result says that the \(\Ideal^\Z\)\nb-projective
objects generate \(\KKcat^\Z\).
Equivalently, if \(R^\Z(A) \cong 0\)
in \(\KKcat\),
then already \(A\cong 0\)
in \(\KKcat^\Z\).
This is related to the proof of the Baum--Connes conjecture for the
group~\(\Z\).
It follows immediately from the Pimsner--Voiculescu
sequence~\eqref{eq:PV-sequence}.

\begin{theorem}
  \label{the:Z_classify}
  Let \((A,\alpha)\)
  and \((B,\beta)\)
  be objects of \(\KKcat^\Z\).
  Any isomorphism \(F^\Z(A) \cong F^\Z(B)\)
  in \(\KKcat[\Z]\)
  lifts to an isomorphism \(A\cong B\)
  in \(\KKcat^\Z\).
  And any object of \(\KKcat[\Z]\)
  is isomorphic to \(F^\Z(A)\)
  for some \(A\inOb\KKcat^\Z\), which is unique up to isomorphism.

  If \(t\in \KK_0(A,B)^\times\)
  is a \(\KK\)\nb-equivalence
  with \(t\circ [\alpha] = [\beta]\circ t\),
  then there is an isomorphism in \(\KK^\Z_0(A,B)\)
  that is mapped to~\(t\)
  by the forgetful functor \(\KKcat^\Z\to\KKcat\).
\end{theorem}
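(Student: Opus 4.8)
The plan is to derive all three assertions from the relative homological machinery set up above; the one substantial input is that \emph{every} object of $\KKcat^\Z$ has an $\Ideal^\Z$\nb-projective resolution of length one, namely the Pimsner--Voiculescu resolution~\eqref{eq:lift_AZ_resolution}. (This already subsumes the ``rather deep'' fact that $\KKcat^\Z=\gen{\Proj_{\Ideal^\Z}}$, since the third term of an exact triangle whose other two terms are $\Ideal^\Z$\nb-projective lies in that thick subcategory.) Because of this, \cite{Meyer-Nest:Homology_in_KK}*{Theorem~66} gives, for all $C,D\inOb\KKcat^\Z$, a short exact sequence
\[
0 \to \Ext^1_{\KKcat[\Z]}\bigl(\Sigma F^\Z(C),F^\Z(D)\bigr)
\to \KK^\Z_0(C,D) \xrightarrow{F^\Z}
\Hom_{\KKcat[\Z]}\bigl(F^\Z(C),F^\Z(D)\bigr) \to 0 ,
\]
natural in~$C$ and~$D$. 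For the first assertion I would argue as follows. Given an isomorphism $\phi\colon F^\Z(A)\congto F^\Z(B)$ in~$\KKcat[\Z]$, surjectivity on the right yields a lift $f\in\KK^\Z_0(A,B)$ with $F^\Z(f)=\phi$. For each $C\inOb\KKcat^\Z$ the induced map $f^*\colon\KK^\Z_0(B,C)\to\KK^\Z_0(A,C)$ fits into a morphism between the corresponding short exact sequences whose outer vertical maps are $(\Sigma\phi)^*$ and $\phi^*$, both isomorphisms; by the five lemma $f^*$ is an isomorphism for every~$C$, so $f$ is an isomorphism in~$\KKcat^\Z$ by the Yoneda lemma. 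Thus $\phi$ lifts to an isomorphism.

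For the second assertion (essential surjectivity on objects; the uniqueness clause is then the first assertion) I would lift a presentation. Fix $(A,a_1)\inOb\KKcat[\Z]$; by Lemma~\ref{lem:PV-coker_is_Ext} it has a free resolution $0\to P\xrightarrow{\psi}P\to(A,a_1)\to 0$ with $P=(A\otimes\Z[x,x^{-1}],\id_A\otimes x)$. Set $Q\defeq(\Cont_0(\Z,A),\tau)\inOb\KKcat^\Z$; by~\eqref{eq:FZ_adjoint} this is the value on~$P$ of the partial adjoint of~$F^\Z$, and $F^\Z(Q)\cong P$, with $F^\Z$ inverse to that partial adjoint on the free object~$P$. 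Hence $\psi$ lifts to a morphism $\tilde\psi\in\KK^\Z_0(Q,Q)$ with $F^\Z(\tilde\psi)=\psi$. Let $A'$ be the third term of an exact triangle $Q\xrightarrow{\tilde\psi}Q\to A'\to\Sigma Q$ in~$\KKcat^\Z$. Post-composing $F^\Z$ with Quillen's embedding of~$\KKcat[\Z]$ into the Abelian category~$\Abel{}[\Z]$, as in the proof of Proposition~\ref{pro:FZ_universal_exact}, turns this triangle into a genuine long exact sequence; since $F^\Z(\tilde\psi)=\psi$ and~$\Sigma\psi$ are monomorphisms there, it collapses to $0\to P\xrightarrow{\psi}P\to F^\Z(A')\to 0$, so $F^\Z(A')\cong\coker\psi=(A,a_1)$.

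The third assertion is then formal. If $t\in\KK_0(A,B)^\times$ satisfies $t\circ[\alpha]=[\beta]\circ t$, composing with~$t^{-1}$ on both sides gives $[\alpha]\circ t^{-1}=t^{-1}\circ[\beta]$, so $t$ and~$t^{-1}$ are mutually inverse morphisms of~$\KKcat[\Z]$; that is, $t$ is an isomorphism $F^\Z(A)\congto F^\Z(B)$. By the first assertion it lifts to an isomorphism $f\in\KK^\Z_0(A,B)$ with $F^\Z(f)=t$. Since the forgetful functor $\KKcat^\Z\to\KKcat$ factors as $F^\Z$ followed by the functor forgetting the invertible element, it sends~$f$ to its underlying element of $\KK_0(A,B)$, which is~$t$.

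I expect the crux to be the first assertion, specifically the point that a lift of an isomorphism is automatically invertible; the argument above routes this through the naturality of the universal-coefficient sequence, which is exactly where the universality of~$F^\Z$ (Proposition~\ref{pro:FZ_universal_exact}) and the length-one resolution~\eqref{eq:lift_AZ_resolution} enter, and the availability of that resolution for \emph{every} object of~$\KKcat^\Z$ is itself the nontrivial Pimsner--Voiculescu input. The remaining care is the routine one of replacing the exact category~$\KKcat[\Z]$ by an Abelian category in the second assertion so that the exact triangle produces an honest long exact sequence.
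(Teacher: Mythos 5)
Your proposal is correct, and the overall strategy — run everything through the Universal Coefficient Theorem for $\KK^\Z_0$, using the length-one $\Ideal^\Z$\nb-projective resolution~\eqref{eq:lift_AZ_resolution} that is available for \emph{every} object — is the same as the paper's. The one step where you and the paper part ways is the argument that a lift of an isomorphism of $\KKcat[\Z]$ is itself invertible. The paper's proof observes that the $\Ext^1$\nb-part of the UCT sequence is a square-zero ideal under Kasparov product: a lift $\hat s$ of $t^{-1}$ then makes $\hat s\hat t-\id$ and $\hat t\hat s-\id$ nilpotent, so $\hat t$ is invertible. You instead feed the naturality of the UCT sequence in the contravariant variable into the short five lemma to get that $f^*$ is an isomorphism on all $\KK^\Z_0(\,\cdot\,,C)$, then invoke Yoneda. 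Both arguments are standard, rest on the same two inputs (naturality of the UCT, length-one resolutions), and are of comparable length; this is a genuine but minor divergence, with the paper's nilpotence argument being slightly more direct and yours perhaps more transparent for readers not used to the square-zero trick. For essential surjectivity on objects, the paper simply cites \cite{Bentmann-Meyer:More_general}*{Proposition~2.3}; you reprove it in this special case by lifting a free resolution through the partial adjoint and taking a cone, which is exactly the content of that proposition, so this is an unpacking rather than a new route. Your treatment of the third paragraph as a formal corollary of the first matches the paper's remark that it ``only describes isomorphisms in $\KKcat[\Z]$ more concretely.''
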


\begin{proof}
  The Universal Coefficient Theorem for \(\KK^\Z_0(A,B)\)
  and the ideal~\(\Ideal^\Z\)
  allows to lift any map \(t\colon (A,[\alpha])\to (B,[\beta])\)
  in~\(\KKcat[\Z]\)
  to an element
  \(\hat{t}\in \KK^\Z_0\bigl((A,\alpha), (B,\beta)\bigr)\).
  The naturality of the Universal Coefficient Theorem implies that the
  composition vanishes for two elements of the \(\Ext^1\)-part
  of \(\KK^\Z_0(A,B)\).
  Hence~\(\hat{t}\)
  is invertible if~\(t\)
  is invertible.  Any object of \(\KKcat[\Z]\)
  has a projective resolution of length~\(1\).
  This allows to lift it to an object of~\(\KKcat^\Z\)
  (see \cite{Bentmann-Meyer:More_general}*{Proposition~2.3}).  This
  proves both assertions in the first paragraph.  The second paragraph
  only describes isomorphisms in \(\KKcat[\Z]\) more concretely.
\end{proof}

We make the isomorphism criterion in Theorem~\ref{the:Z_classify} more
explicit under the assumption that the \(\Cst\)\nb-algebras
\(A\)
and~\(B\)
belong to the bootstrap class.  Let~\(A^\pm\)
be \(\Cst\)\nb-algebras in the bootstrap class with
\[
\K_0(A^+) = \K_0(A),\quad
\K_1(A^+) = 0,\qquad
\K_0(A^-) = 0,\quad
\K_1(A^-) = \K_1(A).
\]
Then \(\K_*(A^+ \oplus A^-) \cong \K_*(A)\).
So \(A\cong A^+ \oplus A^-\)
by the Universal Coefficient Theorem for \(\KK\).
We use such a \(\KK\)\nb-equivalence
to map~\(\alpha\)
to an element of \(\KK_0(A^+\oplus A^-, A^+\oplus A^-)\).
We rewrite this as a \(2\times2\)-matrix
\[
\begin{pmatrix}
  \alpha^{++}& \alpha^{+-}\\
  \alpha^{-+}& \alpha^{--}
\end{pmatrix},\qquad
\begin{aligned}
  \alpha^{++} &\in\KK_0(A^+,A^+) \cong \Hom\bigl(\K_0(A),\K_0(A)\bigr),\\
  \alpha^{+-} &\in\KK_0(A^-,A^+) \cong \Ext\bigl(\K_1(A),\K_0(A)\bigr),\\
  \alpha^{-+} &\in\KK_0(A^+,A^-) \cong \Ext\bigl(\K_0(A),\K_1(A)\bigr),\\
  \alpha^{--} &\in\KK_0(A^-,A^-) \cong \Hom\bigl(\K_1(A),\K_1(A)\bigr).
\end{aligned}
\]
Here we have used the Universal Coefficient Theorem for the \(\Cst\)\nb-algebras
\(A^\pm\).
A similar decomposition \(B\cong B^+ \oplus B^-\)
allows us to map~\(\beta\)
to an element of \(\KK_0(B^+\oplus B^-, B^+\oplus B^-)\),
which we then rewrite as a \(2\times2\)-matrix
\[
\begin{pmatrix}
  \beta^{++}& \beta^{+-}\\
  \beta^{-+}& \beta^{--}
\end{pmatrix},\qquad
\begin{aligned}
  \beta^{++} &\in\KK_0(B^+,B^+) \cong \Hom\bigl(\K_0(B),\K_0(B)\bigr),\\
  \beta^{+-} &\in\KK_0(B^-,B^+) \cong \Ext\bigl(\K_1(B),\K_0(B)\bigr),\\
  \beta^{-+} &\in\KK_0(B^+,B^-) \cong \Ext\bigl(\K_0(B),\K_1(B)\bigr),\\
  \beta^{--} &\in\KK_0(B^-,B^-) \cong \Hom\bigl(\K_1(B),\K_1(B)\bigr).
\end{aligned}
\]
And we may also transfer an element \(t\in \KK_0(A,B)\)
to such a \(2\times2\)-matrix
\[
\begin{pmatrix}
  t^{++}& t^{+-}\\
  t^{-+}& t^{--}
\end{pmatrix},\qquad
\begin{aligned}
  t^{++} &\in\KK_0(A^+,B^+) \cong \Hom\bigl(\K_0(A),\K_0(B)\bigr),\\
  t^{+-} &\in\KK_0(A^-,B^+) \cong \Ext\bigl(\K_1(A),\K_0(B)\bigr),\\
  t^{-+} &\in\KK_0(A^+,B^-) \cong \Ext\bigl(\K_0(A),\K_1(B)\bigr),\\
  t^{--} &\in\KK_0(A^-,B^-) \cong \Hom\bigl(\K_1(A),\K_1(B)\bigr).
\end{aligned}
\]
The naturality of the exact sequence in the Universal Coefficient Theorem implies that the
Kasparov product of an element of \(\Ext\bigl(\K_{1+*}(A),\K_*(B)\bigr)\)
with an element of \(\KK_0(B,C)\)
depends only on the image of the latter in \(\Hom\bigl(\K_*(B),\K_*(C)\bigr)\).
Thus the product of two \(\Ext\)-terms
always vanishes.  Hence the condition \(t [\alpha]= [\beta] t\)
in Theorem~\ref{the:Z_classify} is equivalent to four equations
\begin{alignat*}{2}
  t^{++} \alpha^{++} &= \beta^{++} t^{++},&\qquad
  t^{+-} \alpha^{--} +   t^{++} \alpha^{+-}
  &= \beta^{+-} t^{--} +  \beta^{++} t^{+-}, \\
  t^{--} \alpha^{--} &=  \beta^{--} t^{--},&\qquad
  t^{-+} \alpha^{++} +   t^{--} \alpha^{-+}
  &=  \beta^{-+} t^{++} + \beta^{--} t^{-+}.
\end{alignat*}
The equations on the left assert that the two diagonal entries of
\(t [\alpha]\)
and~\([\beta] t\)
are equal; those on the right assert equality of the two off-diagonal
entries.  Theorem~\ref{the:Z_classify} says that \((A,\alpha)\)
and~\((B,\beta)\)
are \(\KK^\Z\)\nb-equivalent
if and only if there is an invertible element~\(t\)
for which these equations hold.  The equations on the left mean that
the isomorphism \(t_* = (t^{++}, t^{--}) \in \Hom\bigl(\K_*(A),\K_*(B)\bigr)\)
induced by~\(t\)
intertwines the automorphisms \(\alpha_*\in \Aut(\K_*(A))\)
and \(\beta_*\in \Aut(\K_*(B))\).
Equivalently, \(t_*\)
is an isomorphism of \(\Z/2\)\nb-graded
\(\Z[x,x^{-1}]\)-modules.
The equations on the right mean that there is
\(t^1 \defeq (t^{+-},t^{-+}) \in \Ext\bigl(\K_{1+*}(A),\K_*(B)\bigr)\) with
\[
t^1 \alpha_* - \beta_* t^1 = \beta^1 t_* - t_* \alpha^1,
\]
where we abbreviate \(\alpha_* = (\alpha^{++},\alpha^{--})\)
and \(\alpha^1 = (\alpha^{+-},\alpha^{-+})\),
and similarly for~\(\beta\).
The choice of~\(t^1\)
has no effect on the invertibility of~\(t\).
So the criterion in Theorem~\ref{the:Z_classify} is whether the image
of \(\beta^1 t_* - t_* \alpha^1\) vanishes in the cokernel of
\begin{equation}
  \label{eq:cokernel_Ext_Z}
  \Ext\bigl(\K_{1+*}(A),\K_*(B)\bigr) \to \Ext\bigl(\K_{1+*}(A),\K_*(B)\bigr),\qquad
  t^1 \mapsto t^1 \alpha_* - \beta_* t^1.
\end{equation}
We shall later identify this cokernel with the group
\(\Ext^2_{\Z[x,x^{-1}]}\bigl(\K_{1+*}(A),\K_*(B)\bigr)\)
and show that the image of \(\beta^1 t_* - t_* \alpha^1\)
in this cokernel is the relative obstruction class for \((A,\alpha)\)
and \((B,\beta)\)
and a \(\Z[x,x^{-1}]\)-module
isomorphisms \(t_*\colon \K_*(A) \cong \K_*(B)\).
This gives the rule to translate between the classifying invariants in
Theorem~\ref{the:Z_classify} and
in~\cite{Bentmann-Meyer:More_general}.

\begin{remark}
  If \(A=B\)
  is a unital Kirchberg algebra (separable, nuclear, unital, purely infinite
  and simple), then a much finer classification theorem for
  automorphisms is proved by Nakamura~\cite{Nakamura:Aperiodic}.
\end{remark}

\subsection{\texorpdfstring{$\Cst$}{C*}-Algebras over unique path spaces}
\label{sec:UCT_X}

Now we prove a Universal Coefficient Theorem for \(\KKcat^X\)
for a unique path space~\(X\).
Let~\(X\)
be a countable set and let~\(\to\)
be a relation on~\(X\),
which says for which points \(x,y\in X\)
there is an edge \(x\to y\).
Equip~\(X\)
with the partial order generated by~\(\leftarrow\),
that is, \(x \preceq y\)
if and only if there is a chain of edges
\(x=x_0 \leftarrow x_1 \leftarrow \dotsb \leftarrow x_\ell = y\)
with some \(\ell\ge0\)
and \(x_1,\dotsc,x_{\ell-1}\in X\).
Equip~\(X\)
with the Alexandrov topology generated by this partial order.  We
assume~\((X,\to)\)
to be a \emph{unique path space}, that is, there is at most one chain
of edges
\(x=x_0 \leftarrow x_1 \leftarrow \dotsb \leftarrow x_\ell = y\)
between any two points \(x,y\in X\).

For \(x\in X\),
the subset \(U_x\defeq \setgiven{y\in X}{x\preceq y}\)
is the minimal open subset containing~\(x\).
A \emph{\(\Cst\)\nb-algebra
  over~\(X\)}
is equivalent to a \(\Cst\)\nb-algebra~\(A\)
with fixed ideals \(A(U_x)\idealin A\)
for all \(x\in X\),
such that \(A(U_x) \subseteq A(U_y)\)
for all \(x,y\in X\)
with \(x\to y\)
or, equivalently, with \(U_x \subseteq U_y\).
The equivariant Kasparov category~\(\KKcat^X\)
for \(\Cst\)\nb-algebras
over~\(X\)
has separable \(\Cst\)\nb-algebras
over~\(X\)
as objects and the KK-groups \(\KK_0^X(A,B)\)
as arrows (see also~\cite{Meyer-Nest:Bootstrap}, where this category
is denoted \(\mathfrak{KK}(X)\)).  The forgetful functor
\[
R^X\colon \KKcat^X \to \prod_{x\in X} \KKcat,\qquad
A\mapsto (A(U_x))_{x\in X},
\]
is a triangulated functor between triangulated categories.  Its kernel on
morphisms
\[
\Ideal^X(A,B) \defeq \setgiven{f\in \KK^X_0(A,B)}
{f(U_x)=0 \text{ in } \KK_0(A(U_x),B(U_x)) \text{ for all } x\in X}
\]
is a stable homological ideal.  We now describe the universal
\(\Ideal^X\)\nb-exact
stable homological functor as in Section~\ref{sec:UCT_Z}.

Let \(\KKcat[X]\)
be the category of functors \((X,\succeq) \to \KKcat\)
with natural transformations as arrows.  Since~\(X\) is a unique
path space, the category associated to the partially ordered set
\((X,\succeq)\)
is the path category of the directed graph \((X,\to)\).
By the universal property of the path category, an object of
\(\KKcat[X]\)
is given by \(A_x\inOb\KKcat\)
for \(x\in X\)
and \(\alpha_{y,x}\in \KK_0(A_x,A_y)\)
for \(x,y\in X\)
with \(x\to y\),
without any relations on the~\(\alpha_{y,x}\).
This uniquely determines \(\KK\)-classes
\(\alpha_{y,x}\in \KK_0(A_x,A_y)\)
for \(x,y\in X\)
with \(x\succeq y\)
such that \(\alpha_{x,x} = \id_{A_x}\)
and \(\alpha_{z,y}\circ\alpha_{y,x}=\alpha_{z,x}\)
for all \(x,y,z\in X\)
with \(x\succeq y\succeq z\).
An arrow \((A_x,\alpha_{y,x})\to (B_x,\beta_{y,x})\)
is a family of arrows \(f_x\in \KK_0(A_x,B_x)\)
for \(x\in X\)
with \(f_y \alpha_{y,x} = \beta_{y,x} f_x\)
in \(\KK_0(A_x,B_y)\)
for all \(x,y\in X\)
with \(x\to y\);
then \(f_y \alpha_{y,x} = \beta_{y,x} f_x\)
holds for all \(x,y\in X\) with \(x\succeq y\).  Define
\[
F^X\colon \KKcat^X\to \KKcat[X]
\]
by mapping a \(\Cst\)\nb-algebra~\(A\)
over~\(X\)
to the object of \(\KKcat[X]\)
where \(A_x \defeq A(U_x)\)
and where \(\alpha_{y,x}\in\KK_0(A_x,A_y)\)
for \(x,y\in X\)
with \(x\to y\)
is the \(\KK\)-class
of the inclusion map \(A(U_x) \hookrightarrow A(U_y)\).
Then \(\alpha_{y,x}\in\KK_0(A_x,A_y)\)
for \(x,y\in X\)
with \(x\succeq y\)
is the \(\KK\)-class
of the inclusion map as well.  A kernel--cokernel pair \(K\to E\to Q\)
in \(\KKcat[X]\)
is called \emph{admissible} if it splits pointwise, that is,
\(K_x \to E_x \to Q_x\)
is a split extension in \(\KKcat\)
for all \(x\in X\);
so \(E_x \cong K_x \oplus Q_x\)
in \(\KKcat\)
for all \(x\in X\),
but the sections \(Q_x \to E_x\)
are not compatible with the structure maps \(E_x \to E_y\)
and \(Q_x \to Q_y\)
for \(x\to y\).
This turns~\(\KKcat[X]\)
into an exact category (see \cite{Buehler:Exact}*{Exercise~5.3}).

Let \(z\in X\)
and \(A\inOb\KKcat\).
As in~\cite{Meyer-Nest:Bootstrap}, let \(i_z(A)\inOb\KKcat^X\)
be the \(\Cst\)\nb-algebra~\(A\)
with
\[
i_z(A)(U_x) \defeq
\begin{cases}
  A&\text{if }z\in U_x\text{, that is, }x\preceq z,\\
  0&\text{otherwise.}
\end{cases}
\]
Then
\begin{equation}
  \label{eq:KK_out_ix}
  \KK^X_0(i_z(A),B) \cong \KK_0\bigl(A,B(U_z)\bigr)
\end{equation}
for all \(B\inOb\KKcat^X\)
by \cite{Meyer-Nest:Bootstrap}*{Proposition 3.13}.  This isomorphism
applies the restriction map
\(\KK^X_0(i_z(A),B) \to \KK_0(i_z(A)(U_z),B(U_z))\)
and then identifies \(i_z(A)(U_z) = A\).  The object
\[
j_z(A) \defeq F^X(i_z(A)) \inOb\KKcat[X]
\]
has \(j_z(A)_x = A\)
for \(x\preceq z\)
and \(j_z(A)_x = 0\)
otherwise,
and the map \(j_z(A)_x \to j_z(A)_y\)
for \(x\succeq y\)
is the identity map in \(\KK_0(A,A)\)
if \(z\succeq x\)
and the zero map in \(\KK_0(0,j_z(A)_y)\) otherwise.  We compute
\begin{equation}
  \label{eq:KKX_out_Fix}
  \Hom_{\KKcat[X]}\bigl(j_z(A),(B_x,\beta_{y,x})\bigr) \cong \KK_0(A,B_z).  
\end{equation}
Equations \eqref{eq:KK_out_ix} and~\eqref{eq:KKX_out_Fix} imply
\begin{equation}
  \label{eq:KKX_out_Fix2}
  \KK^X_0(i_z(A),B)
  \cong \KK_0(A,B(U_z))
  \cong \Hom_{\KKcat[X]}\bigl(j_z(A),F^X(B)\bigr)
\end{equation}
for all \(B\inOb\KKcat^X\).

\begin{theorem}
  \label{the:projectives_ups}
  Let \((X,\to)\)
  be a countable unique path space.  The objects of the exact
  category~\(\KKcat[X]\)
  of the form \(\bigoplus_{z\in X} j_z(A_z)\)
  for \(A_z\inOb\KKcat\)
  for \(z\in X\)
  are projective.  For any object \(A = (A_x,\alpha_{x,y})\)
  of \(\KKcat[X]\),
  there is an admissible extension
  \begin{equation}
    \label{eq:projective_resolution_ups}
    \bigoplus_{x\to y} j_y(A_x)
    \overset{\iota}\into \bigoplus_{x\in X} j_x(A_x)
    \overset{\pi}\prto A,
  \end{equation}
  which is a projective resolution of length~\(1\).
  The exact category~\(\KKcat[X]\)
  has enough projective objects, and the partial adjoint of~\(F^X\)
  is defined on all projective objects of~\(\KKcat[X]\)
  and is a section for~\(F^X\)
  there.  The functor \(F^X\colon \KKcat^X \to \KKcat[X]\)
  is the universal \(\Ideal^X\)\nb-exact
  stable homological functor to an exact category.
\end{theorem}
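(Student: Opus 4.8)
The plan is to mimic Lemma~\ref{lem:PV-coker_is_Ext} and Proposition~\ref{pro:FZ_universal_exact}, with the group ring \(\Z[x,x^{-1}]\) replaced by the path category of~\((X,\to)\): for each \(w\in X\) the role of the free resolution~\eqref{eq:ZZ_resolution} of the trivial module will be played by the contractible chain complex of the rooted tree~\(U_w\). First I verify the projectivity of \(\bigoplus_{z\in X}j_z(A_z)\). By~\eqref{eq:KKX_out_Fix}, \(\Hom_{\KKcat[X]}\bigl(j_z(A),\blank\bigr)\) is the composite of evaluation at~\(z\) with the additive functor \(\KK_0(A,\blank)\). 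An admissible extension in \(\KKcat[X]\) restricts at~\(z\) to a split extension in~\(\KKcat\), which \(\KK_0(A,\blank)\) carries to a short exact sequence of Abelian groups; hence \(j_z(A)\) is projective. As \(\Hom\) out of a countable coproduct is the corresponding product and a product of surjections of Abelian groups is surjective, \(\bigoplus_{z\in X}j_z(A_z)\) is projective as well; it lies in~\(\KKcat[X]\) because \(\KKcat\) has countable coproducts and~\(X\) is countable.

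Next comes the resolution~\eqref{eq:projective_resolution_ups}. Let~\(\pi\) restrict on the summand \(j_x(A_x)\) to the morphism corresponding to~\(\id_{A_x}\) under~\eqref{eq:KKX_out_Fix}; concretely, at the point~\(w\) it is the map \(\bigoplus_{x\succeq w}A_x\to A_w\) whose \(A_x\)\nb-component is~\(\alpha_{w,x}\), split surjective because its \(A_w\)\nb-component is~\(\id_{A_w}\). Let~\(\iota\) restrict on the summand \(j_y(A_x)\) attached to an edge \(x\to y\) to the difference of the canonical inclusion \(j_y(A_x)\hookrightarrow j_x(A_x)\) and the morphism \(j_y(\alpha_{y,x})\colon j_y(A_x)\to j_y(A_y)\), followed by the coproduct inclusions. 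Both~\(\pi\) and~\(\iota\) are morphisms of~\(\KKcat[X]\) since they are built from adjunction (co)units and from~\(j_y\) applied to morphisms, and \(\pi\circ\iota=0\) by functoriality of the~\(\alpha_{y,x}\). It remains to check that~\eqref{eq:projective_resolution_ups} splits at each point~\(w\). This is where the unique path space hypothesis enters: each \(x\succ w\) admits exactly one edge \(x\to y\) with \(y\succeq w\), the first edge of the unique directed path from~\(x\) to~\(w\), so \(\bigl(\bigoplus_{x\to y}j_y(A_x)\bigr)_w\) is identified with \(\bigoplus_{x\succ w}A_x\) and the complex at~\(w\) becomes
\[
  0 \to \bigoplus_{x\succ w} A_x \xrightarrow{\ \iota_w\ }
  \bigoplus_{x\succeq w} A_x \xrightarrow{\ \pi_w\ } A_w \to 0 ,
\]
where \(\iota_w\) sends the summand~\(A_x\) to \(\id_{A_x}\) in position~\(x\) minus \(\alpha_{y(x),x}\) in position~\(y(x)\), with \(y(x)\succeq w\) the parent of~\(x\) in the tree~\(U_w\). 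This is the contractible chain complex of that tree: the inclusion of~\(A_w\) in position~\(w\) splits~\(\pi_w\), and a contracting homotopy sends the summand~\(A_z\) (for \(z\succ w\), with directed path \(z=v_0\to v_1\to\dotsb\to v_m=w\)) to \(\sum_{i=0}^{m-1}\alpha_{v_i,z}\) in position~\(v_i\), the identity \(\alpha_{v_{i+1},v_i}\alpha_{v_i,z}=\alpha_{v_{i+1},z}\) making the relevant sums telescope (the routine check I leave for the write-up). So~\eqref{eq:projective_resolution_ups} is an admissible extension with projective outer terms, hence a projective resolution of length~\(1\), and~\(\pi\) exhibits enough projectives.

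For the partial adjoint, equation~\eqref{eq:KKX_out_Fix2} shows that \(F^{X\lad}\) is defined on each \(j_z(A)\) with \(F^{X\lad}\bigl(j_z(A)\bigr)=i_z(A)\) and \(F^X F^{X\lad}\bigl(j_z(A)\bigr)=j_z(A)\); every projective object of~\(\KKcat[X]\) is a retract of some \(\bigoplus_z j_z(A_z)\) (by splitting~\(\pi\)), and partial left adjoints extend over countable coproducts and over retracts (idempotents split in the triangulated category~\(\KKcat^X\) with countable sums), so \(F^{X\lad}\) is defined on all projectives of~\(\KKcat[X]\) and is a section of~\(F^X\) there.

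The universality then follows exactly as in Proposition~\ref{pro:FZ_universal_exact}: embed~\(\KKcat\) into an Abelian category~\(\Abel\) by Freyd's Embedding Theorem and form \(\Abel{}[X]\defeq[(X,\succeq),\Abel]\supseteq\KKcat[X]\), which has enough projectives, all of them direct sums of functors \(j_z(P)\) with~\(P\) projective in~\(\Abel\), i.e.\ with \(P\inOb\KKcat\), hence objects of~\(\KKcat[X]\). By the previous paragraphs, Theorem~\ref{the:characterise_universal_homological} applies to the composite \(\KKcat^X\xrightarrow{F^X}\KKcat[X]\hookrightarrow\Abel{}[X]\), so it is the universal \(\Ideal^X\)\nb-exact stable homological functor to an Abelian category. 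An extension in~\(\Abel{}[X]\) of functors with values among the projectives of~\(\Abel\) splits at every point, so \(\KKcat[X]\) is an exact subcategory of~\(\Abel{}[X]\), and by~\eqref{eq:projective_resolution_ups} it is the one generated by the range of~\(F^X\); hence \(F^X\colon\KKcat^X\to\KKcat[X]\) is the universal \(\Ideal^X\)\nb-exact stable homological functor to an exact category, as in the discussion preceding Remark~\ref{rem:universal_to_exact}. The one step requiring genuine work is the pointwise exactness in the second paragraph, where the tree structure of~\(U_w\) — and hence the unique path space hypothesis — is essential; everything else is formal adjunction and embedding bookkeeping parallel to the case of~\(\KKcat^\Z\).
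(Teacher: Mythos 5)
Your proof is correct and follows essentially the same route as the paper's. You construct the same maps \(\pi\) and \(\iota\), you identify the summands of \(\bigoplus_{x\to y}j_y(A_x)\) at a point \(w\) with the vertices \(x\succ w\) via the first edge of the unique directed path from \(x\) to \(w\), and you build the same telescoping contracting homotopy from the tree structure of \(U_w\); the argument for the partial adjoint via \eqref{eq:KKX_out_Fix2}, splitting of idempotents, and closure under countable coproducts is likewise the paper's. The only real variation is in the final universality step. After applying Theorem~\ref{the:characterise_universal_homological} to \(\KKcat^X\to\Abel{}[X]\), the paper identifies \(\KKcat[X]\) with the exact subcategory generated by the range of \(F^X\) by invoking Corollary~\ref{cor:classify_KKX} (stated later) to conclude that \(F^X\) is surjective on objects; you instead argue directly that the range contains all projectives \(j_z(A)\) and that by the length-\(1\) resolution~\eqref{eq:projective_resolution_ups} every object of \(\KKcat[X]\) is reached from these, which is precisely the inductive mechanism of Remark~\ref{rem:universal_to_exact}. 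Your version avoids the forward reference, which is a mild tidying rather than a different method; both are sound. One small point of precision worth fixing in a write-up: when you say the projectives of \(\Abel{}[X]\) are ``direct sums of functors \(j_z(P)\)'', they are a priori only direct summands of such sums, but you handle this correctly anyway by noting that idempotents split in \(\KKcat^X\), so it does not affect the argument.
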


\begin{proof}
  The objects~\(j_z(A_z)\)
  are projective by~\eqref{eq:KKX_out_Fix}.  This is inherited by the
  direct sum \(\bigoplus_{z\in X} j_z(A_z)\).
  The identity map on~\(A_x\)
  has an adjunct \(a_x\colon j_x(A_x) \to A\)
  by~\eqref{eq:KKX_out_Fix}.  These maps induce the map
  \(\pi= (a_x)_{x\in X}\colon \bigoplus_{x\in X} j_x(A_x) \to A\).
  For each edge \(x\to y\)
  in the directed graph \((X,\to)\),
  the map \((\id_{A_x},-\alpha_{y,x})\colon A_x \to A_x \oplus A_y\)
  is adjunct to a map
  \[
  a_{x\to y}\colon j_y(A_x) \to j_x(A_x) \oplus j_y(A_y)
  \subseteq \bigoplus_{x\in X} j_x(A_x)
  \]
  by~\eqref{eq:KKX_out_Fix}.  It satisfies
  \(\pi\circ a_{x\to y} = 0\).
  The maps~\(a_{x\to y}\) combine to a map
  \[
  \iota\colon \bigoplus_{x\to y} j_y(A_x) \to \bigoplus_{x} j_x(A_x)
  \]
  with \(\pi\circ\iota=0\).

  Now we prove that the maps \(\pi\)
  and~\(\iota\)
  mapped to \(\prod_{x\in X} \KKcat\)
  by the forgetful functor form a split exact sequence.  We consider
  the entries at a fixed \(z\in X\).
  The entry of~\(A\)
  at \(z\in X\)
  is simply~\(A_z\).
  The entry of \(\bigoplus_{x\in X} j_x(A_x)\)
  at \(z\in X\)
  is the direct sum of~\(A_x\)
  over all \(x\in X\)
  with \(x\succeq z\).
  The entry of \(\bigoplus_{x\to y} j_y(A_x)\)
  at \(z\in X\)
  is the direct sum of~\(A_x\)
  for all edges \(x\to y\)
  in~\(X\) with \(y\succeq z\).  The entry of~\(\pi\) at~\(z\) is
  \[
  (\alpha_{z,x})_{x\succeq z} \colon
  \bigoplus_{x\succeq z} A_x \to A_z.
  \]
  This is split surjective with the canonical section that
  maps~\(A_z\)
  identically onto the summand~\(A_z\)
  for \(x=z\).
  The entry of~\(\iota\)
  at~\(z\)
  maps the summand~\(A_x\)
  for \(x\to y\succeq z\)
  to \(A_x \oplus A_y\subseteq \bigoplus_{t\succeq z} A_t\)
  using \((\id_{A_x},-\alpha_{y,x})\).  We are going to define a map
  \[
  s_1\colon \bigoplus_{t\succeq z} A_t \to
  \bigoplus_{x\to y \succeq z} A_x
  \]
  which together with~\(s_0\)
  forms a contracting homotopy for the short chain complex formed by
  \(\iota|_z\)
  and~\(\pi|_z\).
  By assumption, if \(t\succeq z\)
  then there is a unique chain
  \(t=x_0 \to x_1 \to \dotsb \to x_\ell = z\).
  We let \((s_1)_z|_{A_t}\)
  for \(t\succeq z\)
  map the summand \(A_t\)
  in \(\bigoplus_{t\succeq z} A_t\)
  to the direct sum of~\(A_{x_i}\)
  for the edges \(x_i \to x_{i+1}\)
  for \(i=0,\dotsc,\ell-1\),
  where we use~\(\alpha_{x_i,t}\)
  to map \(A_t\)
  to~\(A_{x_i}\).
  If \(x\to y \succeq z\),
  then \(y=x_1\)
  in the above chain.  Therefore, \(s_1\circ\iota|_{A_x}\)
  is a map to \(\bigoplus_{j=0}^\ell A_{x_j}\),
  where the entry at~\(A_{x_j}\)
  is \(\alpha_{x_j,x} - \alpha_{x_j,y}\circ \alpha_{y,x} = 0\)
  for \(j=1,\dotsc,\ell\),
  and the identity map for \(j=0\).
  So \(s_1\circ \iota\)
  is the identity map.  Finally, we claim that
  \(s_0\circ \pi + \iota\circ s_1\)
  is the identity map on \(\bigoplus_{t\succeq z} A_t\).
  This is checked on each summand~\(A_t\)
  separately.  Let \(t=x_0 \to x_1 \to \dotsb \to x_\ell = z\)
  be the unique chain as above.  Then \(\iota\circ s_1\)
  is a telescoping sum of \(\pm\alpha_{x_j,t}\)
  for \(j=0,\dotsc,\ell\),
  where \(\alpha_{t,t}\)
  occurs only with sign~\(+\)
  and~\(\alpha_{z,t}\)
  only with sign~\(-\).
  And \(s_0\circ\pi\)
  is the map~\(\alpha_{z,t}\).
  Thus \(s_0\circ \pi + \iota\circ s_1\)
  is the identity map on~\(A_t\).
  This finishes the proof that~\eqref{eq:projective_resolution_ups} is
  \(\Ideal^X\)\nb-exact.
  Its entries are \(\Ideal^X\)\nb-projective.
  So it is an \(\Ideal^X\)\nb-projective resolution.

  The projective resolution~\eqref{eq:projective_resolution_ups}
  implies that~\(\KKcat[X]\)
  has enough projective objects and that an object is projective if
  and only if it is a direct summand of an object of the form
  \(\bigoplus_{z\in X} j_z(B_z)\)
  for some separable \(\Cst\)\nb-algebras~\(B_z\)
  for \(z\in X\).
  Equation~\eqref{eq:KKX_out_Fix2} says that the adjoint functor to
  \(F^X\colon \KKcat^X\to\KKcat[X]\)
  is defined on \(j_z(A)\)
  and maps it to~\(i_z(A)\).
  Since the partial adjoint commutes with direct sums, it is also
  defined on \(\bigoplus_{z\in X} j_z(B_z)\)
  for any \(B_z\inOb\KKcat\)
  and maps it to \(\bigoplus_{z\in X} i_z(B_z)\).
  Idempotents in the category~\(\KKcat^X\)
  split because it is triangulated and has countable direct sums.
  Therefore, the partial adjoint of~\(F^X\)
  is defined on all projective objects of~\(\KKcat[X]\)
  and is a section for~\(F^X\)
  there.  An argument as in the proof of
  Proposition~\ref{pro:FZ_universal_exact} shows that~\(F^X\)
  is the universal \(\Ideal^X\)\nb-exact
  functor to an exact category.  The target category is not smaller
  because the functor \(\KKcat^X \to \KKcat[X]\)
  is surjective on objects by Corollary~\ref{cor:classify_KKX}.
\end{proof}

Unlike in Section~\ref{sec:UCT_Z}, the \(\Ideal^X\)\nb-projective
objects in~\(\KKcat^X\)
do not generate~\(\KKcat^X\).
The localising subcategory \(\gen{\Proj_\Ideal}\)
generated by them is equal to the localising subcategory generated by
objects of the form \(i_x(A)\)
for \(x\in X\),
\(A\inOb\KKcat\).
If~\(X\)
is finite, then it is described in several equivalent ways in
\cite{Meyer-Nest:Bootstrap}*{Definition~4.7}.  It contains all
nuclear \(\Cst\)\nb-algebras over~\(X\).  This is the subcategory
on which Theorem~\ref{the:projectives_ups} implies a Universal
Coefficient Theorem, using \(\Hom\)
and \(\Ext\)
groups in the category \(\KKcat[X]\).
This implies the following classification theorem:

\begin{corollary}
  \label{cor:classify_KKX}
  Let \((X,\to)\)
  be a countable unique path space and let \(A\)
  and~\(B\)
  be \(\Cst\)\nb-algebras
  over~\(X\)
  that belong to the localising subcategory \(\gen{\Proj_\Ideal}\)
  generated by objects of the form~\(i_x(B)\)
  for \(x\in X\),
  \(B\inOb\KKcat\).
  Any isomorphism between \(F^X(A)\)
  and~\(F^X(B)\)
  in~\(\KKcat[X]\)
  lifts to an isomorphism in~\(\KKcat^X\).
  And any object of~\(\KKcat[X]\)
  lifts to an object of~\(\gen{\Proj_\Ideal}\).
  An isomorphism \(F^X(A) \congto F^X(B)\)
  is a family of invertible elements \(t_x\in \KK_0(A(U_x),B(U_x))\)
  for \(x\in X\) for which the diagrams
  \[
  \begin{tikzcd}
    A(U_x) \arrow[d, "t_x"] \arrow[r, hookrightarrow] &
    A(U_y) \arrow[d, "t_y"] \\
    B(U_x) \arrow[r, hookrightarrow] &
    B(U_y) 
  \end{tikzcd}
  \]
  commute in~\(\KKcat\) for all \(x,y\in X\) with \(x\to y\).
\end{corollary}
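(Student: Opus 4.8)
The plan is to deduce everything from Theorem~\ref{the:projectives_ups} together with the general Universal Coefficient Theorem machinery of~\cite{Meyer-Nest:Homology_in_KK}. First I would note that since the objects \(i_x(B)\) for \(x\in X\), \(B\inOb\KKcat\) have \(\Ideal^X\)\nb-projective images \(j_x(B)\) under \(F^X\), and since \(\KKcat[X]\) has enough projectives with every object admitting a projective resolution of length~\(1\) by~\eqref{eq:projective_resolution_ups}, the abstract UCT of \cite{Meyer-Nest:Homology_in_KK}*{Theorem~66} applies on the localising subcategory \(\gen{\Proj_\Ideal}\). Concretely, for \(A\) in this subcategory and any \(B\inOb\KKcat^X\) there is a natural short exact sequence
\[
\Ext^1_{\KKcat[X]}\bigl(\Sigma F^X(A),F^X(B)\bigr)
\into \KK^X_0(A,B) \prto
\Hom_{\KKcat[X]}\bigl(F^X(A),F^X(B)\bigr),
\]
with the \(\Ext\)-term computed via the admissible (exact) structure on \(\KKcat[X]\). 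The composition of two classes landing in \(\Ext^1\)-terms is zero by naturality, exactly as in the proof of Theorem~\ref{the:Z_classify}; hence any \(t\in\KK^X_0(A,B)\) lifting an isomorphism in \(\KKcat[X]\) is itself invertible, which gives the first assertion.

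For the lifting of objects, I would invoke \cite{Bentmann-Meyer:More_general}*{Proposition~2.3}: since every object of \(\KKcat[X]\) has an \(\Ideal^X\)\nb-projective resolution of length~\(1\) in \(\KKcat[X]\) that lifts, through the partial adjoint of \(F^X\) (which is defined on all projectives and is a section there, by Theorem~\ref{the:projectives_ups}), to a length-\(1\) \(\Ideal^X\)\nb-projective resolution in \(\KKcat^X\), one forms the mapping cone of that lifted resolution; its \(F^X\)-image is the original object of \(\KKcat[X]\), and the cone lies in \(\gen{\Proj_\Ideal}\). Uniqueness of this lift up to isomorphism then follows from the first assertion applied to the identity isomorphism. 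This simultaneously shows surjectivity of \(F^X\) on objects, closing the forward reference used in the proof of Theorem~\ref{the:projectives_ups}.

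Finally, the explicit description of an isomorphism is just an unravelling of what an isomorphism in the functor category \(\KKcat[X]\) is. Recall that \(F^X(A)\) is the object with \(A_x = A(U_x)\) and structure maps \(\alpha_{y,x}\) equal to the \(\KK\)-classes of the inclusions \(A(U_x)\hookrightarrow A(U_y)\) for \(x\to y\). An isomorphism \(F^X(A)\congto F^X(B)\) is by definition a family \((t_x)_{x\in X}\) of invertible elements \(t_x\in\KK_0(A(U_x),B(U_x))\) with \(t_y\circ\alpha_{y,x} = \beta_{y,x}\circ t_x\) for all edges \(x\to y\); substituting the inclusions for \(\alpha_{y,x}\) and \(\beta_{y,x}\) gives precisely the commuting squares displayed in the statement. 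I do not expect any real obstacle here: the only thing to be a little careful about is that the commuting squares are asserted only for edges \(x\to y\), which suffices because the relations for general \(x\succeq y\) follow by composing along a (unique) path, exactly as in the description of morphisms in \(\KKcat[X]\) given before Theorem~\ref{the:projectives_ups}.
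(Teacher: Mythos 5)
Your proposal is correct and follows essentially the same approach that the paper takes implicitly: Theorem~\ref{the:projectives_ups} gives enough projective objects, length-\(1\) resolutions, and the partial adjoint on projectives, which together yield the Universal Coefficient Theorem of \cite{Meyer-Nest:Homology_in_KK}*{Theorem~66} on \(\gen{\Proj_\Ideal}\); the lifting of isomorphisms and of objects then proceeds exactly as in the proof of Theorem~\ref{the:Z_classify} (nilpotency of the \(\Ext^1\)-part plus \cite{Bentmann-Meyer:More_general}*{Proposition~2.3}), and the last paragraph is indeed just unravelling what an isomorphism in the functor category \(\KKcat[X]\) is.
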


The criterion above may be made more explicit if, in addition,
\(A(U_x)\)
and \(B(U_x)\)
belong to the bootstrap class for all \(x\in X\).
As in Section~\ref{sec:UCT_Z}, we identify
\(A(U_x) \cong A(U_x)^+ \oplus A(U_x)^-\)
and rewrite the classes of the inclusion maps
\(\alpha_{y,x}\in \KK_0(A(U_x),A(U_y))\)
and \(\beta_{y,x}\in \KK_0(B(U_x),B(U_y))\)
for \(x\to y\)
as \(2\times2\)\nb-matrices,
and similarly for the arrows \(t_x \in \KK_0(A(U_x),B(U_x))\).
As in the case of~\(\KKcat^\Z\),
the equality \(\beta_{y,x} t_x = t_y \alpha_{y,x}\)
for \(x\to y\)
in Corollary~\ref{cor:classify_KKX} may be rewritten as four
equalities of matrix coefficients.  The equality of the diagonal terms
says that the diagrams \(\bigl(\K_*(A(U_x)),\K_*(\alpha_{y,x})\bigr)\)
and \(\bigl(\K_*(B(U_x)),\K_*(\beta_{y,x})\bigr)\)
of countable \(\Z/2\)\nb-graded
Abelian groups are isomorphic.  That is, for all \(x,y\in X\)
with \(x\to y\), the following diagram commutes:
\[
\begin{tikzcd}[column sep=large]
  \K_*(A(U_x)) \arrow[d, "\K_*(t_x)"] \arrow[r, "\K_*(\alpha_{y,x})"] &
  \K_*(A(U_y)) \arrow[d, "\K_*(t_y)"] \\
  \K_*(B(U_x)) \arrow[r, "\K_*(\beta_{y,x})"] &
  \K_*(B(U_y))
\end{tikzcd}
\]
The equality of the off-diagonal terms will be studied in
Section~\ref{sec:compare_ups}.

\section{Computation of the obstruction class}
\label{sec:triangulated}

We recall the setup of~\cite{Bentmann-Meyer:More_general}.
Let~\(\Tri\)
be a triangulated category with countable direct sums.  Let~\(\Ideal\)
be a stable homological ideal in~\(\Tri\)
with enough projective objects.  Let \(F\colon \Tri\to\Abel\)
be the universal \(\Ideal\)\nb-exact
stable homological functor (in this article, we allow~\(\Abel\)
to be exact).  We assume that~\(\Abel\)
is paired as in \cite{Bentmann-Meyer:More_general}*{Definition~2.14},
that is, \(\Abel = \Abel_+ \times \Abel_-\)
with \(\Sigma \Abel_+ = \Abel_-\)
and \(\Sigma \Abel_- = \Abel_+\).
For instance, \(\Abel\)
could be the category of \(\Z\)\nb-graded
or \(\Z/2\)\nb-graded
modules over some ring, with the suspension automorphism shifting the
grading, and \(\Abel_+\)
and~\(\Abel_-\)
may be taken to be the graded modules concentrated in even or odd
degrees, respectively.  We want to compute the obstruction class of an
object \(A\inOb\Tri\).
This is only meaningful if~\(A\)
is constructed from simpler ingredients.

We assume that there is an exact, \(\Ideal\)\nb-exact triangle
\begin{equation}
  \label{eq:triangle_B1B0A}
  B_1 \xrightarrow{\varphi} B_0 \xrightarrow{p} A \xrightarrow{i}
  \Sigma B_1,
\end{equation}
where \(B_0\)
and~\(B_1\)
are objects of~\(\gen{\Proj_\Ideal}\)
with projective resolutions of length~\(1\).
Then \(A\inOb\gen{\Proj_\Ideal}\)
as well.  The \(\Ideal\)\nb-exactness
assumption says that \(i\in\Ideal\).
Equivalently, \(F(i)=0\),
\(F(\varphi)\)
is monic, and \(F(p)\)
is epic.  The objects \(B_0\)
and~\(B_1\)
are uniquely determined up to isomorphism by \(F(B_0)\)
and~\(F(B_1)\)
because they have projective resolutions of length~\(1\)
(compare \cite{Bentmann-Meyer:More_general}*{Proposition~2.3}).  We
may split \(B_i \cong B_i^+ \oplus B_i^-\)
with \(F(B_i^\pm)\in \Abel_\pm\)
for \(i=0,1\).  Then \(\varphi\) becomes a \(2\times2\)-matrix
\[
\varphi =
\begin{pmatrix}
  \varphi_{++}& \varphi_{+-}\\
  \varphi_{-+}& \varphi_{--}
\end{pmatrix}
\]
with \(\varphi_{-+}\colon B_1^+ \to B_0^-\),
and so on.  The two diagonal entries give an element of
\[
\Tri(B_1^+,B_0^+) \oplus \Tri(B_1^-,B_0^-) \cong
\Hom_{\Abel}\bigl(F(B_1),F(B_0)\bigr),
\]
and the two off-diagonal entries give an element of
\[
\Tri(B_1^+,B_0^-) \oplus \Tri(B_1^-,B_0^+) \cong
\Ext^1_{\Abel}\bigl(\Sigma F(B_1),F(B_0)\bigr);
\]
here we have used the Universal Coefficient Theorem to compute
\(\Tri(B_1^\pm,B_0^\pm)\).
The results for these four groups only have a single Hom or a single
Ext group because of the parity assumptions.  Thus the splitting of
\(\varphi\)
into \(\varphi^0\defeq\varphi_{++}+\varphi_{--}\)
and \(\varphi^1\defeq\varphi_{+-}+\varphi_{-+}\)
splits the exact sequence
\[
\Ext^1_{\Abel}\bigl(\Sigma F(B_1),F(B_0)\bigr) \into
\Tri_*(B_1,B_0) \prto
\Hom_{\Abel}\bigl(F(B_1),F(B_0)\bigr).
\]
We shall compute the obstruction class of~\(A\)
in terms of~\(\varphi^1\).

By assumption, there is a short exact sequence
\[
\begin{tikzcd}
  F(B_1) \arrow[r, rightarrowtail, "F(\varphi)"] &
  F(B_0) \arrow[r, twoheadrightarrow, "F(p)"] &
  F(A).
\end{tikzcd}
\]
This induces a long exact sequence
\begin{multline*}
  0 \leftarrow \Ext^2_{\Abel}\bigl(\Sigma F(A),F(A)\bigr)
  \xleftarrow{\partial} \Ext^1_{\Abel}\bigl(\Sigma F(B_1),F(A)\bigr)
  \xleftarrow{F(\varphi)^*}\\
  \Ext^1_{\Abel}\bigl(\Sigma F(B_0),F(A)\bigr)
  \xleftarrow{F(p)^*} \Ext^1_{\Abel}\bigl(\Sigma F(A),F(A)\bigr)
  \leftarrow \dotsb
\end{multline*}
because \(\Ext^k_{\Abel}\bigl(F(B_i),F(A)\bigr)=0\) for \(i=0,1\), \(k\ge2\).

\begin{theorem}
  \label{the:compute_obstruction}
  The obstruction class of~\(A\) is
  \[
  \partial(p_*\varphi^1)
  = \partial(F(p)\circ \varphi^1)\in
  \Ext^2_{\Abel}\bigl(\Sigma F(A),F(A)\bigr),
  \]
  where \(F(p)\in \Hom_{\Abel}\bigl(\Sigma F(B_0),\Sigma F(A)\bigr)\)
  and \(\varphi^1\in \Ext^1_{\Abel}\bigl(\Sigma F(B_1),F(B_0)\bigr)\).
\end{theorem}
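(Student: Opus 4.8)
The plan is to recall the definition of the obstruction class from \cite{Bentmann-Meyer:More_general} and then identify it with the asserted expression by choosing a convenient projective resolution of length~\(2\) for \(F(A)\). The obstruction class of~\(A\) is defined as follows: pick an \(\Ideal\)-projective resolution \(P_\bullet \to A\) of length~\(2\), apply~\(F\) to get a projective resolution \(F(P_\bullet) \to F(A)\) in~\(\Abel\), and the boundary map \(P_2 \to P_1\) has a component measuring the ``parity mismatch'', which lives in \(\Tri(P_2^\pm, P_1^\mp)\cong \Ext^1_\Abel(\Sigma F(P_2), F(P_1))\); pushing this forward along the resolution gives a class in \(\Ext^2_\Abel(\Sigma F(A), F(A))\). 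Since \(B_0\) and~\(B_1\) have \(\Ideal\)-projective resolutions of length~\(1\), say \(Q_\bullet^{(j)} \to B_j\) with \(0\to Q_1^{(j)} \to Q_0^{(j)} \to B_j \to 0\) for \(j=0,1\), the triangle~\eqref{eq:triangle_B1B0A} lets us splice these into a length-\(2\) resolution of~\(A\): concretely, \(A\) is the cone of \(\varphi\colon B_1 \to B_0\), and the horseshoe-type construction produces \(0\to Q_1^{(1)} \to Q_0^{(1)} \oplus Q_1^{(0)} \to Q_0^{(0)} \to A \to 0\) after choosing a lift of \(\varphi\) to the resolutions.

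First I would make this splicing explicit. Applying~\(F\) gives the projective resolution
\[
0 \to F(Q_1^{(1)}) \to F(Q_0^{(1)}) \oplus F(Q_1^{(0)}) \to F(Q_0^{(0)}) \to F(A) \to 0
\]
in~\(\Abel\), which computes \(\Ext^*_\Abel(F(A),-)\). The key observation is that this resolution factors through \(F(B_1) \rightarrowtail F(B_0) \twoheadrightarrow F(A)\): the truncation \(F(Q_\bullet^{(1)})\) resolves \(F(B_1)\) and \(F(Q_\bullet^{(0)})\) resolves \(F(B_0)\), and the connecting differential \(F(Q_1^{(1)}) \to F(Q_1^{(0)})\) in the spliced complex is precisely a lift of \(F(\varphi)\colon F(B_1) \to F(B_0)\) through the two resolutions. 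Using the dimension-shifting identification \(\Ext^1_\Abel(\Sigma F(B_1), F(B_0)) \cong \Hom_\Abel(\Sigma Z_1, F(B_0))/(\text{image})\), where \(Z_1 = \ker(F(Q_0^{(1)}) \to F(B_1))\), the off-diagonal part \(\varphi^1\) of~\(\varphi\) is represented by exactly this connecting map — up to the sign/parity bookkeeping, the component of the spliced differential that changes parity is the contribution of \(\varphi^1\) rather than \(\varphi^0\). The diagonal part \(\varphi^0\) only contributes a splitting-respecting piece and therefore does not affect the obstruction class.

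Next I would trace the push-forward through the resolution. The obstruction class is the image of the parity-mismatch component of the length-\(2\) differential under the natural map \(\Ext^1_\Abel(\Sigma F(B_1), F(B_0)) \to \Ext^1_\Abel(\Sigma F(B_1), F(A)) \to \Ext^2_\Abel(\Sigma F(A), F(A))\), where the first map is \(F(p)_* = p_*\) (postcomposition with \(F(p)\colon F(B_0)\to F(A)\)) and the second is the connecting homomorphism~\(\partial\) of the long exact sequence displayed just before the theorem, coming from the short exact sequence \(F(B_1)\rightarrowtail F(B_0)\twoheadrightarrow F(A)\). Composing, the obstruction class is \(\partial(p_* \varphi^1) = \partial(F(p)\circ\varphi^1)\), as claimed. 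That \(\partial\) is exactly the dimension-shift realizing the passage from the \(B_1,B_0\)-resolutions to the \(A\)-resolution is the content of the standard identification of \(\Ext^2\) via a two-step resolution; the displayed long exact sequence already packages this.

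The main obstacle I expect is the parity bookkeeping: one must check carefully that when the length-\(2\) resolution of~\(A\) is assembled from the length-\(1\) resolutions of \(B_0\) and~\(B_1\), the component of its boundary map that ``crosses parity'' (and hence defines the obstruction) is precisely the class \(\varphi^1\) pushed forward, with the correct sign, and that \(\varphi^0\) contributes nothing. This requires being explicit about the splitting \(\Tri_*(B_1,B_0) \cong \Hom_\Abel \oplus \Ext^1_\Abel\) used to define \(\varphi^0, \varphi^1\), and comparing it with the splitting implicit in choosing the horseshoe resolution; one needs that these are compatible, which follows from naturality of the Universal Coefficient Theorem but must be spelled out. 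A secondary point is to confirm that the obstruction class of~\cite{Bentmann-Meyer:More_general} is independent of the chosen resolutions of \(B_0\) and~\(B_1\) — but this is immediate since those objects have resolutions of length~\(1\) and hence are rigid in the sense of \cite{Bentmann-Meyer:More_general}*{Proposition~2.3}, so any two choices are isomorphic over the identity.
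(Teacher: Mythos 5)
Your recollection of where the obstruction class lives is not correct, and this creates a genuine gap. You claim the obstruction is encoded in a ``parity-mismatch'' component of the boundary map $P_2 \to P_1$ of the length-$2$ $\Ideal$-projective resolution, a component that would live in $\Tri(P_2^\pm, P_1^\mp) \cong \Ext^1_{\Abel}\bigl(\Sigma F(P_2), F(P_1)\bigr)$. But $P_2$ is $\Ideal$-projective, so $F(P_2)$ (and hence $\Sigma F(P_2)$) is projective in $\Abel$, and $\Ext^1_{\Abel}\bigl(\Sigma F(P_2), F(P_1)\bigr) = 0$: differentials between $\Ideal$-projectives have no parity-reversing part. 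For the same reason, the ``connecting differential'' $\Phi_1\colon P_{11} \to P_{01}$ of the spliced resolution is the \emph{unique} lift of $F(\varphi)$ through projectives, hence parity-preserving; it lifts $\varphi^0$, not $\varphi^1$, since $F(\varphi^1)=0$ by the very definition of $\varphi^1$ as the parity-reversing (Ext) summand of $\varphi$.

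The definition from \cite{Bentmann-Meyer:More_general} reads the obstruction off from the arrow $\gamma\colon D \to P_{00}$ in the exact triangle over $p\circ d_{00}$, where $D$ has a length-$1$ $\Ideal$-projective resolution but is \emph{not} projective, so $\Tri(D,P_{00})$ has a genuine $\Ext$ summand and $\gamma$ decomposes as $\gamma^0+\gamma^1$. The real content of the proof is to relate $\gamma^1$ to $\varphi^1$. This requires constructing a morphism of exact triangles from $(D,P_{00},A)$ to $(B_1,B_0,A)$; its vertical arrow $\varepsilon\colon D\to B_1$ itself carries a parity-reversing part $\varepsilon^1$, and the left commuting square gives $d_{00}\circ\gamma^1 = \varphi\circ\varepsilon^1 + \varphi^1\circ\varepsilon$ (using that a product of two odd parts vanishes). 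Composing with $p$ kills the first summand because $p\circ\varphi = 0$, and naturality of the connecting map for the induced morphism of extensions in $\Abel$ then yields $\partial(F(p)\circ\varphi^1)$. Your proposal never introduces the triangle $(D,P_{00},A)$ or the comparison arrow $\varepsilon$, so the identity connecting $\gamma^1$ to $\varphi^1$ never appears, and the claimed formula cannot be reached along the route you describe.
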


\begin{proof}
  We shall recall the construction of obstruction classes
  in~\cite{Bentmann-Meyer:More_general} along the way.  It starts with
  an \(\Ideal\)\nb-projective
  resolution of~\(A\)
  of length~\(2\).
  So first we have to construct this.  We use the projective
  resolutions of~\(F(B_i)\)
  of length~\(1\),
  which exist by assumption.  They lift canonically to
  \(\Ideal\)\nb-projective resolutions
  \begin{equation}
    \label{eq:projective_resolutions_Bi}
    0 \to P_{i1} \xrightarrow{d_{i1}} P_{i0} \xrightarrow{d_{i0}} B_i
  \end{equation}
  in~\(\Tri\)
  for \(i=0,1\)
  (see \cite{Meyer-Nest:Homology_in_KK}*{Theorem~59}).
  Since~\eqref{eq:projective_resolutions_Bi} is a resolution,
  \(d_{i1}\)
  is \(\Ideal\)\nb-monic
  and \(d_{i0}\)
  is \(\Ideal\)\nb-epic.
  The arrow \(\varphi\in\Tri(B_1,B_0)\) lifts to a chain map
  \begin{equation}
    \label{eq:lift_varphi_resolutions}
    \begin{tikzcd}[baseline=(current bounding box.west)]
      P_{11} \arrow[r, rightarrowtail, "d_{11}"]
      \arrow[d, "\Phi_1"] &
      P_{10} \arrow[r, twoheadrightarrow, "d_{10}"]
      \arrow[d, "\Phi_0"] & B_1
      \arrow[d, rightarrowtail, "\varphi"] \\
      P_{01} \arrow[r, rightarrowtail, "d_{01}"] &
      P_{00} \arrow[r, twoheadrightarrow, "d_{00}"] & B_0
    \end{tikzcd}
  \end{equation}
  between the \(\Ideal\)\nb-projective
  resolutions~\eqref{eq:projective_resolutions_Bi} (see
  \cite{Meyer-Nest:Homology_in_KK}*{Proposition~44}).  We
  write~\(\into\)
  for \(\Ideal\)\nb-monic
  and~\(\prto\) for \(\Ideal\)\nb-epic maps.  We claim that
  \begin{equation}
    \label{eq:resolution_A}
    0 \to
    P_{11} \xrightarrow{(-d_{11},\Phi_1)} P_{10} \oplus P_{01}
    \xrightarrow{(\Phi_0,d_{01})} P_{00}
    \xrightarrow{p\circ d_{00}} A
    \to 0
  \end{equation}
  is an \(\Ideal\)\nb-projective
  resolution of~\(A\)
  of length~\(2\).
  The entries are \(\Ideal\)\nb-projective
  by construction.  Next we prove that~\eqref{eq:resolution_A} is a
  resolution, that is, it becomes an exact chain complex when we
  apply~\(F\) to it.

  When we apply~\(F\)
  to the diagram~\eqref{eq:lift_varphi_resolutions}, the two rows
  become short exact sequences in~\(\Abel\),
  and the vertical maps become a chain map between them.  The mapping
  cone of this chain map is again an exact chain complex in~\(\Abel\).
  It has the form
  \[
  0 \to F(P_{11})
  \to F(P_{10}) \oplus F(P_{01})
  \to F(P_{00}) \oplus F(B_1)
  \to F(B_0) \to 0.
  \]
  The map \(F(\varphi)\colon F(B_1) \to F(B_0)\)
  is monic with cokernel~\(F(A)\).
  So the direct summand \(F(B_1)\)
  and its image in \(F(B_0)\)
  together form a contractible subcomplex.  The quotient by it is
  again an exact chain complex in~\(\Abel\).
  This is what we get by applying~\(F\)
  to~\eqref{eq:resolution_A}.  So this is a resolution as asserted.

  An axiom for triangulated categories provides an exact triangle
  \[
  \begin{tikzcd}[column sep=large]
    P_{11} \arrow[r, rightarrowtail, "{(-d_{11},\Phi_1)}"] &
    P_{10} \oplus P_{01}  \arrow[r, twoheadrightarrow] &
    D \arrow[r] & \Sigma P_{11}
  \end{tikzcd}
  \]
  containing \((-d_{11},\Phi_1)\).
  Similarly, the map \(p\circ d_{00}\colon P_{00}\to A\)
  in~\eqref{eq:resolution_A} is part of an exact triangle
  \begin{equation}
    \label{eq:DP00A}
    \begin{tikzcd}[column sep=large]
      D' \arrow[r, rightarrowtail, "\gamma"] &
      P_{00}  \arrow[r, twoheadrightarrow, "p\circ d_{00}"] &
      A \arrow[r] & \Sigma D'.
    \end{tikzcd}
  \end{equation}
  The long exact sequences for~\(F\)
  applied to these two exact triangles show that \(F(D)\)
  is the cokernel of the monomorphism \(F(-d_{11},\Phi_1)\),
  and that \(F(D')\)
  is the kernel of the epimorphism \(F(p\circ d_{00})\).
  The exactness of~\eqref{eq:resolution_A} implies
  \(F(D)\cong F(D')\).
  Since \(B_0\)
  and~\(B_1\)
  belong to \(\gen{\Proj_\Ideal}\),
  so do \(D\)
  and~\(D'\).
  And \(F(D)\)
  has a projective resolution of length~\(1\)
  by construction.  Hence the Universal Coefficient Theorem applies to \(D\)
  and~\(D'\).
  Thus the isomorphism \(F(D)\cong F(D')\)
  lifts to an isomorphism \(D\cong D'\).
  We shall identify \(D=D'\).

  The Universal Coefficient Theorem for~\(D\) gives a short exact sequence
  \begin{equation}
    \label{eq:UCT_D_P00}
    \begin{tikzcd}[column sep=scriptsize]
      \Ext^1_{\Abel}\bigl(\Sigma F(D), F(P_{00})\bigr) \arrow[r, rightarrowtail] &
      \Tri(D,P_{00}) \arrow[r, twoheadrightarrow, "F"] &
      \Hom_{\Abel}\bigl(F(D),F(P_{00})\bigr).
    \end{tikzcd}
  \end{equation}
  We split \(D=D^+\oplus D^-\)
  and \(P_{00} = P_{00}^+ \oplus P_{00}^-\)
  into objects of even and odd parity as in the construction
  of~\(\varphi^1\)
  above the theorem.  Then \(\Tri(D,P_{00})\)
  splits accordingly as a \(2\times2\)-matrix.
  The sum of the diagonal terms
  \(\Tri(D^+,P_{00}^+) \oplus\Tri(D^-,P_{00}^-)\)
  is isomorphic to \(\Hom_{\Abel}\bigl(F(D),F(P_{00})\bigr)\),
  whereas the sum of the off-diagonal terms
  \(\Tri(D^-,P_{00}^+) \oplus\Tri(D^+,P_{00}^-)\)
  is isomorphic to \(\Ext^1_{\Abel}\bigl(\Sigma F(D),F(P_{00})\bigr)\).
  This is the (unnatural) splitting of the Universal Coefficient Theorem exact
  sequence~\eqref{eq:UCT_D_P00} that follows because~\(\Abel\)
  is paired.  In particular, we decompose
  \(\gamma = \gamma^0 + \gamma^1\)
  into its parity-preserving and parity-reversing parts.

  Let~\(A'\)
  be another object of~\(\gen{\Proj_\Ideal}\)
  with an isomorphism \(F(A') \cong F(A)\).
  The argument above shows that both \(A\)
  and~\(A'\)
  are cones of some \(\gamma,\gamma'\in\Tri(D, P_{00})\)
  as in~\eqref{eq:DP00A}, which lift the inclusion map
  \(F(D) \into F(P_{00})\)
  that we get from the resolution~\eqref{eq:resolution_A}.  The
  \emph{relative obstruction class} is defined as follows: compose
  \[
  \gamma-\gamma'\in \Ext^1_{\Abel}\bigl(\Sigma F(D), F(P_{00})\bigr) \subseteq
  \Tri(D,P_{00})
  \]
  with the map \(F(p\circ d_{00})\colon F(P_{00}) \to F(A)\)
  and apply the boundary map for the extension
  \(F(D) \into F(P_{00}) \prto F(A)\).
  That is, plug \(\gamma-\gamma'\) into the map
  \begin{multline}
    \label{eq:transfer_to_Ext2}
    \Ext^1_{\Abel}\bigl(\Sigma F(D),F(P_{00})\bigr) \xrightarrow{F(p\circ d_{00})_*}
    \Ext^1_{\Abel}\bigl(\Sigma F(D),F(A)\bigr)
    \\ \xrightarrow{\partial_{DP_{00}A}} \Ext^2_{\Abel}\bigl(\Sigma F(A),F(A)\bigr).
  \end{multline}
  Let \(\gamma^0\colon D\to P_{00}\)
  be the unique parity-preserving arrow that lifts the inclusion map
  \(F(D) \into F(P_{00})\)
  and let~\(A^0\)
  be its cone.  The \emph{obstruction class} of~\(A\)
  is the relative obstruction class for \(A\)
  and~\(A^0\).
  That is, we plug \(\gamma^1 \defeq \gamma- \gamma^0\)
  into~\eqref{eq:transfer_to_Ext2}.

  Finally, we relate the obstruction class defined above
  to~\(\varphi^1\).
  The solid square in the following diagram commutes:
  \begin{equation}
    \label{eq:compare_triangles_BBA_DPA}
    \begin{tikzcd}[baseline=(current bounding box.west)]
      B_1  \arrow[r, rightarrowtail, "\varphi"] &
      B_0  \arrow[r, twoheadrightarrow, "p"] &
      A \arrow[r] &
      \Sigma B_1\\
      D \arrow[r, rightarrowtail, "\gamma"]
      \arrow[u, dotted, "\varepsilon"] &
      P_{00} \arrow[r, twoheadrightarrow, "p\circ d_{00}"]
      \arrow[u, twoheadrightarrow, "d_{00}"]&
      A \arrow[r]
      \arrow[u, equal] &
      \Sigma D \arrow[u, dotted, "\Sigma \varepsilon"]
    \end{tikzcd}
  \end{equation}
  By the third axiom of triangulated categories, there is an
  arrow~\(\varepsilon\)
  making all three squares commute.  We shall only use the left
  square.  Since \(D\),
  \(B_1\),
  \(B_0\)
  and~\(P_{00}\)
  have projective resolutions of length~\(1\)
  and~\(\Abel\)
  is paired, the Universal Coefficient Theorem allows us to split them
  into even and odd parts.  Hence each of the arrows \(\gamma\),
  \(\varphi\),
  \(\varepsilon\)
  and~\(d_{00}\)
  splits into a parity-preserving and a parity-reversing part.  We
  have already used the splittings \(\varphi=\varphi^0+\varphi^1\)
  and \(\gamma=\gamma^0+\gamma^1\),
  and now we also split \(\varepsilon=\varepsilon^0+\varepsilon^1\).
  The arrow \(d_{00}\)
  is parity-preserving because
  \(\Tri(P_{00},B_0) \cong \Hom_{\Abel}\bigl(F(P_{00}),F(B_0)\bigr)\)
  has no parity-reversing part.  The left commuting square
  in~\eqref{eq:compare_triangles_BBA_DPA} implies
  \begin{equation}
    \label{eq:odd_part_relation}
    d_{00}\circ \gamma^1
    = \varphi^0\circ \varepsilon^1 + \varphi^1\circ\varepsilon^0
    = \varphi\circ \varepsilon^1 + \varphi^1\circ\varepsilon.
  \end{equation}
  The second step uses that the composite of two odd terms always
  vanishes, that is, the \(\Ext^1\)-term in the Universal Coefficient Theorem is nilpotent.

  Composing \(\gamma^1\in\Ext^1_{\Abel}\bigl(\Sigma F(D),F(P_{00})\bigr)\)
  with \(F(p\circ d_{00})\)
  as in~\eqref{eq:transfer_to_Ext2} has the same effect as composing
  with \(p\circ d_{00}\)
  because the exact sequence in the Universal Coefficient Theorem is natural.  Thus the
  obstruction class of~\(A\)
  is the image of
  \(p\circ d_{00}\circ \gamma^1 \in\Ext^1_{\Abel}\bigl(\Sigma F(D),F(A)\bigr)\)
  under the boundary map
  \[
  \partial_{DP_{00}A}\colon
  \Ext^1_{\Abel}\bigl(\Sigma F(D),F(A)\bigr) \to \Ext^2_{\Abel}\bigl(\Sigma F(A),F(A)\bigr)
  \]
  Equation~\eqref{eq:odd_part_relation} and \(p\circ\varphi=0\) imply
  \[
  p\circ d_{00}\circ\gamma^1
  = p\circ \varphi\circ \varepsilon^1 + p\circ \varphi^1\circ\varepsilon
  = p\circ \varphi^1\circ\varepsilon.
  \]
  The composite
  \(p\circ \varphi^1 = p_*(\varphi^1) \in
  \Ext^1_{\Abel}\bigl(\Sigma F(B_1),F(A)\bigr)\)
  also appears in the statement of the theorem.  Composing
  with~\(\varepsilon\)
  in~\(\Tri\)
  has the same effect as composing with \(F(\varepsilon)\)
  in the graded category \(\Ext^*_{\Abel}\).
  When we apply~\(F\)
  to the morphism of exact
  triangles~\eqref{eq:compare_triangles_BBA_DPA}, we get the following
  morphism of extensions in~\(\Abel\):
  \[
  \begin{tikzcd}[column sep=huge]
    F(B_1)  \arrow[r, rightarrowtail, "F(\varphi)"] &
    F(B_0)  \arrow[r, twoheadrightarrow, "F(p)"] &
    F(A)\\
    F(D) \arrow[r, rightarrowtail, "F(\gamma)"]
    \arrow[u, dotted, "F(\varepsilon)"] &
    F(P_{00}) \arrow[r, twoheadrightarrow, "F(p)\circ F(d_{00})"]
    \arrow[u, twoheadrightarrow, "F(d_{00})"]&
    F(A) \arrow[u, equal]
  \end{tikzcd}
  \]
  Since boundary maps in Ext-theory are natural, the boundary map
  \[
  \partial\colon \Ext^1_{\Abel}\bigl(\Sigma F(B_1),F(A)\bigr) \to
  \Ext^2_{\Abel}\bigl(\Sigma F(A),F(A)\bigr)
  \]
  for the top row is equal to the composite of \(F(\varepsilon)\)
  and the boundary map~\(\partial_{DP_{00}A}\).
  Thus the obstruction class of~\(A\)
  is \(\partial(F(p)\circ\varphi^1)\) as asserted.
\end{proof}

\section{Comparison of classification theorems}
\label{sec:compare_classification}

In this section, we apply Theorem~\ref{the:compute_obstruction} in
several cases to relate the classification theorem involving the
obstruction class to other classification theorems.  We first compare
the classification for \(\Z\)\nb-actions
in Theorem~\ref{the:Z_classify} with the one obtained
in~\cite{Bentmann-Meyer:More_general}.  Then we compare the
classification for \(\Cst\)\nb-algebras
over a unique path space~\((X,\to)\)
in Corollary~\ref{cor:classify_KKX} with the one
in~\cite{Bentmann-Meyer:More_general}.  In both cases, the invariants
can be translated into each other rather directly.

\subsection{Actions of the integers}
\label{sec:Z-action}

The Universal Coefficient Theorem for \(\Z\)\nb-actions
in Section~\ref{sec:UCT_Z} is based on the stable homological
ideal~\(\Ideal^\Z\)
defined by the forgetful functor \(\KKcat^\Z\to\KKcat\).
Now we use another ideal~\(\Ideal^\Z_\K\).
Let~\(\Abel^\Z\)
be the category of countable \(\Z/2\)-graded
\(\Z[x,x^{-1}]\)\nb-modules.  Let
\[
F^\Z_\K\colon \KKcat^\Z\to\Abel^\Z,\qquad
(A,\alpha) \mapsto \bigl(\K_*(A),\K_*(\alpha)\bigr),
\]
that is, we map \((A,\alpha)\inOb \KKcat^\Z\)
to the \(\Z/2\)\nb-graded
Abelian group \(\K_*(A)\)
with the \(\Z[x,x^{-1}]\)\nb-module
structure given by the automorphism \(\K_*(\alpha)\)
of \(\K_*(A)\).  Let
\[
\Ideal^\Z_\K(A,B) \defeq
\setgiven{\varphi\in \KK^\Z_0(A,B)}{F^\Z_\K(\varphi)=0}
\]
be the kernel of~\(F^\Z_\K\)
on morphisms.  This example is treated
in~\cite{Bentmann-Meyer:More_general}, but there \(\KKcat^\Z\)
is disguised as \(\KKcat^\T\)
for the circle group~\(\T\).
These two categories are equivalent by Baaj--Skandalis duality
(see~\cite{Baaj-Skandalis:Hopf_KK}).  The functor~\(F^\Z_\K\)
corresponds to the functor on \(\KKcat^\T\)
that maps a \(\Cst\)\nb-algebra
with a continuous \(\T\)\nb-action
to \(\K_*^\T(A)\)
with the canonical module structure over the representation ring
\(\Z[x,x^{-1}]\)
of~\(\T\), which is used in~\cite{Bentmann-Meyer:More_general}.

If \(B\inOb\KK^\Z\), then~\eqref{eq:FZ_adjoint} implies
\[
\KK^\Z_0(\Cont_0(\Z), B)
\cong \KK_0(\C,B)
\cong \K_0(B)
\cong \Hom_{\Abel^\Z}\bigl(\Z[x,x^{-1}], F^\Z_\K(B)\bigr).
\]
Thus the partial adjoint of~\(F^\Z_\K\)
is defined on the rank-\(1\)
free module \(\Z[x,x^{-1}]\)
and maps it to \(\Cont_0(\Z)\).
Then it is defined on all free modules, also in odd parity.
Since idempotents in~\(\KKcat^\Z\)
split, the partial adjoint~\((F^\Z_\K)^\lad\)
of~\(F^\Z_\K\)
is defined on all projective objects of~\(\Abel^\Z\).
Since \(F^\Z_\K(\Cont_0(\Z))\)
is the rank-\(1\)
free module again, we get
\((F^\Z_\K) \circ (F^\Z_\K)^\lad(P) \cong P\)
for all projective objects of~\(\Abel^\Z\).
Any object in~\(\Abel^\Z\)
has a projective resolution of length~\(2\).
Hence it belongs to the image of~\(F^\Z_\K\)
by \cite{Bentmann-Meyer:More_general}*{Lemma~2.4}.
Remark~\ref{rem:universal_to_exact} and
Theorem~\ref{the:characterise_universal_homological} show
that~\(F^\Z_\K\)
is the universal \(\Ideal^\Z_\K\)\nb-exact
stable homological functor both to an exact and to an Abelian
category.

The category~\(\Abel^\Z\)
is paired in an obvious way, using the subcategories~\(\Abel^\Z_\pm\)
of countable \(\Z[x,x^{-1}]\)\nb-modules
concentrated in degree \(0\)
and~\(1\),
respectively.  So the obstruction class is defined for any object
of~\(\KKcat^\Z\).
Assume that \((A,\alpha)\inOb \KKcat^\Z\)
is such that~\(A\)
belongs to the bootstrap class in~\(\KKcat\).
Equivalently, \((A,\alpha)\)
belongs to the localising subcategory of~\(\KKcat^\Z\)
generated by the \(\Ideal^\Z_\K\)\nb-projective
object \(\Cont_0(\Z)\).
Then the main result of~\cite{Bentmann-Meyer:More_general} shows
that~\(A\)
is determined uniquely up to isomorphism by
\(\bigl(\K_*(A),\K_*(\alpha)\bigr)\inOb \Abel^\Z\)
and the obstruction class.

A chain complex in~\(\KKcat^\Z\)
that is \(\Ideal^\Z\)\nb-exact
is also \(\Ideal^\Z_\K\)\nb-exact
because \(\Ideal^\Z \subseteq \Ideal^\Z_\K\).
So the \(\Ideal^\Z\)\nb-projective
resolution in~\eqref{eq:lift_AZ_resolution} is
\(\Ideal^\Z_\K\)\nb-exact.
Its entries \(\Cont_0(\Z,A)\)
are no longer \(\Ideal^\Z_\K\)\nb-projective.
We claim, however, that they have \(\Ideal^\Z_\K\)\nb-projective
resolutions of length~\(1\).
Let \(P_1 \into P_0\prto \K_*(A)\)
be a resolution of the \(\Z/2\)\nb-graded
Abelian group~\(\K_*(A)\) by countable free Abelian groups.  Then
\[
\Z[x,x^{-1}] \otimes P_1 \into \Z[x,x^{-1}] \otimes P_0
\prto \Z[x,x^{-1}] \otimes \K_*(A)
\]
is a projective resolution of
\[
F^\Z_\K(\Cont_0(\Z,A)) \cong \Z[x,x^{-1}] \otimes \K_*(A).
\]
Here the tensor products with \(\Z[x,x^{-1}]\)
carry the module structure defined by multiplication with~\(x\)
in the tensor factor~\(\Z[x,x^{-1}]\).
So we are in the situation of~\eqref{eq:triangle_B1B0A}.
Theorem~\ref{the:compute_obstruction} implies the following formula
for the obstruction class of~\(A\):

\begin{theorem}
  \label{the:obstruction_class_Z}
  Let~\(A\)
  be a \(\Cst\)\nb-algebra
  in the bootstrap class in~\(\KKcat\)
  and \(\alpha\in\Aut(A)\).
  Split \([\alpha]\in \KK_0(A,A)\)
  into parity-preserving and parity-reversing parts
  \(\alpha^0\in \Hom\bigl(\K_*(A),\K_*(A)\bigr)\)
  and \(\alpha^1\in \Ext\bigl(\K_{1+*}(A),\K_*(A)\bigr)\).  Define
  \[
  \gamma\colon \Ext^1_\Z\bigl(\K_{1+*}(A),\K_*(A)\bigr) \to
  \Ext^1_\Z\bigl(\K_{1+*}(A),\K_*(A)\bigr),\qquad
  x\mapsto \alpha\circ x - x\circ\alpha.
  \]
  Then
  \(\Ext^2_{\Z[x,x^{-1}]}\bigl(\Sigma F^\Z_\K(A),F^\Z_\K(A)\bigr)
  \cong \coker \gamma\)
  and the obstruction class of~\(A\)
  is the image of \(-\alpha^1 (\alpha^0)^{-1}\) in this cokernel.
\end{theorem}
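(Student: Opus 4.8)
The plan is to specialise Theorem~\ref{the:compute_obstruction} to the resolution~\eqref{eq:lift_AZ_resolution}. As noted just before the statement, the \(\Ideal^\Z\)\nb-projective resolution~\eqref{eq:lift_AZ_resolution} is also \(\Ideal^\Z_\K\)\nb-exact, its two outer terms lie in \(\gen{\Proj_{\Ideal^\Z_\K}}\) because~\(A\) belongs to the bootstrap class, and they have \(\Ideal^\Z_\K\)\nb-projective resolutions of length~\(1\), obtained by tensoring a free resolution of the countable abelian group~\(\K_*(A)\) with \(\Z[x,x^{-1}]\). So the hypotheses of Theorem~\ref{the:compute_obstruction} hold with \(\Tri=\KKcat^\Z\), \(F=F^\Z_\K\), \(\Abel=\Abel^\Z\) paired through its degree-\(0\) and degree-\(1\) parts, and \(B_0=B_1=\Cont_0(\Z)\otimes A\). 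The theorem then identifies the obstruction class of~\(A\) with \(\partial\bigl(F^\Z_\K(p)\circ\varphi^1\bigr)\), where~\(\varphi^1\) is the parity\nb-reversing part of~\(\varphi\) and~\(\partial\) is the connecting map of the extension \(F^\Z_\K(B_1)\into F^\Z_\K(B_0)\prto F^\Z_\K(A)\).

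Next I would transport everything through the induced\nb-module adjunction. Since \(F^\Z_\K(\Cont_0(\Z)\otimes A)\cong\Z[x,x^{-1}]\otimes_\Z\K_*(A)\), with the module structure coming from multiplication on the tensor factor~\(\Z[x,x^{-1}]\), and \(\Z[x,x^{-1}]\otimes_\Z(-)\) is exact and sends projective abelian groups to projective modules, one has \(\Ext^i_{\Z[x,x^{-1}]}\bigl(\Z[x,x^{-1}]\otimes_\Z M,N\bigr)\cong\Ext^i_\Z(M,N)\) for every \(\Z[x,x^{-1}]\)\nb-module~\(N\); in particular these groups vanish for \(i\ge2\). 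Under the resulting isomorphisms, \(\Ext^1_{\Abel^\Z}\bigl(\Sigma F^\Z_\K(B_i),F^\Z_\K(A)\bigr)\) becomes \(\Ext^1_\Z\bigl(\K_{1+*}(A),\K_*(A)\bigr)\). Because \(\K_*\) of a Kasparov class retains only its Hom\nb-part, \(F^\Z_\K(\varphi)=(\alpha^0)^{-1}\otimes\mathrm{mult}(x)-\id\); a short computation then shows that the map \(F^\Z_\K(\varphi)^*\) corresponds to \(\xi\mapsto\alpha^0\circ\xi\circ(\alpha^0)^{-1}-\xi\), which has the same cokernel as~\(\gamma\). Since the long exact sequence of Theorem~\ref{the:compute_obstruction} identifies \(\Ext^2_{\Z[x,x^{-1}]}\bigl(\Sigma F^\Z_\K(A),F^\Z_\K(A)\bigr)\) with \(\coker\bigl(F^\Z_\K(\varphi)^*\bigr)\), this already proves the first assertion.

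It remains to evaluate \(F^\Z_\K(p)\circ\varphi^1\) and to read it off in \(\coker\gamma\). First I would compute~\(\varphi^1\): since the Kasparov product of two Ext\nb-terms in the universal coefficient theorem for~\(\KKcat\) vanishes, \([\alpha]^{-1}=(\alpha^0)^{-1}-(\alpha^0)^{-1}\alpha^1(\alpha^0)^{-1}\) is the decomposition of \([\alpha]^{-1}\in\KK_0(A,A)\) into its parity\nb-preserving and parity\nb-reversing parts. Because \(\varphi=[\tau]\otimes[\alpha]^{-1}-1\) and \([\tau]\) is parity\nb-preserving and compatible with the splitting \(A\cong A^+\oplus A^-\), the parity\nb-reversing part of~\(\varphi\) is \(\varphi^1=-[\tau]\otimes\bigl((\alpha^0)^{-1}\alpha^1(\alpha^0)^{-1}\bigr)\), which under the identifications above is \(\mathrm{mult}(x)\) composed with the morphism induced by \(-(\alpha^0)^{-1}\alpha^1(\alpha^0)^{-1}\). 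Finally, \(F^\Z_\K(p)\) is the counit \(\Z[x,x^{-1}]\otimes_\Z\K_*(A)\to\K_*(A)\), \(x^n\otimes a\mapsto(\alpha^0)^n a\); pushing~\(\varphi^1\) forward along it and passing to the adjunct under the adjunction isomorphism, the factor \(\mathrm{mult}(x)\) turns into multiplication by~\(\alpha^0\) and cancels one copy of \((\alpha^0)^{-1}\). Hence \(F^\Z_\K(p)\circ\varphi^1\) corresponds to \(-\alpha^1(\alpha^0)^{-1}\) in \(\Ext^1_\Z\bigl(\K_{1+*}(A),\K_*(A)\bigr)\), and applying~\(\partial\) together with the identification of the previous paragraph shows that the obstruction class of~\(A\) is the image of \(-\alpha^1(\alpha^0)^{-1}\) in \(\coker\gamma\).

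The main obstacle will be the bookkeeping in the last two steps: matching the parity (Hom versus Ext) decomposition of~\(\varphi\) as a morphism in~\(\KKcat^\Z\) with the decomposition of \([\alpha]^{-1}\) in~\(\KKcat\), and making the induced\nb-module adjunction isomorphism on \(\Ext^1\) explicit enough to follow both the composition with \(F^\Z_\K(p)\) and the signs. The infinite direct sums \(\Cont_0(\Z)\otimes A=\bigoplus_{n\in\Z}A\) also require some care: one uses here that Kasparov theory and the functors \(\Ext^i_\Z(-,-)\) are sufficiently compatible with countable direct sums.
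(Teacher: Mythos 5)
Your proposal is correct and follows essentially the same route as the paper: apply Theorem~\ref{the:compute_obstruction} to the resolution~\eqref{eq:lift_AZ_resolution}, identify the parity-reversing part $\varphi^1=[\tau]\otimes[\alpha^{-1}]^1$ with $[\alpha^{-1}]^1=-(\alpha^0)^{-1}\alpha^1(\alpha^0)^{-1}$, and push it through $F^\Z_\K(p)$ to land on $-\alpha^1(\alpha^0)^{-1}$. The one small point worth flagging: your adjunction computation identifies $\Ext^2$ with the cokernel of $\xi\mapsto\alpha^0\xi(\alpha^0)^{-1}-\xi$ rather than of $\gamma\colon\xi\mapsto\alpha^0\xi-\xi\alpha^0$; these two maps have \emph{literally} the same image (they differ by the automorphism $\xi\mapsto\xi\alpha^0$ on the source), so the cokernels coincide as quotients of the same group and the class of $-\alpha^1(\alpha^0)^{-1}$ is unambiguous --- you should spell that out rather than just asserting ``same cokernel''. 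The paper short-cuts your explicit adjunction calculation by citing the $\Ext^2$-computation already carried out in \cite{Bentmann-Meyer:More_general}*{Section~3.2}, but the content is the same.
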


\begin{proof}
  In our case, the map~\(\varphi\)
  in~\eqref{eq:triangle_B1B0A} is the map
  \(\Cont_0(\Z) \otimes A \to \Cont_0(\Z) \otimes A\)
  in~\eqref{eq:lift_AZ_resolution}.  Split \(A=A^+ \oplus A^-\)
  into its even and odd parts as before.  Then
  \[
  \Cont_0(\Z) \otimes A \cong \Cont_0(\Z) \otimes A^+ \oplus \Cont_0(\Z)\otimes A^-
  \]
  is the parity decomposition of~\(\Cont_0(\Z) \otimes A\).
  The translation~\(\tau\)
  and the identity are parity-preserving.  Decompose \([\alpha]\)
  and~\([\alpha^{-1}]\)
  into their even and odd parts.  The parity-reversing part of the map
  on \(\Cont_0(\Z) \otimes A\)
  is \(\varphi^1 = [\tau]\otimes [\alpha^{-1}]^1\).
  The map \(p\in \KK^\Z_0(\Cont_0(\Z) \otimes A,A)\)
  satisfies \(p (\tau\otimes \id_A) = [\alpha] p\)
  because it is \(\Z\)\nb-equivariant,
  and it restricts to the identity map on the \(0\)th
  summand of \(\Cont_0(\Z) \otimes A\).
  The isomorphism
  \(\KK^\Z_0\bigl((\Cont_0(\Z,A),\tau), (B,\beta) \bigr) \cong
  \KK_0(A,B)\)
  in~\eqref{eq:FZ_adjoint} forgets the \(\Z\)\nb-action
  and then evaluates at~\(0\).
  Therefore, the image of~\(\varphi^1\)
  in \(\KK_0(A,A)\)
  is the restriction of \(p\circ \varphi^1\)
  to the \(0th\)
  summand.  And this is \([\alpha] [\alpha^{-1}]^1 \in \KK_0(A,A)\).
  Since \(\alpha \alpha^{-1} = \id_A\)
  and products of two parity-reversing \(\KK\)\nb-classes
  always vanish,
  \([\alpha] [\alpha^{-1}]^1 + [\alpha]^1 [\alpha^{-1}] = [\id_A]^1 =
  0\).
  Theorem~\ref{the:compute_obstruction} now says that the obstruction
  class for~\(A\)
  is the image of
  \[
  [\alpha] [\alpha^{-1}]^1 = - [\alpha]^1 [\alpha^{-1}] = -\alpha^1 (\alpha^0)^{-1}
  \in \Ext_\Z\bigl(\K_{1+*}(A),\K_*(A))\bigr)
  \]
  under the boundary map to
  \(\Ext^2_{\Z[x,x^{-1}]}\bigl(\Sigma F^\Z_\K(A), F^\Z_\K(A)\bigr)\).
  The description of the latter Ext group in the theorem follows when
  we compute it with the projective resolution
  in~\eqref{eq:resolution_A} defined by the length-\(1\)
  resolutions of \(\Cont_0(\Z) \otimes A\)
  above.  This computation has already been done in
  \cite{Bentmann-Meyer:More_general}*{Section~3.2}.  When the Ext
  groups are described in this way, the boundary map in
  Theorem~\ref{the:compute_obstruction} becomes a trivial map, mapping
  an element of \(\Ext_\Z\bigl(\K_{1+*}(A),\K_*(A)\bigr)\)
  to its image in \(\coker \gamma\).
\end{proof}

The formula for the obstruction class depends, of course, on the
isomorphism
\(\Ext^2_{\Z[x,x^{-1}]}\bigl(\Sigma F^\Z_\K(A),F^\Z_\K(A)\bigr) \cong
\coker \gamma\),
and this depends on the \(\Ideal^\Z\)\nb-projective
resolution from which it is obtained.  Our theorem uses the most
obvious resolution.  One may apply the automorphism of
\(\Ext^2_{\Z[x,x^{-1}]}\bigl(\Sigma F^\Z_\K(A),F^\Z_\K(A)\bigr)\)
that composes with the automorphism~\(-\Sigma\alpha^0\)
on \(\Sigma F^\Z_\K(A)\).
This replaces the obstruction class~\(-\alpha^1 (\alpha^0)^{-1}\)
by~\(\alpha^1\).
So the images of \(-\alpha^1 (\alpha^0)^{-1}\)
and~\(\alpha^1\)
in \(\Ext^2_{\Z[x,x^{-1}]}\bigl(\Sigma F^\Z_\K(A),F^\Z_\K(A)\bigr)\)
contain the same information.

Theorem~\ref{the:obstruction_class_Z} and the computations after
Theorem~\ref{the:Z_classify} allow to deduce the classification
by~\(F^\Z_\K\)
and the obstruction class from the classification by the
invariant~\(F^\Z\)
in Section~\ref{sec:UCT_Z}.  Let \((A,\alpha)\)
and \((B,\beta)\)
belong to the bootstrap class in~\(\KKcat^\Z\).
Assume that there is an isomorphism
\(t^0\colon F^\Z_\K(A) \congto F^\Z_\K(B)\)
in~\(\Abel^\Z\),
that is, a grading-preserving \(\Z[x,x^{-1}]\)-module
isomorphism \(\K_*(A) \cong \K_*(B)\).
We use this isomorphism to identify \(\K_*(A) = \K_*(B)\).
By Theorem~\ref{the:obstruction_class_Z}, the relative obstruction
class vanishes if and only if
\(\alpha^1 - \beta^1 \in \Ext\bigl(\K_{*+1}(A),\K_*(A)\bigr)\)
vanishes in \(\coker(\gamma)\).
Equivalently, there is \(t^1\in\Ext^1\bigl(\K_{1+*}(A),\K_*(B)\bigr)\)
for which \(t=t^0+t^1\in \KK_0(A,B)\)
satisfies \([\beta]\circ t = t\circ [\alpha]\).
Then~\(t\)
is an isomorphism \(F^\Z(A) \cong F^\Z(B)\)
in~\(\KKcat[\Z]\),
and Theorem~\ref{the:Z_classify} shows that such an isomorphism lifts
to an isomorphism in \(\KK^\Z_0(A,B)\).

Recall that Baaj--Skandalis duality is an equivalence of triangulated
categories \(\KKcat^\T \cong \KKcat^\Z\).
Hence everything said above about \(\Z\)\nb-actions
carries over to \(\T\)\nb-actions.
The functor \(A\mapsto \K_*(A)\)
becomes \(B\mapsto \K_*^\T(B)\)
on \(\KKcat^\T\),
equipped with the natural module structure over the representation
ring~\(\Z[x,x^{-1}]\)
of~\(\T\).
The automorphism~\(\alpha\)
is replaced by an automorphism of \(B\rtimes \T\),
namely, the generator~\(\beta\)
of the dual action of~\(\Z\).
Since \(\K_*(B\rtimes\T) \cong \K_*^\T(B)\),
the Universal Coefficient Theorem splits
\(\KK_0(B\rtimes\T,B\rtimes\T)\)
into the parity-preserving part
\(\Hom\bigl(\K_*^\T(B),\K_*^\T(B)\bigr)\)
and the parity-reversing part
\(\Ext\bigl(\K_{1+*}^\T(B),\K_*^\T(B)\bigr)\).
The obstruction class of \(B\rtimes\T\)
is the class of
\(-\beta^1 (\beta^0)^{-1} \in
\Ext\bigl(\K_{1+*}^\T(B),\K_*^\T(B)\bigr)\) in the cokernel of the map
\[
\gamma\colon \Ext\bigl(\K_{1+*}^\T(B),\K_*^\T(B)\bigr) \to
\Ext\bigl(\K_{1+*}^\T(B),\K_*^\T(B)\bigr),\qquad
x\mapsto \beta^0 x - x\beta^0.
\]
As above, a grading-preserving \(\Z[x,x^{-1}]\)-module
isomorphism \(t\colon \K_*^\T(A) \to \K_*^\T(B)\)
is compatible with the obstruction classes if and only if it lifts to
an isomorphism between \(A\rtimes\T\)
and~\(B\rtimes\T\)
in \(\KKcat[\Z]\),
and such an isomorphism lifts further to an isomorphism between
\(A\rtimes\T\)
and~\(B\rtimes\T\)
in \(\KKcat^\Z\).
By Baaj--Skandalis duality, the latter is equivalent to an invertible
element in \(\KK^\T_0(A,B)\).

\subsection{\texorpdfstring{$\Cst$}{C*}-Algebras over unique path spaces}
\label{sec:compare_ups}

Now we return to the setup of Section~\ref{sec:UCT_X}.  So \((X,\to)\)
is a countable directed graph with the unique path property.  Let~\(\preceq\)
be the partial order generated by~\(\leftarrow\)
and equip~\(X\)
with the Alexandrov topology defined by~\(\preceq\).
Let \(\KKcat^X\)
be the category of \(\Cst\)\nb-algebras
over~\(X\).
Objects in the appropriate bootstrap class in \(\KKcat^X\)
are classified in~\cite{Bentmann-Meyer:More_general} under the extra
assumption that~\(X\)
be finite.  Actually, the arguments
in~\cite{Bentmann-Meyer:More_general} work in the same way if the
set~\(X\)
is countable.  Here we treat this more general case right away.

Let~\(\Abel^X\)
be the category of all functors from~\(X\)
to the category of countable \(\Z/2\)-graded
Abelian groups.  Equivalently, an object of~\(\Abel^X\)
is a family of countable \(\Z/2\)\nb-graded
Abelian groups~\(G_x\)
for \(x\in X\)
with grading-preserving group homomorphisms
\(\gamma_{y,x}\colon G_x \to G_y\)
for all \(x,y\in X\)
with \(x\to y\).
Morphisms \((G_x,\gamma_{y,x}) \to (H_x,\eta_{y,x})\)
in~\(\Abel^X\)
are families of grading-preserving group homomorphisms
\(t_x\colon G_x \to H_x\)
that satisfy \(t_y\circ \gamma_{y,x} = \eta_{y,x}\circ t_x\)
for all \(x,y\in X\) with \(x\to y\).  We define the functor
\[
F^X_\K\colon \KKcat^X \to \Abel^X
\]
by mapping a \(\Cst\)\nb-algebra
over~\(X\)
to the diagram of \(\Z/2\)\nb-graded
Abelian groups \(\K_*(A(U_x))\)
for \(x\in X\)
with the maps induced by the inclusion maps \(A(U_x) \idealin A(U_y)\)
for \(x\to y\).
The target category~\(\Abel^X\)
is a paired, stable Abelian category, where
\(\Abel^X_\pm \subseteq \Abel^X\)
are the subcategories of \(\Z/2\)\nb-graded
groups where the odd or even part vanishes, respectively.
And~\(F^X_\K\)
is a stable homological functor.  Let \(\Ideal^X_\K\)
be its kernel on morphisms.  Equation~\eqref{eq:KKX_out_Fix2} implies
\[
\KK^X_0(i_z(\C),B)
\cong \KK_0\bigl(\C,B(U_z)\bigr)
\cong \K_0\bigl(B(U_z)\bigr)
\cong \Hom_{\Abel^X}\bigl(j_z(\C),F^X_\K(B)\bigr).
\]
Hence the partial adjoint~\((F^X_\K)^\lad\)
of~\(F^X_\K\)
is defined on \(j_z(\C)\)
for all \(z\in X\).
As in Section~\ref{sec:UCT_X}, any object of~\(\Abel^X\)
is a quotient of a direct sum of objects of the form \(j_z(\C)\)
for \(z\in X\).
Hence~\((F^X_\K)^\lad\)
is defined on all projective objects of~\(\Ideal^X_\K\)
and \(F^X_\K \circ (F^X_\K)^\lad(P)=P\)
for all projective objects~\(P\)
of~\(\Abel^X\);
this is proved like the corresponding statement about~\(F^\Z_\K\)
in Section~\ref{sec:Z-action}.  Therefore, the functor
\(F^X_\K\colon \KKcat^X \to \Abel^X\)
is the universal \(\Ideal^X_\K\)\nb-exact
stable homological functor into an exact category or into an Abelian
category by Theorem~\ref{the:characterise_universal_homological} and
Remark~\ref{rem:universal_to_exact}.  And~\(F^X_\K\)
restricts to an equivalence of categories between the
\(\Ideal^X_\K\)\nb-projective
objects in~\(\KKcat^X\)
and the projective objects in~\(\Abel^X\).
Let \(\Boot^X \subseteq\KKcat^X\)
be the localising subcategory generated by the
\(\Ideal^X_\K\)\nb-projective
objects.  This is the analogue of the bootstrap class in~\(\KKcat^X\).

\begin{lemma}
  \label{lem:Ext_2_over_X}
  Let \(G=(G_x,\gamma_{y,x})\)
  and \(H=(H_x,\eta_{y,x})\) be objects of~\(\Abel^X\).
  \begin{enumerate}
  \item There is a projective resolution for~\(G\)
    of length~\(2\) in~\(\Abel^X\).
  \item There is \(A\in\Boot^X\) with \(F^X_\K(A) \cong G\).
  \item Write \(\Ext\)
    for the \(\Ext^1\)
    of Abelian groups.  The group \(\Ext^2_{\Abel^X}(G,H)\)
    is naturally isomorphic to the cokernel of the map
    \begin{multline}
      \label{eq:cokernel_Ext_2_X}
      \prod_{x\in X} \Ext(G_x,H_x)
      \to \prod_{x\to y} \Ext(G_x,H_y),\\
      (t_x)_{x\in X} \mapsto
      (\eta_{y,x}\circ t_x - t_y\circ \gamma_{y,x})_{x\to y},
    \end{multline}
  \end{enumerate}
\end{lemma}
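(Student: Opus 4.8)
The plan is to reduce everything to a length-one presentation of objects of $\Abel^X$ that copies the resolution~\eqref{eq:projective_resolution_ups} of Theorem~\ref{the:projectives_ups}, with $\K$\nb-theory in place of $\KKcat$. For $z\in X$ and a countable $\Z/2$\nb-graded Abelian group $M$, let $j_z(M)\inOb\Abel^X$ be the diagram with $M$ at every vertex $x\preceq z$, zero elsewhere, and identity structure maps between the nonzero entries; then $j_z\bigl(\K_*(B)\bigr)=F^X_\K(i_z(B))$, and~\eqref{eq:KKX_out_Fix} has the analogue $\Hom_{\Abel^X}(j_z(M),H)\cong\Hom(M,H_z)$, natural in $M$ and $H$. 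The construction in the proof of Theorem~\ref{the:projectives_ups} uses only the additive structure and the unique path property of~$X$, so performing it inside $\Abel^X$ produces, for $G=(G_x,\gamma_{y,x})$, a sequence
\[
0\to\bigoplus_{x\to y}j_y(G_x)\xrightarrow{\iota}\bigoplus_{x\in X}j_x(G_x)\xrightarrow{\pi}G\to 0,
\]
in which the summand $j_y(G_x)$ maps to $j_x(G_x)\oplus j_y(G_y)$ by the arrow adjunct to $(\id_{G_x},-\gamma_{y,x})$. The contracting homotopies written down in that proof are honest group homomorphisms here, so $\iota$ and $\pi$ form a split exact sequence at every vertex; since exactness in $\Abel^X$ is pointwise, the displayed sequence is short exact.

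For part~(1): countable $\Z/2$\nb-graded Abelian groups have global dimension~$1$, so every $G_x$ has a length-one free resolution, and applying the exact, projective-preserving functor $j_z$ turns it into a length-one projective resolution of $j_z(G_x)$ in $\Abel^X$. Thus $L_0\defeq\bigoplus_{x\in X}j_x(G_x)$ and $L_1\defeq\bigoplus_{x\to y}j_y(G_x)$ have projective dimension at most~$1$, and the short exact sequence above then shows $G$ has projective dimension at most~$2$; explicitly, splicing the length-one resolutions of $L_1$ and $L_0$ along a lift of $\iota$ — the mapping-cone construction of~\eqref{eq:resolution_A} — gives a projective resolution of $G$ of length~$2$. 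Part~(2) follows immediately: $F^X_\K$ is the universal $\Ideal^X_\K$\nb-exact stable homological functor (established in the paragraph preceding the lemma), so \cite{Bentmann-Meyer:More_general}*{Lemma~2.4} provides $A\in\Boot^X$ with $F^X_\K(A)\cong G$.

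For part~(3): apply $\Hom_{\Abel^X}(-,H)$ to $0\to L_1\xrightarrow{\iota}L_0\to G\to 0$. As in part~(1), a free resolution $P_\bullet\to M$ of a $\Z/2$\nb-graded Abelian group gives a length-one projective resolution $j_z(P_\bullet)\to j_z(M)$ in $\Abel^X$; since $H\mapsto H_z$ is exact, this computes $\Ext^*_{\Abel^X}(j_z(M),H)$ as the cohomology of $\Hom(P_\bullet,H_z)$, so $\Ext^1_{\Abel^X}(j_z(M),H)\cong\Ext(M,H_z)$ and $\Ext^{\ge2}_{\Abel^X}(j_z(M),H)=0$. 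Hence $\Ext^{\ge2}_{\Abel^X}(L_0,H)=0$ and the long exact sequence gives $\Ext^2_{\Abel^X}(G,H)\cong\coker(\iota^*)$, where $\Ext^1_{\Abel^X}(L_0,H)\cong\prod_{x\in X}\Ext(G_x,H_x)$ and $\Ext^1_{\Abel^X}(L_1,H)\cong\prod_{x\to y}\Ext(G_x,H_y)$. It remains to identify $\iota^*$. A short computation shows that for $y\preceq x$ the canonical arrow $j_y(N)\to j_x(N)$ lifts to the evident chain map $j_y(P_\bullet)\to j_x(P_\bullet)$ between the associated projective resolutions, and therefore induces on $\Ext(N,H_\bullet)$ postcomposition with the structure map $\eta_{y,x}\colon H_x\to H_y$ of~$H$, whereas an arrow $j_y(M')\to j_y(M)$ corresponding to $f\colon M'\to M$ induces on $\Ext(-,H_y)$ precomposition with~$f$. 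Applied to the two components of $\iota$, with $f=-\gamma_{y,x}$ on the second, this yields $\iota^*\bigl((t_x)_{x\in X}\bigr)_{x\to y}=\eta_{y,x}\circ t_x-t_y\circ\gamma_{y,x}$, which is exactly the map~\eqref{eq:cokernel_Ext_2_X}. Naturality of the isomorphism in $G$ and $H$ is clear, since $L_0$, $L_1$ and $\iota$ are functorial in $G$ and all the identifications above are natural in $H$.

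I expect the last step to be the main obstacle: one must keep straight that arrows between $j$\nb-objects which lower the index act on $\Ext$ by postcomposition with the structure maps of~$H$, while those fixing the index act by precomposition with the corresponding map of coefficient groups, and one must get the signs so that the answer is~\eqref{eq:cokernel_Ext_2_X} rather than its negative. Everything else is routine homological bookkeeping in the functor category $\Abel^X$ together with the adjunction for $j_z$.
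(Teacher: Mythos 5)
Your proof is correct and follows essentially the same route as the paper: the short exact sequence $0\to L_1\to L_0\to G\to 0$ obtained by transporting~\eqref{eq:projective_resolution_ups} into $\Abel^X$, the adjunction $\Hom_{\Abel^X}(j_z(M),H)\cong\Hom(M,H_z)$, and the length-one resolutions of the summands $j_z(G_x)$ are exactly what the paper builds (it calls the functor $J_z$). The only divergence is in part (3), where you apply the long exact $\Ext$-sequence to $0\to L_1\to L_0\to G\to 0$ and use $\Ext^{\ge 2}_{\Abel^X}(L_0,H)=0$ to get $\Ext^2_{\Abel^X}(G,H)\cong\coker(\iota^*)$ directly, rather than writing out the spliced length-two resolution and computing $\Ext^2$ from it as the paper does — a standard dimension shift, same content, slightly less bookkeeping.
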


\begin{proof}
  Since \(\Ideal^X \subseteq \Ideal^X_\K\),
  any \(\Ideal^X\)\nb-exact
  chain complex in~\(\KKcat^X\)
  is also \(\Ideal^X_\K\)\nb-exact.
  In particular, the \(\Ideal^X\)\nb-projective
  resolution in~\eqref{eq:projective_resolution_ups} is
  \(\Ideal^X_\K\)\nb-exact.
  We claim that its entries have \(\Ideal^X_\K\)\nb-projective
  resolutions of length~\(1\).
  Hence there is a projective resolution of~\(F^X_\K(A)\)
  of length~\(2\)
  as in~\eqref{eq:resolution_A}.  To prove the same for all
  objects~\(G\)
  of~\(\Abel^X\), we carry over~\eqref{eq:projective_resolution_ups}.

  Let \(J_z(B)\inOb\Abel^\Z\)
  for a countable \(\Z/2\)\nb-graded
  Abelian group~\(B\)
  be the diagram with \(J_z(B)_x = B\)
  if \(z\succeq x\)
  and \(J_z(B)_x = 0\)
  otherwise, with the identity map \(J_z(B)_x \to J_z(B)_y\)
  for \(z\succeq x\to y\) and the zero map otherwise.  Then
  \begin{equation}
    \label{eq:Abel_X_jz_adjunction}
    \Hom_{\Abel^X}\bigl(J_z(B),H\bigr) \cong \Hom\bigl(B,H_z\bigr), 
  \end{equation}
  where the isomorphism simply restricts a morphism in~\(\Abel^X\)
  to the object \(z\in X\).
  In particular, if \(y\succeq z\),
  then \(J_y(B)_z = B\)
  and so there is a canonical map
  \(J_{y\succeq z}(B)\colon J_z(B) \to J_y(B)\)
  in~\(\Abel^X\).
  Explicitly, this map is the identity map on \(J_z(B)_x\)
  if \(z\succeq x\)
  and the zero map on \(J_z(B)_x=0\)
  if \(z\nsucceq x\).
  The proof that~\eqref{eq:projective_resolution_ups} is exact also
  proves the exactness of
  \begin{equation}
    \label{eq:projective_resolution_ups_Abel}
    0 \to
    \bigoplus_{x\to y} J_y(G_x)
    \xrightarrow{\psi} \bigoplus_{x\in X} J_x(G_x)
    \xrightarrow{q} G
    \to 0;
  \end{equation}
  here~\(\psi\)
  restricted to the summand \(J_y(G_x)\)
  for \(x\to y\)
  is the map \((J_{x\to y}(G_x),J_y(\gamma_{y,x}))\)
  to \(J_x(G_x) \oplus J_y(G_y)\),
  and~\(q\)
  maps \(J_x(G_x)\)
  to~\(G\)
  by the adjunct of the identity map on~\(G_x\)
  under the adjunction in~\eqref{eq:Abel_X_jz_adjunction}.
  Explicitly, the entry of \(\bigoplus_{x\in X} J_x(G_x)\)
  at \(z\in X\)
  is \(\bigoplus_{x\succeq z} G_x\),
  which is mapped to~\(G_z\)
  by \((\gamma_{z,x})_{x\succeq z}\).
  The proof that~\eqref{eq:projective_resolution_ups_Abel} is exact
  shows that the chain complexes of \(\Z/2\)\nb-graded Abelian groups
  \[
  0 \to
  \bigoplus_{x\to y} J_y(G_x)_z
  \xrightarrow{\psi_z} \bigoplus_{x\in X} J_x(G_x)_z
  \xrightarrow{q_z} G_z
  \to 0
  \]
  are contractible for all \(z\in X\).

  For each \(x\in X\), there is a resolution
  \begin{equation}
    \label{eq:resolve_Gx}
    P_{x,1} \xinto{d_{x,1}} P_{x,0} \xprto{d_{x,0}} G_x    
  \end{equation}
  of~\(G_x\)
  of length~\(1\)
  by countable free \(\Z/2\)\nb-graded
  Abelian groups.  The homomorphism \(\gamma_{y,x}\colon G_x \to G_y\)
  for \(x \to y\) lifts to a morphism of extensions
  \begin{equation}
    \label{eq:resolve_Gx_Gy}
    \begin{tikzcd}
      P_{x,1} \arrow[r, rightarrowtail, "d_{x,1}"] \arrow[d, "\gamma^1_{y,x}"]&
      P_{x,0} \arrow[r, twoheadrightarrow, "d_{x,0}"] \arrow[d, "\gamma^0_{y,x}"]&
      G_x \arrow[d, "\gamma_{y,x}"]\\
      P_{y,1} \arrow[r, rightarrowtail, "d_{y,1}"] &
      P_{y,0} \arrow[r, twoheadrightarrow, "d_{y,0}"] &
      G_y
    \end{tikzcd}
  \end{equation}
  The construction~\(J_z\)
  above is an exact functor, and it maps free Abelian groups to
  projective objects of~\(\Abel^X\)
  by the adjunction~\eqref{eq:Abel_X_jz_adjunction}.  Hence
  \begin{align*}
    \bigoplus_{x\to y} J_y(P_{x,1})
    \into \bigoplus_{x\to y} J_y(P_{x,0})
    \prto \bigoplus_{x\to y} J_y(G_x),\\
    \bigoplus_{x\in X} J_x(P_{x,1})
    \into \bigoplus_{x\in X} J_x(P_{x,0})
    \prto \bigoplus_{x\in X} J_sx(G_x)
  \end{align*}
  are projective resolutions of length~\(1\)
  in~\(\Abel^X\).
  The resolution in~\eqref{eq:projective_resolution_ups_Abel}
  gives a projective resolution of~\(G\)
  of length~\(2\)
  as in~\eqref{eq:resolution_A}.  This proves the first assertion.

  Now \cite{Bentmann-Meyer:More_general}*{Lemma 2.4} shows that there
  is \(A\inOb\Boot^X\)
  with \(G=F^X_\K(A)\);
  the property that \(\KK^X_0(A,B)=0\)
  for all \(\Ideal^X_\K\)\nb-contractible
  \(B\inOb\KKcat^X\) is equivalent to \(A\inOb\Boot^X\).

  The projective resolution of~\(G\) built above has the form
  \[
  0 \to
  \bigoplus_{x\to y} J_y(P_{x,1})
  \to \bigoplus_{x\to y} J_y(P_{x,0}) \oplus \bigoplus_{x\in X} J_x(P_{x,1})
  \to \bigoplus_{x\in X} J_x(P_{x,0})
  \to G,
  \]
  where the maps
  \(\bigoplus_{x\to y} J_y(P_{x,i}) \to \bigoplus_{x\in X}
  J_x(P_{x,i})\)
  for \(i=0,1\)
  use \(\gamma_{y,x}^i \colon P_{x,i} \to P_{y,i}\)
  in~\eqref{eq:resolve_Gx_Gy}.  We use this resolution to compute the
  group \(\Ext^2_{\Abel^X}(G,H)\).
  An element of \(\Ext^2_{\Abel^X}(G,H)\)
  is represented by a map \(\bigoplus_{x\to y} J_y(P_{x,1}) \to H\).
  By the adjunction~\eqref{eq:Abel_X_jz_adjunction}, this corresponds
  to a family of maps \(f_{x\to y}\colon P_{x,1} \to H_y\).
  This family represents~\(0\)
  in \(\Ext^2_{\Abel^X}(G,H)\)
  if and only if the corresponding map
  \(\bigoplus_{x\to y} J_y(P_{x,1}) \to H\)
  factors through
  \(\bigoplus_{x\to y} J_y(P_{x,0}) \oplus \bigoplus_{x}
  J_x(P_{x,1})\).
  Using the adjunction~\eqref{eq:Abel_X_jz_adjunction} again, a map on
  this direct sum corresponds to families of maps
  \(g_{x\to y}\colon P_{x,0} \to H_y\)
  and \(h_x\colon P_{x,1} \to H_x\).
  The resulting map \(\bigoplus_{x\to y} J_y(P_{x,1}) \to H\)
  corresponds to the family of maps
  \[
  -g_{x\to y} \circ d_{x,1} + \eta_{y,x} h_x - h_y \gamma_{y,x}^1\colon
  P_{x,1} \to H_y.
  \]
  The resolutions~\eqref{eq:resolve_Gx} compute \(\Ext(G_x,D)\)
  for any \(\Z/2\)\nb-graded
  Abelian group~\(D\).
  So each \(f_{x\to y}\colon P_{x,1} \to H_y\)
  represents an element of \(\Ext(G_x,H_y)\),
  and it represents the zero element if and only if it is of the form
  \(g_{x\to y} \circ d_{x,1}\)
  for some \(g_{x\to y}\colon P_{x,0} \to H_y\).
  The elements~\(h_x\)
  above represent elements of \(\Ext(G_x,H_x)\).
  If they represent the zero element of \(\Ext(G_x,H_x)\),
  then the term \(\eta_{y,x} h_x - h_y \gamma_{y,x}^1\)
  above may be rewritten in the form \(-g_{x\to y} \circ d_{x,1}\).
  Now we get the formula for \(\Ext^2_{\Abel^X}(G,H)\)
  in the third statement in the lemma.
\end{proof}

\begin{theorem}
  \label{the:classification_X_obstruction}
  Let \(A\)
  and~\(B\)
  belong to~\(\Boot^X\).
  An isomorphism \(t\colon F^X_\K(A) \congto F^X_\K(B)\)
  in~\(\Abel^X\)
  lifts to an invertible element in \(\KK^X_0(A,B)\)
  if and only if \(t\delta_A = \delta_B t\)
  holds in \(\Ext^2_{\Abel^X}\bigl(\Sigma F^X_\K(A),F^X_\K(B)\bigr)\),
  where \(\delta_A\)
  and~\(\delta_B\) are the obstruction classes of \(A\) and~\(B\).
\end{theorem}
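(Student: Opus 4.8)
The plan is to transport the argument given after Theorem~\ref{the:obstruction_class_Z} for $\KKcat^\Z$ to the present setting, with Corollary~\ref{cor:classify_KKX} in place of Theorem~\ref{the:Z_classify} and Lemma~\ref{lem:Ext_2_over_X} in place of the $\Ext^2$-computation from \cite{Bentmann-Meyer:More_general}. First I check that $\delta_A$ and $\delta_B$ are defined and compute them with Theorem~\ref{the:compute_obstruction}. Its hypotheses hold with $\Tri=\KKcat^X$, $\Ideal=\Ideal^X_\K$, $F=F^X_\K$ and $\Abel=\Abel^X$: the functor $F^X_\K$ is the universal $\Ideal^X_\K$-exact stable homological functor and has enough projectives, and $\Abel^X$ is paired. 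Since $\Ideal^X\subseteq\Ideal^X_\K$, the $\Ideal^X$-projective resolution~\eqref{eq:projective_resolution_ups} of $A$, lifted canonically to $\KKcat^X$, is $\Ideal^X_\K$-exact; as $A\in\Boot^X$ each $A(U_x)$ lies in the bootstrap class, so its entries $B_1=\bigoplus_{x\to y} i_y(A(U_x))$ and $B_0=\bigoplus_{x\in X} i_x(A(U_x))$ lie in $\gen{\Proj_{\Ideal^X_\K}}$ and have $\Ideal^X_\K$-projective resolutions of length~$1$ --- these are built as in the proof of Lemma~\ref{lem:Ext_2_over_X} and lift to $\KKcat^X$ because $(F^X_\K)^\lad$ is defined on projective objects. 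Hence~\eqref{eq:projective_resolution_ups} serves as the triangle~\eqref{eq:triangle_B1B0A}, and Theorem~\ref{the:compute_obstruction} gives $\delta_A=\partial\bigl(F^X_\K(\pi)\circ\iota^1\bigr)$, where $\iota^1$ is the parity-reversing part of the connecting map $\iota$ of~\eqref{eq:projective_resolution_ups}.

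The next step unwinds this, exactly as in the proof of Theorem~\ref{the:obstruction_class_Z}. After splitting $A(U_x)\cong A(U_x)^+\oplus A(U_x)^-$, the restriction of $\iota$ to the summand $i_y(A(U_x))$ is the adjunct of $(\id,-\alpha_{y,x})\colon A(U_x)\to A(U_x)\oplus A(U_y)$, so $\iota^1$ restricted to this summand is the adjunct of $(0,-\alpha_{y,x}^1)$, where $\alpha_{y,x}^1\in\Ext\bigl(\K_{1+*}(A(U_x)),\K_*(A(U_y))\bigr)$ is the parity-reversing part of the $\KK$-class of the inclusion $A(U_x)\hookrightarrow A(U_y)$. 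Tracing this through the adjunctions~\eqref{eq:KK_out_ix}, \eqref{eq:KKX_out_Fix2} and through the boundary map $\partial$ computed with the explicit length-$2$ resolution from the proof of Lemma~\ref{lem:Ext_2_over_X} --- under which, as in the proof of Theorem~\ref{the:obstruction_class_Z}, $\partial$ becomes the quotient map onto the cokernel~\eqref{eq:cokernel_Ext_2_X} --- one finds that, for a suitable normalisation of the isomorphism between the cokernel~\eqref{eq:cokernel_Ext_2_X} and $\Ext^2_{\Abel^X}$ as in the remark after Theorem~\ref{the:obstruction_class_Z}, $\delta_A$ is represented by the family $(\alpha_{y,x}^1)_{x\to y}$, and $\delta_B$ by the family $(\beta_{y,x}^1)_{x\to y}$.

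Now I complete the off-diagonal analysis left open at the end of Section~\ref{sec:UCT_X}. Given an isomorphism $t=(t_x)\colon F^X_\K(A)\congto F^X_\K(B)$ in $\Abel^X$, lift each $t_x$ by the Universal Coefficient Theorem for $\KK$ to a parity-preserving $\KK$-equivalence $t_x^0\in\KK_0(A(U_x),B(U_x))$ with $\K_*$-part $t_x$. By the matrix bookkeeping indicated at the end of Section~\ref{sec:UCT_X}, the family $(t_x^0+t_x^1)_{x\in X}$ defines an isomorphism $F^X(A)\cong F^X(B)$ in $\KKcat[X]$, for a suitable parity-reversing correction $t_x^1\in\Ext\bigl(\K_{1+*}(A(U_x)),\K_*(B(U_x))\bigr)$, if and only if the parity-reversing equations
\[
  \beta_{y,x}^0\circ t_x^1 - t_y^1\circ\alpha_{y,x}^0
  = t_y^0\circ\alpha_{y,x}^1 - \beta_{y,x}^1\circ t_x^0
  \qquad(x\to y)
\]
admit a solution $(t_x^1)_{x\in X}$; the parity-preserving equations hold automatically because $t$ is a morphism of $\Abel^X$, and each $t_x^0$ is a $\KK$-equivalence. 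Such a solution exists if and only if the right-hand family vanishes in the cokernel of the map on the left-hand side, which by Lemma~\ref{lem:Ext_2_over_X}(3), applied to $G=\Sigma F^X_\K(A)$ and $H=F^X_\K(B)$, is $\Ext^2_{\Abel^X}\bigl(\Sigma F^X_\K(A),F^X_\K(B)\bigr)$; and by the previous step that right-hand family represents $t\delta_A-\delta_B t$ in this group. Hence the solution exists if and only if $t\delta_A=\delta_B t$. When it does, the resulting isomorphism in $\KKcat[X]$ lifts to an invertible element of $\KK^X_0(A,B)$ by Corollary~\ref{cor:classify_KKX}. Conversely, an invertible element of $\KK^X_0(A,B)$ inducing $t$ on $\K$-theory maps under $F^X$ to an isomorphism in $\KKcat[X]$, whence the parity-reversing equations are solvable and $t\delta_A=\delta_B t$.

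The main obstacle is the bookkeeping in the last two steps. Three a priori different presentations of the same $\Ext^2$-group occur --- the one coming from the canonical length-$2$ resolution~\eqref{eq:resolution_A} used by Theorem~\ref{the:compute_obstruction}, the cokernel~\eqref{eq:cokernel_Ext_2_X} of Lemma~\ref{lem:Ext_2_over_X}, and the cokernel of the map appearing in the parity-reversing equations --- and one has to match the identifications between them compatibly, tracking all sign and orientation conventions, just as in the $\Z$-case the obstruction class came out as $-\alpha^1(\alpha^0)^{-1}$ rather than $\alpha^1$, the two differing by the automorphism induced by $-\Sigma\alpha^0$. Once these identifications are pinned down, both implications above are immediate.
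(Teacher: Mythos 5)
Your proof is correct, but it takes a genuinely different route from the paper's. The paper proves Theorem~\ref{the:classification_X_obstruction} in a single line: Lemma~\ref{lem:Ext_2_over_X} shows that $\Abel^X$ has projective resolutions of length~$2$ and that every object lifts to $\Boot^X$, so the classification machinery of~\cite{Bentmann-Meyer:More_general} applies verbatim. You instead \emph{deduce} the theorem from Corollary~\ref{cor:classify_KKX} (the UCT for $F^X\colon\KKcat^X\to\KKcat[X]$) combined with the computation of obstruction classes via Theorem~\ref{the:compute_obstruction}, i.e.\ you run the comparison argument that the paper presents only afterwards (around equation~\eqref{eq:t_condition_X}) as an a posteriori reconciliation of the two classification theorems, and you promote that comparison to a proof of one from the other. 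Your route is longer and requires the explicit representative $(\alpha^1_{y,x})_{x\to y}$ of the obstruction class (i.e.\ essentially Theorem~\ref{the:obstruction_class_X}) before the classification theorem is even established, but there is no circularity since that computation depends only on Lemma~\ref{lem:Ext_2_over_X} and Theorem~\ref{the:compute_obstruction}; what your approach buys is that the classification by $F^X_\K$ and the $\Ext^2$-obstruction becomes a corollary of the sharper $F^X$-based UCT rather than a separate instance of the general framework, making the logical dependence on~\cite{Bentmann-Meyer:More_general} thinner (you only need the \emph{definition} of the obstruction class, not the classification theorem from that paper). The remaining bookkeeping you flag --- matching the canonical resolution presentation of $\Ext^2$, the cokernel presentation of Lemma~\ref{lem:Ext_2_over_X}(3), and the solvability cokernel of the parity-reversing equations, with consistent signs --- is exactly the content of the paper's proof of Theorem~\ref{the:obstruction_class_X} and the discussion following it, so it is honest to flag it but it does close.
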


\begin{proof}
  The lemma verifies all the conditions to apply the classification
  method of~\cite{Bentmann-Meyer:More_general}.
\end{proof}

The proof of the lemma also gives all the ingredients needed in
Section~\ref{sec:triangulated}.  So we may now compute obstruction
classes:

\begin{theorem}
  \label{the:obstruction_class_X}
  Let~\(B\)
  be an object of~\(\Boot^X\).
  Let \(B_x \defeq B(U_x)\)
  for \(x\in X\).
  Let \(\beta_{y,x} \in \KK_0(B_x,B_y)\)
  be the \(\KK\)\nb-class
  of the inclusion map \(B_x \hookrightarrow B_y\).
  Split \(\beta_{y,x} = \beta_{y,x}^0 + \beta_{y,x}^1\)
  with a parity-preserving part
  \(\beta_{y,x}^0 \in \Hom\bigl(\K_*(B_x),\K_*(B_y)\bigr)\)
  and a parity-reversing part
  \(\beta_{y,x}^1 \in \Ext\bigl(\K_{1+*}(B_x),\K_*(B_y)\bigr)\).
  The obstruction class of~\(B\)
  is the class in the cokernel of the map
  in~\eqref{eq:cokernel_Ext_2_X} that is represented by
  \[
  (\beta_{y,x}^1)_{x\to y} \in \prod_{x\to y} \Ext\bigl(\K_{1+*}(B_x),\K_*(B_y)\bigr).
  \]
\end{theorem}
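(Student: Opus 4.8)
The plan is to deduce this from Theorem~\ref{the:compute_obstruction}, in the same way as Theorem~\ref{the:obstruction_class_Z} was deduced, but in the somewhat simpler situation where the relevant length-one resolution is built directly from the inclusion maps \(\beta_{y,x}\) rather than from a conjugated version of them (so no factor of \((\beta^0)^{-1}\) will appear). First I would set up the data required in Section~\ref{sec:triangulated}: take \(\Tri = \KKcat^X\), \(\Ideal = \Ideal^X_\K\) and \(F = F^X_\K\). The exact, \(\Ideal^X_\K\)\nb-exact triangle~\eqref{eq:triangle_B1B0A} is the lift to \(\KKcat^X\) of the resolution~\eqref{eq:projective_resolution_ups} along~\(F^X\), namely
\[
\bigoplus_{x\to y} i_y(B_x) \xrightarrow{\iota}
\bigoplus_{x\in X} i_x(B_x) \xrightarrow{\pi} B,
\]
which is \(\Ideal^X\)\nb-exact by Theorem~\ref{the:projectives_ups} and hence \(\Ideal^X_\K\)\nb-exact because \(\Ideal^X \subseteq \Ideal^X_\K\). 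The proof of Lemma~\ref{lem:Ext_2_over_X} supplies what is still missing: the two outer terms lie in~\(\Boot^X\) and have \(\Ideal^X_\K\)\nb-projective resolutions of length~\(1\), obtained by applying \((F^X_\K)^\lad\) to free resolutions of the graded groups \(\K_*(B_x)\). So Theorem~\ref{the:compute_obstruction} applies and computes the obstruction class of~\(B\) as \(\partial\bigl(F^X_\K(\pi)\circ\varphi^1\bigr)\).

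The next step is to identify~\(\varphi^1\). Splitting \(F^X_\K\bigl(\bigoplus_x i_x(B_x)\bigr)\) and \(F^X_\K\bigl(\bigoplus_{x\to y} i_y(B_x)\bigr)\) into their even and odd parts and lifting (possible since these objects have length-one \(\Ideal^X_\K\)\nb-projective resolutions) gives parity decompositions of \(B_0\) and~\(B_1\); concretely these come from the UCT splittings \(B(U_x) \cong B(U_x)^+ \oplus B(U_x)^-\), which make sense because \(B \in \Boot^X\) forces each \(B(U_x)\) into the bootstrap class. On the summand indexed by an edge \(x\to y\), the component of~\(\iota\) into \(i_x(B_x)\) is \(\id_{B_x}\) and hence parity-preserving, while the component into \(i_y(B_y)\) is, under the adjunction~\eqref{eq:KK_out_ix}, the class \(\pm\beta_{y,x}\). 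Therefore \(\varphi^1\) is supported on these latter components, where it is the adjunct of \(\pm\beta_{y,x}^1\). Using the adjunction~\eqref{eq:Abel_X_jz_adjunction} to identify \(\Ext^1_{\Abel^X}\bigl(\Sigma F^X_\K(B_1),F^X_\K(B)\bigr) \cong \prod_{x\to y} \Ext\bigl(\K_{1+*}(B_x),\K_*(B_y)\bigr)\), one reads off that \(F^X_\K(\pi)\circ\varphi^1\) corresponds to the family \((\pm\beta_{y,x}^1)_{x\to y}\).

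Finally I would compute the boundary map~\(\partial\) using the length-two projective resolution of \(F^X_\K(B)\) built in the proof of Lemma~\ref{lem:Ext_2_over_X}; with respect to that resolution, \(\Ext^2_{\Abel^X}\bigl(\Sigma F^X_\K(B),F^X_\K(B)\bigr)\) is precisely the cokernel of~\eqref{eq:cokernel_Ext_2_X}, and the boundary map \(\partial \colon \Ext^1_{\Abel^X}\bigl(\Sigma F^X_\K(B_1),F^X_\K(B)\bigr) \to \Ext^2_{\Abel^X}\bigl(\Sigma F^X_\K(B),F^X_\K(B)\bigr)\) degenerates to the quotient map onto that cokernel, exactly as the analogous boundary map becomes trivial at the end of the proof of Theorem~\ref{the:obstruction_class_Z}. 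Hence the obstruction class of~\(B\) is the image of \((\beta_{y,x}^1)_{x\to y}\) in that cokernel; a stray sign is irrelevant since it can be absorbed by the automorphism of \(\Ext^2_{\Abel^X}\bigl(\Sigma F^X_\K(B),F^X_\K(B)\bigr)\) induced by \(-\id\) on \(\Sigma F^X_\K(B)\), as in the remark following Theorem~\ref{the:obstruction_class_Z}.

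The main obstacle is bookkeeping rather than anything conceptual: one must line up two parallel constructions of the length-two projective resolution of \(F^X_\K(B)\) — the one produced abstractly inside the proof of Theorem~\ref{the:compute_obstruction} from the length-one resolutions of \(F^X_\K(B_0)\) and \(F^X_\K(B_1)\), and the explicit one in the proof of Lemma~\ref{lem:Ext_2_over_X} — so that the boundary map really is the quotient map and the class of \(F^X_\K(\pi)\circ\varphi^1\) is literally the family \((\pm\beta_{y,x}^1)_{x\to y}\); keeping the many direct-sum indices and the adjunctions \(i_z \dashv \mathrm{ev}_z\) straight is where some care is needed, but all of this is already carried out in Lemma~\ref{lem:Ext_2_over_X} and in the treatment of \(\KKcat^\Z\), so nothing genuinely new should arise.
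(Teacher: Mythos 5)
Your proposal matches the paper's proof in every essential respect: lift the \(\Ideal^X\)\nb-projective resolution~\eqref{eq:projective_resolution_ups} to a triangle in \(\KKcat^X\), apply Theorem~\ref{the:compute_obstruction} using the length-one \(\Ideal^X_\K\)\nb-projective resolutions supplied by Lemma~\ref{lem:Ext_2_over_X}, read off the parity-reversing part \(\varphi^1\) componentwise via the adjunction~\eqref{eq:KK_out_ix} after splitting each \(B(U_x)\) into its even and odd parts, and observe that with the description of \(\Ext^2_{\Abel^X}\) in Lemma~\ref{lem:Ext_2_over_X} the connecting map is just the projection onto the cokernel of~\eqref{eq:cokernel_Ext_2_X}. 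The one small imprecision is that you leave the sign as \(\pm\beta_{y,x}^1\) and argue it away by an automorphism of \(\Sigma F^X_\K(B)\); the paper fixes the sign to \(+\beta_{y,x}^1\) by tracking the conventions in~\eqref{eq:projective_resolution_ups} directly, though the paper itself acknowledges (after Theorem~\ref{the:obstruction_class_Z}) that such a sign carries no extra information.
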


\begin{proof}
  The projective resolution~\eqref{eq:projective_resolution_ups}
  in~\(\KKcat[X]\) lifts to an exact triangle
  \[
  \bigoplus_{x\to y} i_y(B_x) \xrightarrow{\varphi}
  \bigoplus_{x\in X} i_x(B_x) \xrightarrow{p}
  B \to \Sigma \bigoplus_{x\to y} i_y(B_x)
  \]
  in \(\KKcat^X\)
  by \cite{Bentmann-Meyer:More_general}*{Proposition 2.3}.  This is
  how the Universal Coefficient Theorem in
  \cite{Meyer-Nest:Homology_in_KK}*{Theorem~66} is proved.  Here the
  map~\(\varphi\)
  restricted to the summand~\(i_y(B_x)\)
  is the difference of two maps: the map \(i_y(\beta_{y,x})\)
  to~\(i_y(B_y)\)
  and the canonical map \(i_{x,y}(B_x)\colon i_y(B_x) \to i_x(B_x)\)
  that is the adjunct of the identity map \(B_x \to i_x(B_x)_y = B_x\)
  under the adjunction~\eqref{eq:KK_out_ix}.  And the map~\(p\)
  restricted to~\(i_x(B_x)\)
  is the adjunct of the identity map \(B_x \to B_x\)
  under the adjunction~\eqref{eq:KK_out_ix}.

  Both \(\bigoplus_{x\to y} i_y(B_x)\)
  and \(\bigoplus_{x\in X} i_x(B_x)\)
  belong to~\(\Boot^X\)
  and have \(\Ideal^X_\K\)\nb-projective
  resolutions of length~\(1\)
  (see~\eqref{eq:resolve_Gx}).  So we are in the situation
  of~\eqref{eq:triangle_B1B0A}.  We split each \(B_x \defeq B(U_x)\)
  into its even and odd parity part \(B_x = B_x^+ \oplus B_x^-\).
  Then \(i_y(B_x^+)\)
  and \(i_y(B_x^-)\)
  are of even or odd parity, respectively.  Split
  \(\beta_{y,x} = \beta_{y,x}^0 + \beta_{y,x}^1\)
  into a parity-preserving and a parity-reversing part.  So
  \(\beta_{y,x}^0 \in \Hom\bigl(\K_*(B_x),\K_*(B_y)\bigr)\)
  and \(\beta_{y,x}^1 \in \Ext\bigl(\K_{1+*}(B_x),\K_*(B_y)\bigr)\)
  by the Universal Coefficient Theorem for \(\KK\),
  see the discussion after Theorem~\ref{the:Z_classify}.  The induced
  maps \(i_y(\beta_{y,x}^0)\)
  and \(i_y(\beta_{y,x}^1)\)
  are parity-preserving and parity-reversing, respectively.  And the
  map \(i_{x,y}(B_x)\)
  is parity-preserving.  So the parity-reversing part~\(\varphi^1\)
  of~\(\varphi\)
  is the map that restricts to
  \(i_y(\beta_{y,x}^1)\colon i_y(B_x) \to i_y(B_y)\)
  on the summand for \(x\to y\).
  The composite \(p\circ \varphi^1\)
  is the map \(\bigoplus_{x\to y} i_y(B_x) \to B\)
  whose restriction to the summand \(i_y(B_x)\)
  is adjunct to \(\beta_{y,x}^1\colon B_x \to B_y\).
  These maps define an element of
  \(\prod_{x\to y} \Ext\bigl(\K_{1+*}(B_x),\K_*(B_y)\bigr)\).
  The obstruction class of~\(B\)
  is its image under the boundary map to
  \(\Ext^2_{\Abel^X}\bigl( \Sigma F^X_\K(B),F^X_\K(B)\bigr)\)
  by Theorem~\ref{the:compute_obstruction}.  We have described
  \(\Ext^2_{\Abel^X}(G,H)\)
  in Lemma~\ref{lem:Ext_2_over_X} in such a way that this boundary map
  becomes tautological: it simply maps an element of
  \(\prod_{x\to y} \Ext(G_x,H_y)\)
  to its class in the cokernel of the map
  in~\eqref{eq:cokernel_Ext_2_X}.  In particular, the obstruction
  class of~\(B\)
  is represented by
  \((\beta_{y,x}^1)_{x\to y} \in \prod_{x\to y}
  \Ext\bigl(\K_{1+*}(B_x),\K_*(B_y)\bigr)\).
\end{proof}

Theorem~\ref{the:obstruction_class_X} allows us to compare the
classification theorems for \(\Cst\)\nb-algebras
in~\(\Boot^X\)
that use the invariant \(F^X(B)\)
or \(F^X_\K(B)\)
with the obstruction class.  Let \(A,B\inOb\Boot^X\).
An isomorphism
\[
t^0\colon F^X_\K(A) \congto F^X_\K(B)
\]
is equivalent to a family of isomorphisms
\[
t^0_x\colon \K_*(A(U_x)) \congto \K_*(B(U_x))
\]
that make the diagrams
\[
\begin{tikzcd}
  \K_*(A(U_x)) \arrow[d, "\K_*(\alpha_{y,x})"]
  \arrow[r, "t^0_x", "\cong"'] &
  \K_*(B(U_x)) \arrow[d, "\K_*(\beta_{y,x})"] \\
  \K_*(A(U_y)) \arrow[r, "t^0_y", "\cong"'] &
  \K_*(B(U_y))
\end{tikzcd}
\]
commute for all edges \(x\to y\).
Here the maps \(\K_*(\alpha_{y,x})\)
and \(\K_*(\beta_{y,x})\)
are induced by the inclusion maps of our \(\Cst\)\nb-algebras
over~\(X\).
The obstruction class for the isomorphism~\((t^0_x)_{x\in X}\)
vanishes if and only if there are
\(t^1_x\in \Ext\bigl(\K_{1+*}(A(U_x)),\K_*(B(U_x))\bigr)\)
for \(x\in X\) such that
\begin{equation}
  \label{eq:t_condition_X}
  t^1_y \circ \K_*(\alpha_{y,x}) - \K_*(\beta_{y,x}) \circ t^1_x
  = \beta_{y,x}^1 \circ t^0_x - t^0_y \circ \alpha_{y,x}^1
\end{equation}
for all edges \(x\to y\).
Here
\[
\alpha_{y,x}^1 \in \Ext\bigl(\K_{1+*}(A(U_x)),\K_*(A(U_y))\bigr),\qquad
\beta_{y,x}^1\in \Ext\bigl(\K_{1+*}(B(U_x)),\K_*(B(U_y))\bigr)
\]
are the parity-reversing parts of the \(\KK\)\nb-classes
of the inclusion maps.  The elements \(t^0_x\)
and~\(t^1_x\)
together form \(t_x\in \KK_0(A(U_x),B(U_x))\).
And~\eqref{eq:t_condition_X} is equivalent to
\(t_y\circ \alpha_{y,x} = \beta_{y,x} \circ t_x\)
in \(\KK_0(A(U_x),B(U_y))\).

Corollary~\ref{cor:classify_KKX} says that any family of
\(\KK\)-equivalences
\(t_x\in \KK_0(A(U_x),B(U_x))\)
with \(t_y \circ \alpha_{y,x} = \beta_{y,x} \circ t_x\)
for all edges \(x\to y\)
lifts to an invertible element in \(\KK^X_0(A,B)\).
The classification using the obstruction class says that a family of
isomorphisms \(t^0_x\colon \K_*(A(U_x)) \congto \K_*(B(U_x))\)
lifts to an invertible element in \(\KK^X_0(A,B)\)
if and only if there exist~\(t^1_x\)
satisfying~\eqref{eq:t_condition_X}.  Corollary~\ref{cor:classify_KKX}
makes a slightly stronger assertion because it says that any choice of
elements~\(t^1_x\)
satisfying~\eqref{eq:t_condition_X} may be realised by an invertible
element in \(\KK^X_0(A,B)\).

The result in Corollary~\ref{cor:classify_KKX} about the existence of
liftings is also slightly stronger than the corresponding result using
the invariant~\(F^X_\K\)
and the obstruction class because it does not require the
\(\Cst\)\nb-algebras~\(A(U_x)\) to belong to the bootstrap class.

\section{Filtrated K-theory for totally ordered spaces}
\label{sec:filtrated_K}

Now we consider the special unique path space
\[
X = \{1 \leftarrow 2 \leftarrow \dotsb \leftarrow n\}
\]
for some \(n\in\N_{\ge2}\).
We are going to compare the classification for \(\Cst\)\nb-algebras
over~\(X\)
that follows from the Universal Coefficient Theorem
in~\cite{Meyer-Nest:Filtrated_K} to the classifications in
Section~\ref{sec:compare_ups}.  The invariant used
in~\cite{Meyer-Nest:Filtrated_K} is filtrated \(\K\)\nb-theory.
This is the diagram of \(\K\)\nb-theory
groups formed by \(\K_*(A(S))\)
for all locally closed subsets \(S\subseteq X\).
Here a subset is locally closed if and only if it is of the form
\[
[a,b] \defeq \{a,a+1,\dotsc,b-1,b\},\qquad
1\le a \le b \le n.
\]
The maps in the filtrated \(\K\)\nb-theory
diagram are those that come from natural transformations.  We are
going to describe these below.

Let~\(\Ideal^X_\mathrm{fil}\)
be the kernel on morphisms of the filtrated \(\K\)\nb-theory
functor, that is, \(f\in\KK^X_0(A,B)\)
belongs to \(\Ideal^X_\mathrm{fil}(A,B)\)
if and only if it induces the zero map
\(\K_*(A[a,b]) \to \K_*(B[a,b])\)
for all \(1 \le a \le b \le n\).
This is a stable homological ideal.  It is shown
in~\cite{Meyer-Nest:Homology_in_KK} that any object \(A\inOb\Boot^X\)
has an \(\Ideal^X_\mathrm{fil}\)\nb-projective resolution
\begin{equation}
  \label{eq:fil_K_resolution}
  0 \to P_1 \xrightarrow{\varphi} P_0 \xrightarrow{f} A
\end{equation}
of length~\(1\).
Then it follows that~\(A\)
is isomorphic to the cone of~\(\varphi\).
And there is a Universal Coefficient Theorem for objects
of~\(\Boot^X\)
based on Hom and Ext groups between their filtrated \(\K\)\nb-theory
diagrams.  Thus any isomorphism between the filtrated \(\K\)\nb-theory
diagrams of \(A,B\inOb\Boot^X\)
lifts to an equivalence in \(\KK^X_0(A,B)\).

The minimal open subset~\(U_x\)
containing~\(x\)
is \([x,n]\)
for each \(x\in X\).
Since \(\K_*(A(U_x))\)
is part of the filtrated \(\K\)\nb-theory
of~\(A\),
the ideal~\(\Ideal^X_\mathrm{fil}\)
is contained in the ideal~\(\Ideal^X_\K\)
that is used above for a general unique path space.  So any
\(\Ideal^X_\mathrm{fil}\)\nb-exact
chain complex is also \(\Ideal^X_\K\)\nb-exact.
If \(P\inOb\Boot^X\)
is \(\Ideal^X_\mathrm{fil}\)\nb-projective,
then the Abelian groups \(\K_*(P(S))\)
are free for all locally closed subsets \(S\subseteq X\)
(this follows from \cite{Meyer-Nest:Filtrated_K}*{Theorem~3.12} and
will become manifest below).  The computation of \(\Ext^2_{\Abel^X}\)
in Lemma~\ref{lem:Ext_2_over_X} shows that
\(\Ext^2_{\Abel^X}(F^X_\K(P),D)=0\)
for all \(D\inOb\Abel^X\)
if \(P(U_x)\)
is free for each \(x\in X\).
Hence \(P_1\)
and~\(P_0\)
have \(\Ideal^X_\K\)\nb-projective
resolutions of length~\(1\).
So we are in the situation of~\eqref{eq:triangle_B1B0A} and
Theorem~\ref{the:compute_obstruction} computes the obstruction class
from the parity-reversing part of the map~\(\varphi\)
in~\eqref{eq:fil_K_resolution}.  This computation is, however, quite
non-trivial.  We must first recall how the natural transformations in
the filtrated \(\K\)\nb-theory
diagram look like.  Going beyond the results
of~\cite{Meyer-Nest:Filtrated_K}, we then build an explicit
\(\Ideal^X_\mathrm{fil}\)\nb-projective
resolution.  Next, we observe which parts of the map~\(\varphi\)
are parity-reversing.  This gives a class in \(\Ext^2_{\Abel^X}\).
To translate it into the setting of
Theorem~\ref{the:obstruction_class_X}, we still have to compare the
resolution used there with the one coming
from~\eqref{eq:fil_K_resolution}.  This requires most of the work.


We first recall the description of the \(\Z/2\)\nb-graded
Abelian groups \(\mathcal{NT}_*([a,b],[c,d])\)
of natural transformations \(\K_*(A([a,b])) \to \K_*(A([c,d]))\)
in~\cite{Meyer-Nest:Filtrated_K}.
Let \(1\le a \le b \le n\).
The functor \(\KKcat^X\to\Ab^{\Z/2}\),
\(A\mapsto \K_*(A([a,b]))\),
is represented by an object~\(\mathcal{R}_{[a,b]}\)
of~\(\KKcat^X\),
which is described in~\cite{Meyer-Nest:Filtrated_K}.  Let
\(1\le a\le b \le n\)
and \(1\le c\le d \le n\).
By the Yoneda Lemma,
\[
\mathcal{NT}_*([a,b],[c,d])
\cong \KK^X_*(\mathcal{R}_{[c,d]},\mathcal{R}_{[a,b]})
\cong \K_*(\mathcal{R}_{[a,b]}([c,d])).
\]
These groups are computed in
\cite{Meyer-Nest:Filtrated_K}*{Equation~(3.1)}:
\begin{equation}
  \label{eq:NT_explicit}
  \mathcal{NT}_*([a,b],[c,d]) \cong
  \begin{cases}
    \Z_+&\text{if }c\le a \le d \le b,\\
    \Z_-&\text{if }a+1 \le c \le b+1 \le d,\\
    0&\text{otherwise.}
  \end{cases}
\end{equation}
We write \([a,b] \to [c,d]\)
in the first two cases, that is, when there is a non-zero natural
transformation in \(\mathcal{NT}_*([a,b],[c,d])\).
The groups \(\mathcal{NT}_*([a,b],[c,d])\)
form a \(\Z/2\)\nb-graded
ring~\(\mathcal{NT}\).
The filtrated \(\K\)\nb-theory
of a separable \(\Cst\)\nb-algebra
over~\(X\)
is a \(\Z/2\)\nb-graded,
countable module over it, which we denote by \(FK(A)\).

The computations in~\cite{Meyer-Nest:Filtrated_K} interpret the
elements of \(\mathcal{NT}_*([a,b],[c,d])\)
as follows.  Let~\(A\)
be a separable \(\Cst\)\nb-algebra
over~\(X\)
and let \(M \defeq FK(A)\).
If \(a< b \le c\),
then \([b,c]\)
is relatively open in \([a,c]\)
with complement \([a,b-1]\).
This induces the following natural six-term exact sequence in
\(\K\)\nb-theory:
\begin{equation}
  \label{eq:exact_module_NT}
  \dotsb \to M[b,c] \xrightarrow{i} M[a,c] \xrightarrow{r} M[a,b-1]
  \xrightarrow[\mathrm{odd}]{\delta} M[b,c] \to \dotsb
\end{equation}
where the maps \(i,r\)
preserve the \(\Z/2\)\nb-grading
and~\(\delta\)
reverses it.  Any natural transformation \(M[a,b] \to M[c,d]\)
is an integer multiple of a product of the maps \(i,r,\delta\)
above.  More precisely, if \(c\le a \le d \le b\),
then there is a commuting square
\begin{equation}
  \label{def:tautr_even}
  \begin{tikzcd}
    M[a,b] \arrow[r,"i"] \arrow[d, "r"] \arrow[dr, "\tautr{a,b}{c,d}" description] &
    M[c,b] \arrow[d, "r"] \\
    M[a,d] \arrow[r, "i"'] & M[c,d],
  \end{tikzcd}
\end{equation}
and its diagonal map~\(\tautr{a,b}{c,d}\)
generates \(\mathcal{NT}_0([a,b],[c,d]) \cong\Z\).
And if \(a+1 \le c \le b+1 \le d\), then there is a commuting square
\begin{equation}
  \label{def:tautr_odd}
  \begin{tikzcd}
    M[a,b] \arrow[r,"\delta"] \arrow[d, "r"] \arrow[dr, "\tautr{a,b}{c,d}" description] &
    M[b+1,d] \arrow[d, "i"] \\
    M[a,c-1] \arrow[r, "\delta"'] & M[c,d],
  \end{tikzcd}
\end{equation}
and its diagonal map~\(\tautr{a,b}{c,d}\)
generates \(\mathcal{NT}_1([a,b]),[c,d])\cong\Z\).
We have defined a generator~\(\tau_{[a,b]}^{[c,d]}\)
for \(\mathcal{NT}_*([a,b],[c,d])\)
whenever \([a,b] \to [c,d]\),
that is, whenever \(\mathcal{NT}_*([a,b]),[c,d])\neq0\)
by~\eqref{eq:NT_explicit}.  It is convenient to define
\(\tau_{[a,b]}^{[c,d]}=0\)
if \(\mathcal{NT}_*([a,b]),[c,d])=0\).
By the Yoneda Lemma, the natural transformations~\(\tautr{a,b}{c,d}\)
correspond to arrows
\[
\bigl(\tautr{a,b}{c,d}\bigr)^*\colon
\mathcal{R}_{[c,d]} \to \mathcal{R}_{[a,b]}.
\]

\begin{remark}
  \label{rem:exact_NT}
  An \(\mathcal{NT}\)\nb-module
  is called \emph{exact} if the sequences~\eqref{eq:exact_module_NT}
  are exact for all \(a< b \le c\).
  The exact \(\mathcal{NT}\)\nb-modules
  form a stable exact subcategory of the stable Abelian category of
  all \(\mathcal{NT}\)\nb-modules,
  and the filtrated \(\K\)\nb-theory
  of any separable \(\Cst\)\nb-algebra
  over~\(X\)
  is exact as an \(\mathcal{NT}\)\nb-module.
  The results in~\cite{Meyer-Nest:Filtrated_K} imply that any exact
  \(\mathcal{NT}\)\nb-module
  has a projective resolution of length~\(1\).
  Hence it lifts to an object of the bootstrap class~\(\Boot^X\).
  So the image of the filtrated \(\K\)\nb-theory
  functor is equal to the class of exact, countable
  \(\mathcal{NT}\)\nb-modules.
  And the filtrated \(\K\)\nb-theory
  functor, viewed as a functor to the subcategory of exact, countable
  \(\mathcal{NT}\)\nb-modules,
  is the universal \(\Ideal^X_\mathrm{fil}\)\nb-exact
  functor to an exact category.  So~\(\Ideal^X_\mathrm{fil}\)
  has the property that its universal exact functors to an Abelian and
  to an exact category are different.
\end{remark}

Now we study the multiplication in~\(\mathcal{NT}\).
We begin with decomposing the generators in~\(\mathcal{NT}\)
further.  We may rewrite the natural
transformations~\(\tautr{a,b}{c,d}\)
defined above as products of the special natural transformations
\begin{alignat*}{2}
  i = \tautr{a+1,b}{a,b}&\colon [a+1,b] \to [a,b],&\qquad&a+1\le b \le n,\\
  r = \tautr{a,b+1}{a,b}&\colon [a,b+1] \to [a,b],&\qquad&a\le b \le n-1,\\
\delta = \tautr{1,a-1}{a,n}&\colon [1,a-1] \to [a,n],&\qquad&2 \le a \le n.
\end{alignat*}
This is clear in the even case.  In the odd case, we use the
naturality of boundary maps to rewrite the boundary map
\(\delta\colon [a,b] \to [b+1,d]\)
as the product of \(i\colon [a,b] \to [1,b]\),
the boundary map \(\delta\colon [1,b] \to [b+1,n]\),
and \(r\colon [b+1,n] \to [b+1,d]\).
The generating natural transformations defined above form a commuting
diagram as in Figure~\ref{fig:filtrated_K}.%
\begin{figure}
  \caption{Natural transformations on filtrated \(\K\)\nb-theory for general~\(n\)}
  \label{fig:filtrated_K}
  \rotatebox{90}{\begin{tikzcd}[column sep=small,ampersand replacement=\&]
      M[n,n] \arrow[r, "i"] \arrow[d] \&
      M[n-1,n] \arrow[r, "i"] \arrow[d, "r"] \&
      M[n-2,n] \arrow[r, "i"] \arrow[d, "r"] \&
      \dotsb  \arrow[r, "i"] \arrow[d, "r"] \&
      M[2,n] \arrow[r, "i"] \arrow[d, "r"] \&
      M[1,n] \arrow[r] \arrow[d, "r"] \&
      0 \arrow[d] \\
      0 \arrow[r] \&
      M[n-1,n-1] \arrow[r, "i"] \arrow[d] \&
      M[n-2,n-1] \arrow[r, "i"] \arrow[d, "r"] \&
      \dotsb  \arrow[r, "i"] \arrow[d, "r"] \&
      M[2,n-1] \arrow[r, "i"] \arrow[d, "r"] \&
      M[1,n-1] \arrow[r, "\delta"] \arrow[d, "r"] \&
      M[n,n] \arrow[d,"i"] \\
      \& 0 \arrow[r] \&
      M[n-2,n-2] \arrow[r, "i"] \arrow[d] \&
      \dotsb  \arrow[r, "i"] \arrow[d, "r"] \&
      M[2,n-2] \arrow[r, "i"] \arrow[d, "r"] \&
      M[1,n-2] \arrow[r, "\delta"] \arrow[d, "r"] \&
      M[n-1,n] \arrow[d,"i"] \\
      \&\&0\arrow[r]\&\vdots\arrow[r]\&
      \vdots\arrow[r] \arrow[d,"r"]\&
      \vdots\arrow[r] \arrow[d,"r"]\&
      \vdots \arrow[d,"i"]\\
      \&\&\&0\arrow[r]\&
      M[2,2] \arrow[r, "i"] \arrow[d] \&
      M[1,2] \arrow[r, "\delta"] \arrow[d, "r"] \&
      M[3,n] \arrow[d,"i"] \\
      \&\&\&\&0\arrow[r]\&
      M[1,1] \arrow[r, "\delta"] \arrow[d] \&
      M[2,n] \arrow[d,"i"] \\
      \&\&\&\&\&0\arrow[r]\&
      M[1,n]
    \end{tikzcd}}
\end{figure}%
\begin{figure}
  \caption{Complete diagram of natural transformations on filtrated \(\K\)\nb-theory for \(n=3\)}
  \label{fig:filtrated_K_3}
  \rotatebox{90}{\begin{tikzcd}[column sep=small,ampersand replacement=\&]
      M[3,3] \arrow[r, "i"] \arrow[d] \&
      M[2,3] \arrow[r, "i"] \arrow[d, "r"] \&
      M[1,3] \arrow[r] \arrow[d, "r"] \&
      0 \arrow[d] \\
      0 \arrow[r] \&
      M[2,2] \arrow[r, "i"] \arrow[d] \&
      M[1,2] \arrow[r, "\delta"] \arrow[d, "r"] \&
      M[3,3] \arrow[d,"i"] \arrow[r] \&
      0 \arrow[d] \\
      \& 0 \arrow[r] \&
      M[1,1] \arrow[r, "\delta"] \arrow[d] \&
      M[2,3] \arrow[d,"i"] \arrow[r, "r"] \&
      M[2,2] \arrow[d,"i"] \arrow[r] \& 0\arrow[d]\\
      \&\&0\arrow[r]\&
      M[1,3] \arrow[r, "r"] \&
      M[1,2] \arrow[r, "r"] \&
      M[1,1] \\
    \end{tikzcd}}
\end{figure}
The last column and the first row in the diagram are the same.  So the
diagram repeats when we put a reflected copy of it next to it.
Figure~\ref{fig:filtrated_K_3} shows the full diagram for \(n=3\).
We claim that all relations among the generating natural
transformations are given by this extended commuting diagram.  In
particular, a composite of \(i,r,\delta\)
vanishes if and only if it factors through one of the objects~\(0\)
on the boundary of the extended diagram.

Let \(1 \le a \le b \le n\)
and \(1 \le c \le d \le n\).
A product of the generators of type \(i,r\)
from~\([a,b]\)
to~\([c,d]\)
exists if and only if \(c \le a\)
and \(d \le b\).
Figure~\ref{fig:filtrated_K_3} shows that all such products are equal.
Equation~\eqref{eq:NT_explicit} shows that this product is~\(0\)
unless \(a \le d\),
so that \(c \le a \le d \le b\).
Then it is equal to \(\tautr{a,b}{c,d}\)
by the definition in~\eqref{def:tautr_even}.  As a consequence,
\(\tautr{e,g}{c,d} \cdot \tautr{a,b}{e,g} = \tautr{a,b}{c,d}\)
if \(c \le e \le d \le g\)
and \(e \le a \le g \le b\);
this is non-zero if and only if also \(a \le d\)
or, equivalently, \(c \le e \le a \le d \le g \le b\).

Now consider a product of \(i,r,\delta\)
going from~\([a,b]\)
to~\([c,d]\)
and containing exactly one factor of~\(\delta\).
Using the diagram in Figure~\ref{fig:filtrated_K}, we may rearrange
this product in such a way that we first go right and then go down in
the extended diagram as in Figure~\ref{fig:filtrated_K_3}.  If this
goes through the zeros outside the drawn region, the product is~\(0\).
If not, we may combine consecutive~\(i\)
and consecutive~\(r\) to bring the product into the following form:
\[
[a,b] \xrightarrow{i}
[1,b] \xrightarrow{\delta}
[b+1,n] \xrightarrow{r}
[b+1,d] \xrightarrow{i}
[c,d].
\]
The combination \(r\circ\delta\circ i\)
in the beginning is the boundary map
\(\delta\colon [a,b] \to [b+1,d]\).
So we get~\(\tautr{a,b}{c,d}\)
if \(c+1 \le a \le d+1 \le b\)
and~\(0\)
otherwise by \eqref{def:tautr_odd} and~\eqref{eq:NT_explicit}.  Since
we may rewrite all even~\(\tautr{a,b}{c,d}\)
in terms of \(i,r\),
we can now compute \(\tautr{e,g}{c,d}\cdot \tautr{a,b}{e,g}\)
if one of the transformations \(\tautr{e,g}{c,d}\)
and \(\tautr{a,b}{e,g}\)
is even and the other one is odd.  Namely, the product is
\(\tautr{a,b}{c,d}\)
if \(a+1 \le c \le b+1 \le d\),
and~\(0\)
otherwise.  In more detail, the assumption that exactly one of the
transformations \(\tautr{e,g}{c,d}\)
and \(\tautr{a,b}{e,g}\)
is even means that \(e+1 \le c \le g+1 \le d\)
and \(e \le a \le g \le b\),
or \(c \le e \le d \le g\)
and \(a+1 \le e \le b+1 \le g\).
The assumption \(a+1 \le c \le b+1 \le d\)
becomes \(e \le a < c \le g+1 \le b+1 \le d\)
or \(a< c \le e \le b+1 \le d \le g\)
in these two cases, respectively.

Finally, any product with more than two factors~\(\delta\)
vanishes because it may be deformed in the extended diagram in
Figure~\ref{fig:filtrated_K_3} so as to factor through one of the
zeros on the boundary.  We sum up our results about the multiplication
in~\(\mathcal{NT}\):
\begin{equation}
  \label{eq:compose_tau}
  \tautr{e,g}{c,d} \cdot \tautr{a,b}{e,g} = \tautr{a,b}{c,d} \neq0
  \iff
  \left\{
    \begin{array}{c}
      c \le e \le a \le d \le g \le b,\\
      e \le a \le c-1 \le g \le b < d,\\
      a< c \le e \le b+1 \le d \le g,
    \end{array}
  \right.
\end{equation}
and \(\tautr{e,g}{c,d} \cdot \tautr{a,b}{e,g}=0\)
otherwise.  We write
\[
[a,b] \to [e,g] \to [c,d] \iff
\tautr{e,g}{c,d} \cdot \tautr{a,b}{e,g} = \tautr{a,b}{c,d} \neq0.
\]
It can happen that \([a,b] \to [e,g]\),
\([e,g] \to [c,d]\)
and \([a,b] \to [c,d]\),
but not \([a,b] \to [e,g] \to [c,d]\);
that is, \(\tautr{e,g}{c,d} \cdot \tautr{a,b}{e,g} =0\)
although \(\tautr{a,b}{c,d},\tautr{a,b}{e,g}, \tautr{e,g}{c,d}\neq0\).

Given \(1\le a \le b \le n\),
there are one or two proper natural transformations to \(M[a,b]\)
that are shortest in the sense that all others factor through them.
If \(a<b<n\),
these are \(i\colon M[a+1,b] \to M[a,b]\)
and \(r\colon M[a,b+1]\to M[a,b]\).
If \(1< a<b=n\),
then~\(r\)
above is replaced by \(\delta\colon M[1,a-1] \to M[a,n]\).
One of these maps is missing if \(a=b\)
or if \((a,b) = (1,n)\),
that is, on the two outer diagonals in the diagram in
Figure~\ref{fig:filtrated_K_3} (now for general~\(n\)).
We define \(M[a+1,a]\defeq0\)
for \(0\le a \le n\) to make this a special case of the generic case.

Now we build an \(\Ideal^X_\mathrm{fil}\)\nb-projective
resolution of~\(A\)
of length~\(1\).
This has not yet been done in~\cite{Meyer-Nest:Filtrated_K}, where
only the existence of such a resolution is proven.
For \(1\le a \le b \le n\),
let \(M[a,b]_\mathrm{ss}\)
be the quotient of~\(M[a,b]\)
by the images of all proper natural transformations to \(M[a,b]\)
or, equivalently, by the images of the two shortest natural
transformations:
\[
M[a,b]_\mathrm{ss} \defeq
\begin{cases}
  M[a,b]\bigm/ (i(M[a+1,b]) + r(M[a,b+1]))&\text{if }b<n,\\
  M[a,b]\bigm/ (i(M[a+1,b]) + \delta(M[1,a-1]))&\text{if }b=n
\end{cases}
\]
(compare \cite{Meyer-Nest:Filtrated_K}*{Definition~3.7 and
  Lemma~3.8}).  Choose a resolution
\begin{equation}
  \label{eq:Mab_ss_resolution}
  \begin{tikzcd}
    Q_1[a,b] \arrow[r, rightarrowtail, "d_1"] &
    Q_0[a,b] \arrow[r, twoheadrightarrow, "d_0"] &
    M[a,b]_\mathrm{ss}
  \end{tikzcd}
\end{equation}
of~\(M[a,b]_\mathrm{ss}\)
by countable \(\Z/2\)\nb-graded
free Abelian groups.  For \(i=0,1\), let
\[
\hat{Q}_i[a,b] \defeq \mathcal{R}_{[a,b]} \otimes_\Z Q_i[a,b],
\]
where the tensor product is defined as
in~\eqref{def:tensor_with_Cstar}.
Since~\(\mathcal{R}_{[a,b]} \otimes_\Z Q_i[a,b]\)
is a direct sum of copies of suspensions of~\(\mathcal{R}_{[a,b]}\),
the definition of~\(\mathcal{R}_{[a,b]}\)
as a representing object implies
\begin{equation}
  \label{eq:KK_out_of_RY}
  \KK^X_0(\hat{Q}_i[a,b], B) \cong \Hom\bigl(Q_i[a,b], \K_*(B[a,b])\bigr)
\end{equation}
for all \(\Cst\)\nb-algebras~\(B\)
over~\(X\);
here~\(\Hom\)
means grading-preserving group homomorphisms.
This property characterises~\(\hat{Q}_i[a,b]\)
uniquely up to isomorphism in~\(\KKcat^X\).
Equation~\eqref{eq:KK_out_of_RY} implies a similar description of
the \(\Z/2\)\nb-graded Abelian group
\(\KK^X_*(\hat{Q}_i[a,b], B)\),
replacing \(\Hom\)
by group homomorphisms that need not respect the grading.
Given a group homomorphism \(g\colon Q_i[a,b] \to \K_*(B[a,b])\),
let~\(g^\#\)
denote the corresponding element of
\(\KK^X_*(\hat{Q}_i[a,b], B)\).

Since the
\(\Z/2\)\nb-graded
Abelian group \(Q_0[a,b]\)
is free, the homomorphism~\(d_0\)
in~\eqref{eq:Mab_ss_resolution} lifts to a grading-preserving
homomorphism
\begin{equation}
  \label{eq:Q0_to_M}
  f[a,b]\colon Q_0[a,b] \to M[a,b] = \K_*(A[a,b]).
\end{equation}
Let \(f[a,b]^\# \in \KK^X_0(\hat{Q}_0[a,b], A)\)
correspond to~\(f[a,b]\) by~\eqref{eq:KK_out_of_RY}.  Let
\[
P_i \defeq \bigoplus_{1\le a\le b\le n} \hat{Q}_i[a,b]
\]
for \(i=0,1\).  The objects \(\hat{Q}_i[a,b]\)
and~\(P_i\)
for \(i=0,1\)
are \(\Ideal^X_\mathrm{fil}\)-projective
because of~\eqref{eq:KK_out_of_RY}.  There is
a unique element \(f \in \KK^X_0(P_0,A)\)
that restricts to~\(f[a,b]^\#\)
on the summand \(\hat{Q}_i[a,b]\).

\begin{lemma}
  \label{lem:surjective_filtrated_K}
  The map \(FK(f)\colon FK(P_0) \to M \defeq FK(A)\)
  is surjective.  Its kernel is isomorphic to~\(FK(P_1)\)
  as an \(\mathcal{NT}\)-module.
\end{lemma}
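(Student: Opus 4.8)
The plan is to establish surjectivity of $FK(f)$ first and then to identify $N\defeq\ker FK(f)$ with $FK(P_1)$. Surjectivity rests on two facts about~$\mathcal{NT}$: every element is an integer combination of composites of the shortest natural transformations $i$, $r$, $\delta$, and — by the multiplication rules~\eqref{eq:compose_tau}, since a nonzero composite contains at most two factors~$\delta$, each flanked by runs of $i$'s and $r$'s of length at most~$2n$ — every composite of more than some $N_0=N_0(n)$ of these shortest transformations vanishes. Given $m\in M[a,b]$, its image in $M[a,b]_\mathrm{ss}$ lifts into $f[a,b](Q_0[a,b])$ by the very choice of~$f[a,b]$ in~\eqref{eq:Q0_to_M}, so $m$ differs from an element of $f[a,b](Q_0[a,b])$ by a sum of images of shortest natural transformations into~$[a,b]$. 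Iterating this decomposition $N_0$ times, the residual term is a sum of composites of shortest transformations of length~$\ge N_0$ applied to module elements, hence vanishes, whereas each remaining term is a composite of shortest transformations applied to an element of some $f[c,d](Q_0[c,d])$ and so lies in the image of the $\mathcal{NT}$-linear map $FK(f)$, which restricts to~$f[c,d]$ on the summand $FK(\hat Q_0[c,d])$. Thus $FK(f)$ is surjective.

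Next I would build a morphism of $\mathcal{NT}$-modules $\psi\colon FK(P_1)\to FK(P_0)$ with image in~$N$. For a free generator $q\in Q_1[a,b]$ of $FK(P_1)$ the element $f[a,b](d_1 q)\in M[a,b]$ maps to~$0$ in $M[a,b]_\mathrm{ss}$, hence equals a sum $i(m_1)+r(m_2)$ of images of shortest natural transformations (with $\delta$ in place of~$r$ when $b=n$); choosing, via surjectivity of~$FK(f)$ at the adjacent vertices, elements of $FK(P_0)$ mapping to $m_1,m_2$, and subtracting their images under the structure maps from the element $d_1 q$ of the $[a,b]$-summand of $FK(P_0)[a,b]$, defines $\psi(q)$ with $FK(f)(\psi(q))=0$; this determines $\psi$, which therefore factors through~$N$. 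For $[c,d]\ne[a,b]$ every nonzero $\tautr{c,d}{a,b}$ factors through a shortest transformation into~$[a,b]$, so the semisimplification at~$[a,b]$ of $FK(\mathcal{R}_{[c,d]})$ vanishes and $FK(P_0)[a,b]_\mathrm{ss}\cong Q_0[a,b]$; in this quotient $\psi(q)$ becomes $d_1 q$, as the correction terms sit in the summands indexed by $[c,d]\ne[a,b]$. Restricting $N\into FK(P_0)$ to these quotients gives a map $N[a,b]_\mathrm{ss}\to Q_0[a,b]$ with image $\ker d_0 = d_1(Q_1[a,b])$ (``$\subseteq$'' because $FK(f)$ kills~$N$ and all cross terms land in images of shortest transformations; ``$\supseteq$'' from~$\psi$). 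Hence $\psi$ induces a split monomorphism $Q_1[a,b]\to N[a,b]_\mathrm{ss}$ for every~$[a,b]$, with retraction the induced surjection $N[a,b]_\mathrm{ss}\twoheadrightarrow Q_1[a,b]$.

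The point still to be proved — and the main obstacle — is that this retraction is injective, i.e.\ that $N[a,b]_\mathrm{ss}\to Q_0[a,b]$ is injective; equivalently, that every $\eta\in N[a,b]$ lying, inside $FK(P_0)[a,b]$, in the span of the summands indexed by $[c,d]\ne[a,b]$ already lies in the subgroup generated by the images of the shortest natural transformations acting on~$N$. Such an~$\eta$ can be written as a sum of shortest transformations applied to elements of $FK(P_0)$ at the neighbouring vertices; using the exactness of~$M$ as an $\mathcal{NT}$-module together with the relations~\eqref{eq:compose_tau} — so that the loops visible in Figures~\ref{fig:filtrated_K} and~\ref{fig:filtrated_K_3} contribute nothing — one corrects those elements so that they too lie in~$N$. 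This is the delicate combinatorial part, where the explicit description of the natural transformations on filtrated $\K$-theory is really needed.

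Granting this, $\psi$ induces an isomorphism on the semisimplification functor $L\mapsto(L[a,b]_\mathrm{ss})_{[a,b]}$. That functor is right exact and detects the zero module: if $L[a,b]_\mathrm{ss}=0$ for all $[a,b]$, then $L[a,b]$ equals the span of the images of shortest transformations into~$[a,b]$, and iterating $N_0$ times (again using that length-$N_0$ composites vanish) gives $L=0$. Hence $\coker(\psi\colon FK(P_1)\to N)$ vanishes, so $\psi$ is surjective. Finally, $M$ has projective dimension~$\le1$ over all $\mathcal{NT}$-modules — its length-$1$ resolution in the exact category of exact $\mathcal{NT}$-modules (Remark~\ref{rem:exact_NT}) has representable, hence Abelian-categorically projective, terms — so Schanuel's lemma, applied to $FK(f)\colon FK(P_0)\prto M$ and this resolution, shows $N$ is a direct summand of a projective, hence projective. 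Therefore $\psi$ splits, its kernel is a direct summand of $FK(P_1)$ with vanishing semisimplification and hence zero, and $\psi\colon FK(P_1)\congto N$; in particular $N[a,b]_\mathrm{ss}\cong Q_1[a,b]$.
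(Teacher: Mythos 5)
Your surjectivity argument is sound and takes a more hands-on route than the paper, which simply observes that semisimplification is the right-exact functor $\mathcal{NT}_\mathrm{ss}\otimes_\mathcal{NT}\blank$, concludes $(\coker f_*)_\mathrm{ss}=0$, and cites \cite{Meyer-Nest:Filtrated_K}*{Proposition~3.10} to detect that $\coker f_*=0$; your nilpotence bound $N_0(n)$ is essentially an unwinding of that proposition. For the kernel, however, you have correctly located the crux and left it unproved: the claim that $N[a,b]_\mathrm{ss}\to Q_0[a,b]$ is injective, i.e.\ that the semisimplification of the extension $N\into FK(P_0)\prto M$ stays short exact. You flag this as ``the delicate combinatorial part'' but do not supply the argument, so as written the proof has a genuine gap.

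The paper fills exactly this hole by a homological device you do not invoke: since $M$ and $FK(P_0)$ are \emph{exact} $\mathcal{NT}$-modules, \cite{Meyer-Nest:Filtrated_K}*{Lemma~3.13} gives $\Tor^1_\mathcal{NT}(\mathcal{NT}_\mathrm{ss},M)=\Tor^1_\mathcal{NT}(\mathcal{NT}_\mathrm{ss},FK(P_0))=0$, whence the long exact Tor sequence shows that $N[a,b]_\mathrm{ss}\into Q_0[a,b]\prto M[a,b]_\mathrm{ss}$ is short exact and moreover $\Tor^1_\mathcal{NT}(\mathcal{NT}_\mathrm{ss},N)=0$. From this the identification $N[a,b]_\mathrm{ss}\cong Q_1[a,b]$ drops out at once, and \cite{Meyer-Nest:Filtrated_K}*{Theorem~3.12} then yields projectivity of~$N$ and the isomorphism $FK(P_1)\congto N$ via sections of $N[a,b]\prto N[a,b]_\mathrm{ss}$. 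Your subsequent steps (building $\psi$, Nakayama for surjectivity, Schanuel for projectivity, splitting off a zero kernel) are a workable alternative to invoking Theorem~3.12, but they all come \emph{after} the point you conceded; without the Tor-vanishing of Lemma~3.13, or an explicit combinatorial substitute for it, the identification $N[a,b]_\mathrm{ss}\cong Q_1[a,b]$ is unjustified. If you want to stay elementary, the missing input is precisely the exactness of the six-term sequences~\eqref{eq:exact_module_NT} for the exact modules $M$ and $FK(P_0)$, which is what makes the ``loops'' in Figure~\ref{fig:filtrated_K} contribute nothing; you mention this in passing, but it needs to be turned into an argument.
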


\begin{proof}
  Let \(f_*\defeq FK(f)\).
  The \(\mathcal{NT}\)\nb-module \(M_\mathrm{ss}\)
  is defined as the quotient \(M/\mathcal{NT}_\mathrm{nil}\cdot M\)
  for a certain ideal~\(\mathcal{NT}_\mathrm{nil}\)
  in~\(\mathcal{NT}\).
  It follows that the functor \(M\mapsto M_\mathrm{ss}\) is right exact.
  Even more, it is isomorphic to the tensor product functor with the
  right \(\mathcal{NT}\)\nb-module
  \(\mathcal{NT}_\mathrm{ss}\).
  This follows from the extension of
  \(\mathcal{NT}\)\nb-modules
  \(\mathcal{NT}_\mathrm{nil}\into\mathcal{NT}
  \prto\mathcal{NT}_\mathrm{ss}\)
  and the right exactness of tensor product functors.

  Right exactness implies
  \((\coker f_*)[a,b]_\mathrm{ss} = \coker (f_*[a,b]_\mathrm{ss})\)
  for all \(1 \le a \le b \le n\).
  The projective \(\mathcal{NT}\)\nb-module~\(FK(P_0)\)
  has \(FK(P_0)[a,b]_\mathrm{ss} \cong Q_0[a,b]\).
  Hence \((f_*)_\mathrm{ss}\)
  is surjective by construction of~\(f\),
  and the \(\mathcal{NT}\)\nb-module
  \(\coker f_*\)
  satisfies \((\coker f_*)_\mathrm{ss} = 0\).
  This implies \(\coker f_*=0\)
  by \cite{Meyer-Nest:Filtrated_K}*{Proposition~3.10}.  That is,
  \(f_*\)
  is surjective.  Let \(N\defeq \ker FK(f)\).
  So there is an extension \(N \into FK(P_0) \prto M\)
  of \(\mathcal{NT}\)\nb-modules.
  Since \(M\)
  and \(FK(P_0)\)
  are exact \(\mathcal{NT}\)\nb-modules,
  they satisfy
  \(\Tor^1_\mathcal{NT}(\mathcal{NT}_\mathrm{ss}, M)=0\)
  and \(\Tor^1_\mathcal{NT}(\mathcal{NT}_\mathrm{ss}, FK(P_0))=0\)
  by \cite{Meyer-Nest:Filtrated_K}*{Lemma~3.13}.  Hence
  \[
  N[a,b]_\mathrm{ss}
  \to FK(P_0)[a,b]_\mathrm{ss}
  \to M[a,b]_\mathrm{ss}
  \]
  is a short exact sequence for all \(1 \le a \le b \le n\)
  and \(\Tor^1_\mathcal{NT}(\mathcal{NT}_\mathrm{ss}, N)=0\).
  Since \(FK(P_0)[a,b]_\mathrm{ss} \cong Q_0[a,b]\),
  this implies \(N[a,b]_\mathrm{ss} \cong Q_1[a,b]\).
  Now \cite{Meyer-Nest:Filtrated_K}*{Theorem~3.12} shows that~\(N\)
  is a projective \(\mathcal{NT}\)-module.
  In fact, the proof of this theorem shows that \(N \cong FK(P_1)\).
  More precisely, the quotient maps
  \(N[a,b] \prto N[a,b]_\mathrm{ss}\)
  split because \(N[a,b]_\mathrm{ss} \cong Q_1[a,b]\)
  is free.  Let
  \begin{equation}
    \label{eq:section_hab}
    \varphi[a,b]\colon Q_1[a,b] \cong N[a,b]_\mathrm{ss}
    \to N[a,b] \subseteq FK(P_0)[a,b]
  \end{equation}
  be sections.  They induce an \(\mathcal{NT}\)\nb-module
  homomorphism \(FK(P_1) \to N\)
  by the universal property of the ``free''
  \(\mathcal{NT}\)\nb-module~\(FK(P_1)\).
  And the proof of \cite{Meyer-Nest:Filtrated_K}*{Theorem~3.12} shows
  that it is an isomorphism.
\end{proof}

Disregarding the \(\Z/2\)\nb-grading, we may write
\[
FK(P_0)[a,b] = \bigoplus_{[c,d]\to[a,b]} Q_0[c,d],
\]
that is, the sum runs over all \(1\le c \le d \le n\)
with \(a\le c \le b \le d\)
or \(c+1\le a \le d+1 \le b\).
So the map~\(\varphi[a,b]\) in~\eqref{eq:section_hab} has components
\[
\varphi_{[a,b]}^{[c,d]}\colon Q_1[a,b] \to Q_0[c,d]
\]
for \([c,d] \to [a,b]\).
Since~\(\varphi[a,b]\)
is even, the map \(\varphi_{\7{a,b}}^{\7{c,d}}\)
has the same parity as \(\tautr{c,d}{a,b}\),
that is, it is grading-preserving if \(a \le c \le b \le d\)
and grading-reversing if \(c+1 \le a \le d+1 \le b\).
The image of \(\varphi[a,b]\)
for \(1\le a \le b \le n\)
is contained in \(N = \ker FK(f)\),
that is, \(FK(f) \circ \varphi[a,b]=0\)
as a map \(Q_1[a,b] \to \K_*(A[a,b])\).
Unravelling the definition of \(FK(f)\), this becomes
\begin{equation}
  \label{eq:tau_f_phi_relation}
  \sum_{[c,d]\to[a,b]}
  \tau_{[c,d]}^{[a,b]} \circ f[c,d] \circ \varphi_{[a,b]}^{[c,d]}
  = 0 \colon Q_1[a,b] \to \K_*(A[a,b]).
\end{equation}
The objects \(P_0\) and~\(P_1\)
are \(\Ideal^X_\mathrm{fil}\)-projective,
and \(FK\)
is fully faithful on \(\Ideal^X_\mathrm{fil}\)-projective objects.
So the arrow \(FK(P_1) \to FK(P_0)\)
with components \(\varphi_{[a,b]}^{[c,d]}\)
for \([c,d]\to[a,b]\)
lifts uniquely to an arrow \(\varphi\in \KK^X_0(P_1,P_0)\).
More precisely, the map~\(\varphi\)
is given by a matrix of maps
\(\mathcal{R}_{[c,d]} \otimes Q_1[c,d] \to
\mathcal{R}_{[a,b]} \otimes Q_0[a,b]\)
for all \(1\le c\le d\le n\)
and \(1\le a\le b\le n\).
The entries of this matrix are
\(\Bigl(\bigl(\tautr{a,b}{c,d}\bigr)^* \otimes
\varphi_{\7{c,d}}^{\7{a,b}}\Bigr)\), that is,
\begin{multline}
  \label{eq:varphi_components}
  \varphi = \Bigl(\bigl(\tautr{a,b}{c,d}\bigr)^* \otimes
  \varphi_{\7{c,d}}^{\7{a,b}}\Bigr)_{[c,d]\to[a,b]}\colon\\
  \bigoplus_{1\le c\le d\le n} \mathcal{R}_{[c,d]} \otimes Q_1[c,d] \to
  \bigoplus_{1\le a\le b\le n} \mathcal{R}_{[a,b]} \otimes Q_0[a,b].
\end{multline}
Here we use the convention that \(\tautr{a,b}{c,d}=0\)
if not \([a,b]\to [c,d]\).

Lemma~\ref{lem:surjective_filtrated_K} says
that~\eqref{eq:fil_K_resolution} with \(f\)
and~\(\varphi\)
as above is \(\Ideal^X_\mathrm{fil}\)-exact,
that is, \(FK\)
applied to~\eqref{eq:fil_K_resolution} is an exact sequence.  The
\(\Ideal^X_\mathrm{fil}\)-exactness
of~\eqref{eq:fil_K_resolution} says that the functor
\(B\mapsto \K_*(B[a,b])\)
maps it to an exact sequence for each \(1 \le a \le b \le n\).
In fact, this gives projective resolutions.  We write them down
explicitly:

\begin{lemma}
  \label{lem:resolution_KBab}
  Let \(1 \le a \le b \le n\).  Then
  \begin{equation}
    \label{eq:varphi_f_exact_sequence_ab}
    \bigoplus_{[c,d] \to [a,b]} Q_1[c,d]
    \xrightarrow{\bigl(\varphi_{\7{c,d}}^{\7{e,g}}\bigr)_{\7{c,d}\to \7{e,g}\to \7{a,b}}}
    \bigoplus_{[e,g] \to [a,b]} Q_0[e,g]
    \xrightarrow{\bigl(\tau_{\7{e,g}}^{\7{a,b}}\circ f\7{e,g}\bigr)}
    \K_*(A[a,b])  
  \end{equation}
  is a free resolution.  Here
  \(\bigl(\varphi_{\7{c,d}}^{\7{e,g}}\bigr)_{\7{c,d}\to \7{e,g}\to
    \7{a,b}}\)
  means that the matrix entry is \(\varphi_{\7{c,d}}^{\7{e,g}}\)
  if \(\7{c,d}\to \7{e,g}\to \7{a,b}\)
  as described in~\eqref{eq:compose_tau}, and~\(0\) otherwise.
\end{lemma}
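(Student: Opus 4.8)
The plan is to derive~\eqref{eq:varphi_f_exact_sequence_ab} by evaluating, at the single ``spot''~$[a,b]$, the exact sequence of $\mathcal{NT}$\nb-modules \(0 \to FK(P_1) \xrightarrow{FK(\varphi)} FK(P_0) \xrightarrow{FK(f)} FK(A) \to 0\) produced by Lemma~\ref{lem:surjective_filtrated_K}.  Since evaluating an $\mathcal{NT}$\nb-module~$M$ at~$[a,b]$ amounts to applying the idempotent $\tautr{a,b}{a,b}\in\mathcal{NT}$, it is an exact operation, so we obtain a short exact sequence of Abelian groups with first map \(FK(\varphi)[a,b]\) and second map \(FK(f)[a,b]\); equivalently one may simply quote the $\Ideal^X_\mathrm{fil}$\nb-exactness of~\eqref{eq:fil_K_resolution} already recorded above.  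The middle terms will turn out to be free Abelian groups, so this is a free resolution of length~\(1\) once the maps are identified.

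Next I would compute the terms.  From \(P_i=\bigoplus_{c\le d}\hat Q_i[c,d]\) with \(\hat Q_i[c,d]=\mathcal{R}_{[c,d]}\otimes_\Z Q_i[c,d]\), and the fact that \(\K\)\nb-theory and the functor \(B\mapsto B([a,b])\) commute with the relevant direct sums, one has \(FK(P_i)[a,b]=\bigoplus_{c\le d}\K_*(\mathcal{R}_{[c,d]}([a,b]))\otimes_\Z Q_i[c,d]\).  By the Yoneda identification recalled before the lemma, \(\K_*(\mathcal{R}_{[c,d]}([a,b]))=\mathcal{NT}_*([c,d],[a,b])\), which is infinite cyclic on~\(\tautr{c,d}{a,b}\) when \([c,d]\to[a,b]\) and zero otherwise.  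Trivialising these rank\nb-one groups by their chosen generators identifies \(FK(P_i)[a,b]\) with \(\bigoplus_{[c,d]\to[a,b]}Q_i[c,d]\), which is free.

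Then I would identify the maps by naturality.  The summand \(\hat Q_0[c,d]\) of~\(P_0\) is mapped to~\(A\) by \(f[c,d]^\#\), which realises \(f[c,d]\) on \(\K_*(-([c,d]))\); applying \(\K_*(-([a,b]))\) and using that the chosen generator of \(\K_*(\hat Q_0[c,d]([a,b]))\) is the \(\tautr{c,d}{a,b}\)\nb-image of the generator of \(\K_*(\hat Q_0[c,d]([c,d]))\), naturality of the transformation~\(\tautr{c,d}{a,b}\) gives that the \([c,d]\)\nb-component of \(FK(f)[a,b]\) is \(\tautr{c,d}{a,b}\circ f[c,d]\), the second map of~\eqref{eq:varphi_f_exact_sequence_ab}.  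For \(FK(\varphi)[a,b]\), the block of~\(\varphi\) between the summands \(\hat Q_1[c,d]\) and \(\hat Q_0[e,g]\) is \((\tautr{e,g}{c,d})^*\otimes\varphi_{\7{c,d}}^{\7{e,g}}\) by~\eqref{eq:varphi_components}; applying \(\K_*(-([a,b]))\) turns \((\tautr{e,g}{c,d})^*\) into a map \(\mathcal{NT}_*([c,d],[a,b])\to\mathcal{NT}_*([e,g],[a,b])\) which, by the contravariant Yoneda dictionary between the morphisms of the~\(\mathcal{R}_{[\cdot]}\) and natural transformations, is multiplication in~\(\mathcal{NT}\) by~\(\tautr{e,g}{c,d}\).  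On the generator~\(\tautr{c,d}{a,b}\) this gives the product \(\tautr{c,d}{a,b}\cdot\tautr{e,g}{c,d}\), and the multiplication table~\eqref{eq:compose_tau} then pins it down: it is the generator of \(\mathcal{NT}_*([e,g],[a,b])\) exactly for the composites recorded in the lemma, and zero otherwise.  Hence the \((c,d),(e,g)\)\nb-entry of \(FK(\varphi)[a,b]\) is \(\varphi_{\7{c,d}}^{\7{e,g}}\) in the stated range and~\(0\) otherwise; exactness of~\eqref{eq:varphi_f_exact_sequence_ab} is then immediate from Lemma~\ref{lem:surjective_filtrated_K}, and injectivity of the first map follows because \(FK(\varphi)\) is a monomorphism and evaluation at~\([a,b]\) is exact.

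The step I expect to be the main obstacle is this last computation of \(FK(\varphi)[a,b]\): one has to keep careful track of the direction of the Yoneda duality --- a \(\KKcat^X\)\nb-morphism \(\mathcal{R}_{[c,d]}\to\mathcal{R}_{[e,g]}\) corresponds to a natural transformation \(\K_*(-([e,g]))\to\K_*(-([c,d]))\) and induces post\nb-composition on \(\KK^X_*(\mathcal{R}_{[a,b]},-)\) --- and of the convention that \(\sigma\cdot\rho\) in~\(\mathcal{NT}\) means \(\sigma\) after~\(\rho\), so that the non\nb-vanishing condition coming out of~\eqref{eq:compose_tau} matches the composite stated in the lemma.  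Everything else --- the identification of the terms, exactness, and freeness --- is then routine given Lemma~\ref{lem:surjective_filtrated_K} and the freeness of the~\(Q_i[c,d]\).
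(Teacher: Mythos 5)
Your proposal follows essentially the same route as the paper: identify $FK(P_i)[a,b]\cong\bigoplus_{[c,d]\to[a,b]}Q_i[c,d]$ via the Yoneda formula $\K_*(\mathcal{R}_{[c,d]}([a,b]))\cong\mathcal{NT}_*([c,d],[a,b])$, read off the maps via naturality and the multiplication table~\eqref{eq:compose_tau}, and obtain exactness from Lemma~\ref{lem:surjective_filtrated_K}.  The one place where you are not yet done is exactly the step you yourself flag as the main obstacle: you correctly compute that the generator $\tautr{c,d}{a,b}$ is sent to $\tautr{c,d}{a,b}\cdot\tautr{e,g}{c,d}\in\mathcal{NT}_*([e,g],[a,b])$, but then simply assert that the nonvanishing condition coming out of~\eqref{eq:compose_tau} ``matches the composite stated in the lemma'' without working it through.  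When you do work it through, the product $\tautr{c,d}{a,b}\cdot\tautr{e,g}{c,d}$ is $\tautr{e,g}{c,d}$ \emph{followed by} $\tautr{c,d}{a,b}$, so by the convention of~\eqref{eq:compose_tau} it is nonzero precisely when $[e,g]\to[c,d]\to[a,b]$, with the first two intervals in the opposite order from the lemma's display (and from the paper's own proof, which cites the same chain).  A quick sanity check confirms your computation rather than the displayed chain: take $[c,d]=[a,b]$, so the summand $Q_1[a,b]$ of $FK(P_1)[a,b]$ is the generating copy, and the matrix row must reproduce $\varphi[a,b]$ with all of its components $\varphi_{[a,b]}^{[e,g]}$ for $[e,g]\to[a,b]$; otherwise the composite with the augmentation would not vanish as required by~\eqref{eq:tau_f_phi_relation}.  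This is compatible with $[e,g]\to[a,b]\to[a,b]$ but not with $[a,b]\to[e,g]\to[a,b]$.  So your Yoneda calculation is correct, but you should not take the displayed chain on trust; the concluding sentence of your proof needs to verify the chain direction explicitly, and doing so reveals that the lemma's subscript notation is transposed.
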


The boundary maps in~\eqref{eq:varphi_f_exact_sequence_ab} are
inhomogeneous, that is, the matrix entries of the maps may have even
or odd degree.

\begin{proof}
  Equation~\eqref{eq:NT_explicit} computes the group
  \(\K_*(\mathcal{R}_{[c,d]}([a,b])) \cong
  \mathcal{NT}_*([c,d],[a,b])\):
  it is~\(\Z\)
  in even or odd degree if \([c,d] \to [a,b]\)
  and~\(0\)
  otherwise.  Therefore,
  \(P_i[a,b] \cong \bigoplus_{[c,d] \to [a,b]} Q_i[c,d]\)
  for \(i=0,1\),
  disregarding the grading.  The map
  \(\Bigl(\bigl(\tautr{e,g}{c,d}\bigr)^* \otimes
  \varphi_{\7{c,d}}^{\7{e,g}}\Bigr)\)
  between the summands \(\mathcal{R}_{[c,d]} \otimes Q_1[c,d]\)
  in~\(P_1\)
  and \(\mathcal{R}_{[e,g]} \otimes Q_0[e,g]\)
  in~\(P_0\)
  induces the map
  \(\varphi_{\7{c,d}}^{\7{e,g}}\colon Q_1[c,d] \to Q_0[e,g]\)
  if \([c,d] \to [e,g] \to [a,b]\),
  and~\(0\)
  otherwise, compare~\eqref{eq:compose_tau}.  The map
  \(\K_*(P_0[a,b]) \to \K_*(A[a,b])\) corresponds to a family of maps
  \[
  \K_*(\mathcal{R}_{[c,d]}[a,b]) \otimes Q_0[c,d] \cong
  \K_*(\mathcal{R}_{[c,d]}[a,b] \otimes Q_0[c,d]) \to \K_*(A[a,b])
  \]
  for \(1 \le c \le d \le n\).
  As above, \(\K_*(\mathcal{R}_{[c,d]}[a,b])\neq0\)
  only if \([c,d] \to [a,b]\),
  and then the map \(Q_0[c,d] \to K_*(A[a,b])\)
  induced by \(f\colon P_0 \to A\)
  is \(\tautr{c,d}{a,b} \circ f[c,d]\).
\end{proof}

We have reached the first milestone in the computation of the
obstruction class: the \(\Ideal_\mathrm{fil}^X\)-projective
resolution~\eqref{eq:fil_K_resolution}.  It is explicit enough to
express the obstruction class that comes from filtrated
\(\K\)\nb-theory
in the terms of Theorem~\ref{the:obstruction_class_X}, namely, as
being represented by a family of elements in
\(\Ext^1\bigl(\K_*(A[e+1,n]),\K_*(A[e,n])\bigr)\)
for \(e=1,\dotsc,n-1\).
We compute these Ext-groups with the resolutions
in~\eqref{eq:varphi_f_exact_sequence_ab}.  So the obstruction class
corresponds to a sequence of maps
\[
\delta_e\colon
\bigoplus_{[a,b] \to [e+1,n]} Q_1[a,b] \to \K_*(A[e,n]),\qquad
e=1,\dotsc,n-1.
\]
In turn, each~\(\delta_e\)
is given by maps \(\delta_e^{[a,b]}\colon Q_1[a,b] \to \K_*(A[e,n])\)
for all \(1 \le a \le b \le n\)
with \([a,b] \to [e+1,n]\).
The following theorem computes these maps~\(\delta_e^{[a,b]}\).
It is the main result of this section.
Section~\ref{sec:proof_filtrated_theorem} is dedicated to its proof.

\begin{theorem}
  \label{the:compare_obstruction_filtered}
  Let
  \[
  \delta_e^{[a,b]} \defeq
  \begin{cases}
    \displaystyle\sum_{[c,d] \to [a,n] \to [a,b]}
    \tautr{c,d}{e,n} \circ f[c,d] \circ \varphi_{\7{a,b}}^{\7{c,d}}&
    \text{if }a=e \text{ and }b<n,\\
    0&\text{otherwise.}
  \end{cases}
  \]
  The resulting family of maps \((\delta_e)_{1 \le e <n}\)
  lifts the obstruction class of~\(A\)
  to an element of
  \(\prod_{e=1}^{n-1} \Ext^1\bigl(\K_*(A[e+1,n]),\K_*(A[e,n])\bigr)\).
\end{theorem}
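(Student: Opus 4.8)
The plan is to feed the filtrated \(\K\)\nb-theory resolution~\eqref{eq:fil_K_resolution} into Theorem~\ref{the:compute_obstruction} and then to translate the resulting formula through the explicit resolutions constructed above in this section. First, by \cite{Bentmann-Meyer:More_general}*{Proposition~2.3} the \(\Ideal^X_\mathrm{fil}\)\nb-projective resolution~\eqref{eq:fil_K_resolution} extends to an exact, \(\Ideal^X_\mathrm{fil}\)\nb-exact triangle \(P_1 \xrightarrow{\varphi} P_0 \xrightarrow{p} A \to \Sigma P_1\), so that \(A\) is a cone of~\(\varphi\). Since \(\Ideal^X_\mathrm{fil} \subseteq \Ideal^X_\K\), this triangle is also \(\Ideal^X_\K\)\nb-exact; and since \(P_0\) and~\(P_1\) are \(\Ideal^X_\mathrm{fil}\)\nb-projective, the groups \(\K_*(P_i(U_x))\) are free, so the computation of \(\Ext^2_{\Abel^X}\) in Lemma~\ref{lem:Ext_2_over_X} shows that each~\(P_i\) has an \(\Ideal^X_\K\)\nb-projective resolution of length~\(1\). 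Hence we are exactly in the situation of~\eqref{eq:triangle_B1B0A} with \(F = F^X_\K\), and Theorem~\ref{the:compute_obstruction} identifies the obstruction class of~\(A\) with \(\partial(F^X_\K(p)\circ\varphi^1)\), where \(\varphi^1 \in \Ext^1_{\Abel^X}(\Sigma F^X_\K(P_1),F^X_\K(P_0))\) is the parity-reversing part of \(\varphi \in \KK^X_0(P_1,P_0)\).

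The second step is to make this formula concrete. Using that \(\mathcal{R}_{[a,n]}\) and \(i_a(\C)\) both represent the functor \(B\mapsto\K_*(B(U_a))\), the summands \(\hat{Q}_i[a,n]\) of~\(P_i\) are already \(\Ideal^X_\K\)\nb-projective, with \(F^X_\K(\hat{Q}_i[a,n]) \cong j_a(Q_i[a,n])\); for \(b<n\), the diagram \(F^X_\K(\hat{Q}_i[a,b])\) has value \(\Sigma Q_i[a,b]\) on \(\{a+1,\dots,b+1\}\) and \(0\) elsewhere, and choosing for it a length-\(1\) projective resolution in \(\Abel^X\) (lifted to \(\KKcat^X\)) yields length-\(1\) \(\Ideal^X_\K\)\nb-projective resolutions of \(P_0\) and~\(P_1\), hence the length-\(2\) resolution~\eqref{eq:resolution_A} of~\(A\). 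Reading off the parities in~\eqref{eq:varphi_components} against these splittings pins down \(\varphi^1\), and composing with \(F^X_\K(p)\) replaces each summand \(\hat{Q}_0[c,d]\) by its image in \(F^X_\K(A)\) under \(\tautr{c,d}{a,b}\circ f[c,d]\), as recorded in~\eqref{eq:tau_f_phi_relation}. It then remains to evaluate the boundary map \(\partial\) into \(\Ext^2_{\Abel^X}(\Sigma F^X_\K(A),F^X_\K(A))\); computed with respect to the length-\(1\) resolutions~\eqref{eq:varphi_f_exact_sequence_ab} of the groups \(\K_*(A[e+1,n])\), this boundary map becomes tautological, and what survives is the family \((\delta_e)_{1\le e<n}\) with components \(\delta_e^{[a,b]}\) as in the statement. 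The case distinction ``\(a=e\) and \(b<n\)'' reflects that \(U_e=[e,n]\) is reached from \([a,b]\) only through \([a,n]\), and only when \([a,b]\to[e+1,n]\) holds, which for \(a=e\) forces an odd, boundary-type transformation and hence \(b<n\).

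The main obstacle is this last step. It requires comparing the length-\(1\) resolution~\eqref{eq:varphi_f_exact_sequence_ab}, which comes from the filtrated \(\K\)\nb-theory resolution, with the resolutions~\eqref{eq:resolve_Gx} and~\eqref{eq:projective_resolution_ups_Abel} that underlie the description of \(\Ext^2_{\Abel^X}\) in Lemma~\ref{lem:Ext_2_over_X}, and keeping track, through this comparison, of exactly which matrix entries of \(\varphi^1\) contribute to each \(\delta_e^{[a,b]}\). This amounts to a chain-level chase through several comparison maps, and is where essentially all of the work in Section~\ref{sec:proof_filtrated_theorem} will go; once the two resolutions have been matched up, the stated formula for \(\delta_e^{[a,b]}\) follows by unwinding the relation~\eqref{eq:tau_f_phi_relation} together with the multiplication rule~\eqref{eq:compose_tau}. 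Theorem~\ref{the:obstruction_class_X}, whose proof uses the generic resolution~\eqref{eq:projective_resolution_ups} instead of~\eqref{eq:fil_K_resolution}, provides a consistency check on the answer, since the obstruction class of~\(A\) does not depend on the chosen resolution.
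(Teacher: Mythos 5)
Your strategy is exactly the paper's: apply Theorem~\ref{the:compute_obstruction} to the filtrated-\(\K\)-theory resolution~\eqref{eq:fil_K_resolution}, note that \(P_0\) and~\(P_1\) have length-\(1\) \(\Ideal^X_\K\)\nb-projective resolutions because the groups \(\K_*(P_i[e,n])\) are free, extract the parity-reversing part~\(\varphi^1\), and then translate the resulting class through a comparison of the two length-\(2\) resolutions of~\(A\). But the proposal stops at the translation. You explicitly write that the ``chain-level chase through several comparison maps\ldots is where essentially all of the work in Section~\ref{sec:proof_filtrated_theorem} will go'' and that ``once the two resolutions have been matched up, the stated formula\ldots follows.'' That matching is the content of the theorem, not a routine consequence of the surrounding machinery, and you do not carry it out.

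Concretely, what is missing: the explicit comparison maps \(\gamma_{00}\), \(\gamma_{01}\), \(\gamma_{10}\), \(\gamma_{11}\) and~\(\delta\) filling in the commuting diagram~\eqref{eq:compare_resolutions} between the length-\(2\) \(\Ideal^X_\K\)\nb-projective resolution coming from~\eqref{eq:fil_K_resolution} (via~\eqref{eq:resolution_Pi} and Lemma~\ref{lem:commuting_P_diagram}) and the one assembled from~\eqref{eq:exact_sequence_AjbA} by splicing in the length-\(1\) resolutions~\eqref{eq:varphi_f_exact_sequence_ab}. Verifying that each square of that diagram commutes is a case-by-case matrix computation driven by the multiplication rule~\eqref{eq:compose_tau} and the vanishing relation~\eqref{eq:tau_f_phi_relation}; one must distinguish \(b=n\) from \(b<n\), \(d=n\) from \(d<n\), and the position of~\(e\) relative to \(a,b,c,d\), and then check that a telescoping sum collapses in each case. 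Without those checks there is no derivation of the precise form of \(\delta_e^{[a,b]}\): neither of why the sum runs only over \([c,d] \to [a,n] \to [a,b]\), nor of why the class vanishes when \(a \neq e\), nor of the factor \(\tautr{c,d}{e,n}\) rather than \(\tautr{c,d}{a,b}\), which is what you actually write when describing the composition with \(F^X_\K(p)\) --- a sign that the target \(\K_*(A[e,n])\) rather than \(\K_*(A[a,b])\) has not really been tracked through. Your remark that the case split ``\(a=e\) and \(b<n\)'' ought to come from parity is a good heuristic, and it is indeed borne out by Lemma~\ref{lem:fil_K_parity-reversing}, but it is not a substitute for identifying which matrix entries of~\(\varphi^1\) survive the composition \(\delta \circ \gamma_{11}\), which is the whole point of the comparison.
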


Any \(\Ideal_\mathrm{fil}^X\)-projective
resolution of~\(A\)
is isomorphic to one of the form above because any
\(\Ideal_\mathrm{fil}^X\)-epic
map from an \(\Ideal_\mathrm{fil}^X\)-projective
object to~\(A\)
is isomorphic to a map \(f\in \KK^X_0(P_0,A)\)
as above.  Since \(\Ideal_\mathrm{fil}^X\)-projective
resolutions of~\(A\)
are equivalent to projective resolutions of \(FK(A)\),
the maps~\(\delta_e^{[a,b]}\)
may, in principle, be computed from~\(FK(A)\)
by choosing a projective resolution.  This gives the maps \(f[c,d]\)
and \(\varphi_{[a,b]}^{[c,d]}\).
Such a computation may, of course, be difficult in practice.

\subsection{Proof of the obstruction class formula}
\label{sec:proof_filtrated_theorem}

First we examine the smaller invariant
\(F^X_\K\colon \KKcat^X \to \Abel^X\).
This takes the part of filtrated \(\K\)\nb-theory
consisting of \(\K_*(A[a,n])\)
for \(1 \le a \le n\)
with the maps~\(i\)
between them because the minimal open subset containing~\(a\)
is \(U_a = [a,n]\).
So the diagram~\(F^X_\K(A)\)
is simply the first row in the diagram in
Figure~\ref{fig:filtrated_K}.  We have
\(i_a \C \cong \mathcal{R}_{[a,n]}\)
for \(1\le a \le n\)
because both objects represent the same functor
\(A\mapsto \K_*(A[a,n])\).
So~\(\mathcal{R}_{[a,n]}\)
for \(1 \le a \le n\)
is \(\Ideal^X_\K\)\nb-projective,
and \(F^X_\K(\mathcal{R}_{[a,n]})\) is the diagram
\begin{equation}
  \label{eq:projectives_in_AX}
  \mathcal{P}_{[a,n]} \defeq F^X_\K(\mathcal{R}_{[a,n]}) = \Bigl(
  \underbrace{\Z = \Z = \dotsb = \Z}_{a \text{ times}} \leftarrow 
  \underbrace{0 = \dotsb = 0}_{n-a \text{ times}}
  \Bigr).
\end{equation}
in~\(\Abel^X\).
If \(1 \le a \le b \le n-1\),
then \(F^X_\K(\mathcal{R}_{[a,b]})\) is the diagram
\begin{equation}
  \label{eq:generators_in_AX}
  \mathcal{P}_{[a,b]} \defeq F^X_\K(\mathcal{R}_{[a,b]}) = \Bigl(
  \underbrace{0 = \dotsb = 0}_{a \text{ times}} \leftarrow
  \underbrace{\Z_- = \dotsb = \Z_-}_{b+1-a \text{ times}} \leftarrow
  \underbrace{0 = \dotsb = 0}_{n-1-b \text{ times}}
  \Bigr)
\end{equation}
because of the formula for
\(\K_*(\mathcal{R}_{[a,b]}([c,n]))\)
in~\eqref{eq:NT_explicit}.  Thus the objects~\(\mathcal{R}_{[a,b]}\)
for \(1\le a \le b \le n\) are even if \(b=n\) and odd if \(b<n\).

The formula for the obstruction class in
Theorem~\ref{the:compute_obstruction} uses the parity-reversing part
of \(\varphi\in\KK^X_0(P_1,P_0)\).
This is described by the following lemma:

\begin{lemma}
  \label{lem:fil_K_parity-reversing}
  The component
  \(\bigl(\tautr{c,d}{a,b}\bigr)^* \otimes
  \varphi_{\7{a,b}}^{\7{c,d}}\)
  of~\(\varphi\)
  is parity-reversing if and only if \([c,d] \to [a,n]\to[a,b]\)
  and \(b<n\).
\end{lemma}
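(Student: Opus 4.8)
The plan is to reduce the claim to a parity count that records which homogeneous summands of $P_1$ and $P_0$ the component connects, and then to a short combinatorial check built on~\eqref{eq:NT_explicit} and~\eqref{eq:compose_tau}.

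First I would fix the parities of the building blocks. By~\eqref{eq:projectives_in_AX} and~\eqref{eq:generators_in_AX} the diagram $F^X_\K(\mathcal{R}_{[a,b]})$ is concentrated in even degree when $b=n$ and in odd degree when $b<n$; write $p(a,b)\in\{0,1\}$ for that degree. Since the tensor product~\eqref{def:tensor_with_Cstar} with a $\Z/2$-graded free group inserts a suspension according to the degree, the object $\hat{Q}_i[a,b]=\mathcal{R}_{[a,b]}\otimes Q_i[a,b]$ is a sum of copies of $\mathcal{R}_{[a,b]}$ coming from the even part of $Q_i[a,b]$ and copies of $\Sigma\mathcal{R}_{[a,b]}$ coming from the odd part. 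A copy of $\Sigma^{j}\mathcal{R}_{[a,b]}$ then lies in the even summand $\hat{Q}_i[a,b]^{+}$ of $P_i^{+}$ exactly when $j\equiv p(a,b)\pmod 2$, and in $\hat{Q}_i[a,b]^{-}$ otherwise; the same holds with $[c,d]$ in place of $[a,b]$.

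Second, the parity bookkeeping. Put $\sigma\defeq\deg\tautr{c,d}{a,b}\in\{0,1\}$, so that $\bigl(\tautr{c,d}{a,b}\bigr)^*\in\KK^X_\sigma(\mathcal{R}_{[a,b]},\mathcal{R}_{[c,d]})$; as recalled in the text just above the lemma, $\varphi_{\7{a,b}}^{\7{c,d}}$ carries the same parity $\sigma$, so the component is an arrow of degree $0$ in $\KKcat^X$. Being the external product of a degree-$\sigma$ arrow $\mathcal{R}_{[a,b]}\to\mathcal{R}_{[c,d]}$ with a degree-$\sigma$ homomorphism $Q_1[a,b]\to Q_0[c,d]$, it sends the copies of $\Sigma^{j}\mathcal{R}_{[a,b]}$ in $\hat{Q}_1[a,b]$ into the copies of $\Sigma^{j+\sigma}\mathcal{R}_{[c,d]}$ in $\hat{Q}_0[c,d]$. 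Combined with the first step, this means it sends $\hat{Q}_1[a,b]^{\pm}$ into $\hat{Q}_0[c,d]^{\pm}$ when $\sigma\equiv p(a,b)+p(c,d)\pmod 2$ and into $\hat{Q}_0[c,d]^{\mp}$ when $\sigma\equiv p(a,b)+p(c,d)+1\pmod 2$. So the component is parity-reversing precisely when $\sigma\equiv p(a,b)+p(c,d)+1\pmod 2$ — a condition that does not see whether $\varphi_{\7{a,b}}^{\7{c,d}}$ itself vanishes.

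Third, I would identify this condition combinatorially. A component is present only when $[c,d]\to[a,b]$, and then~\eqref{eq:NT_explicit} gives $\sigma=0$ precisely for $a\le c\le b\le d$ and $\sigma=1$ precisely for $c+1\le a\le d+1\le b$. If $\sigma=0$ then $b\le d$, so $\sigma+p(a,b)+p(c,d)\equiv 1$ forces $d=n$ and $b<n$; if $\sigma=1$ then $d<b\le n$, and the same congruence forces $b<n$ (hence also $d<n$). It remains to see that these two inequality regions coincide with ``$b<n$ together with $[c,d]\to[a,n]\to[a,b]$'', that is, $b<n$ and $\tautr{a,n}{a,b}\cdot\tautr{c,d}{a,n}=\tautr{c,d}{a,b}\neq0$. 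I would check this against~\eqref{eq:compose_tau} with $[a,n]$ as the middle interval, so that the middle transformation is the even restriction $\tautr{a,n}{a,b}=r^{n-b}$: in the even case the relevant clause of~\eqref{eq:compose_tau} reproduces $a\le c\le b$ with $d=n$, and in the odd case $[c,d]\to[a,n]\to[a,b]$ holds automatically because an odd $\tautr{c,d}{a,b}$ always factors as $\tautr{a,n}{a,b}\cdot\tautr{c,d}{a,n}$ with both factors nonzero, leaving only the requirement $b<n$. This last matching of inequality ranges is the only point where any real care is needed; the rest is formal bookkeeping.
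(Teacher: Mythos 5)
Your proof is correct and follows essentially the same route as the paper's: track the parity of the blocks $\mathcal{R}_{[a,b]}$ (even iff $b=n$), note that $\varphi_{[a,b]}^{[c,d]}$ has the same parity $\sigma$ as $\tautr{c,d}{a,b}$, and reduce the question to a parity count that is then matched against~\eqref{eq:compose_tau}. The only difference is cosmetic — you package the four-case inspection in the paper's proof into the single congruence $\sigma\equiv p(a,b)+p(c,d)+1\pmod 2$, which cleanly replaces the paper's ``inspection shows'' with an explicit check of the relevant clauses of~\eqref{eq:compose_tau}.
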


\begin{proof}
  If \(\tautr{c,d}{a,b}\neq0\),
  then either \(a \le c \le b \le d\)
  or \(c+1 \le a \le d+1 \le b\).
  The map
  \(\bigl(\tautr{c,d}{a,b}\bigr)^* \otimes
  \varphi_{\7{a,b}}^{\7{c,d}}\)
  always belongs to \(\KK^X_0(\hat{Q}_1[a,b],\hat{Q}_0[c,d])\).
  So \(\varphi_{\7{a,b}}^{\7{c,d}}\colon Q_1[a,b] \to Q_0[c,d]\)
  is parity-preserving in the first case and parity-reversing in the
  second case.  The object~\(\mathcal{R}_{[a,b]}\)
  is even if \(b=n\)
  and odd if \(b<n\).

  First let \(a \le c \le b \le d\).
  If \(b=n\),
  then \(d=n\)
  as well, so that
  \(\mathcal{R}_{[a,b]}\) and~\(\mathcal{R}_{[c,d]}\)
  have the same parity, and~\(\varphi_{\7{a,b}}^{\7{c,d}}\)
  preserves parity.  So we get a parity-preserving component
  of~\(\varphi\).
  For the same reasons, we get a parity-preserving component if
  \(a \le c \le b\le d<n\),
  and a parity-reversing component if
  \(a \le c \le b < d=n\).

  Now let \(c+1 \le a \le d+1 \le b\),
  so that~\(\varphi_{\7{a,b}}^{\7{c,d}}\)
  reverses parity.  If \(b=n\),
  then \(d<n\),
  so that \(\mathcal{R}_{[a,b]}\)
  and~\(\mathcal{R}_{[c,d]}\)
  have opposite parity.  Hence
  \(\bigl(\tautr{c,d}{a,b}\bigr)^* \otimes
  \varphi_{\7{a,b}}^{\7{c,d}}\)
  is parity-preserving altogether.  If \(b<n\),
  then \(d<n\)
  and so \(\mathcal{R}_{[a,b]}\)
  and~\(\mathcal{R}_{[c,d]}\)
  have the same parity.  Thus
  \(\bigl(\tautr{c,d}{a,b}\bigr)^* \otimes
  \varphi_{\7{a,b}}^{\7{c,d}}\)
  is parity-reversing.  Inspection shows that the parity-reversing
  components are exactly those for which \([c,d] \to [a,n] \to [a,b]\)
  as in~\eqref{eq:compose_tau} and \(b<n\).
\end{proof}

Let \(1 \le a \le b < n\).  The exact triangle
\begin{equation}
  \label{eq:projective_triangle_Rab}
  \Sigma \mathcal{R}_{[a,n]} \xrightarrow{i^*}
  \Sigma \mathcal{R}_{[b+1,n]} \xrightarrow{\delta^*}
  \mathcal{R}_{[a,b]} \xrightarrow{r^*}
  \mathcal{R}_{[a,n]}
\end{equation}
in~\(\KKcat^X\)
is \(\Ideal^X_\K\)\nb-exact,
that is, \(F^X_\K(r^*)=0\).
Since \(\Sigma \mathcal{R}_{[a,n]}\)
and \(\Sigma \mathcal{R}_{[b+1,n]}\)
are \(\Ideal^X_\K\)\nb-projective,
it is an \(\Ideal^X_\K\)\nb-projective
resolution of~\(\mathcal{R}_{[a,b]}\)
of length~\(1\).
Since we allow both odd and even arrows in diagrams, we may drop the
suspensions in~\eqref{eq:projective_triangle_Rab}.  For
\(1 \le a \le b \le n\), let
\[
  M(a,b) \defeq
  \begin{cases}
    a&\text{if }b=n,\\
    b+1&\text{if }b<n.
  \end{cases}
\]
Then
\(\bigl(\tau_{[a,b]}^{[M(a,b),n]}\bigr)^*\colon
\mathcal{R}_{[M(a,b),n]} \to \mathcal{R}_{[a,b]}\)
is an \(\Ideal^X_\K\)\nb-epimorphism
both for \(b=n\)
and \(b<n\).
Its cone is \(\mathcal{R}_{[a,n]}\)
if \(b<n\)
and~\(0\)
if \(b=n\).
Now we write down \(\Ideal^X_\K\)\nb-projective
resolutions of~\(P_i\) for \(i=0,1\).  Let
\begin{align}
  P_{i0} &\defeq
           \bigoplus_{1\le a\le b\le n} \mathcal{R}_{[M(a,b),n]} \otimes Q_i[a,b],\\
  P_{i1} &\defeq
           \bigoplus_{1\le a\le b< n} \mathcal{R}_{[a,n]} \otimes Q_i[a,b].
\end{align}
Then the following is an \(\Ideal^X_\K\)\nb-projective resolution:
\begin{equation}
  \label{eq:resolution_Pi}
  0 \to
  P_{i1} \xrightarrow{\bigoplus \tau^*\otimes\id}
  P_{i0} \xrightarrow{\bigoplus \tau^*\otimes\id}
  P_i \to 0.
\end{equation}
Here \(\bigoplus \tau^* \otimes \id\)
means the direct sum of the maps
\(\bigl(\tau_{[z,w]}^{[x,y]}\bigr)^* \otimes \id_{Q_i[a,b]}\)
between the summands for fixed \(a,b\),
with the appropriate \(x,y,z,w\).
So~\eqref{eq:resolution_Pi} is the direct sum of the
resolutions~\eqref{eq:projective_triangle_Rab}, tensored
with~\(Q_i[a,b]\),
over all \(1\le a\le b<n\),
and the trivial resolutions
\(0 \to \mathcal{R}_{[a,n]} \to \mathcal{R}_{[a,n]}\),
tensored with~\(Q_i[a,b]\),
over all \(1 \le a \le b=n\).
The maps in~\eqref{eq:resolution_Pi} are inhomogeneous, that is, some
components are in \(\KK^X_0\) and others in \(\KK^X_1\).

The resolution~\eqref{eq:resolution_Pi} allows us to compute
\(\KK^X_0(P_1,P_0)\)
with the Universal Coefficient Theorem for the invariant~\(F^X_\K\).
First, the long exact sequence for the direct sum of the exact
triangles~\eqref{eq:projective_triangle_Rab} implies a natural
extension of Abelian groups
\begin{multline*}
  \coker \bigl(\KK^X_*(\Sigma P_{10},P_0) \to \KK^X_*(\Sigma P_{11},P_0)\bigr)
  \into \KK^X_*(P_1,P_0)
  \\\prto \ker \bigl(\KK^X_*(P_{10},P_0) \to \KK^X_*(P_{11},P_0)\bigr).
\end{multline*}
We may rewrite the kernel and cokernel here as \(\Hom\)
and \(\Ext\)
in~\(\Abel^X\),
using that~\eqref{eq:resolution_Pi} is an
\(\Ideal^X_\K\)\nb-projective
resolution.  The extension above splits unnaturally, giving the
decomposition of~\(\varphi\)
into its parity-preserving and -reversing parts \(\varphi^+\)
and~\(\varphi^-\), respectively.

\begin{lemma}
  \label{lem:odd_phi}
  The image of the parity-reversing part~\(\varphi^-\)
  of~\(\varphi\) in \(\Ext^1_{\Ideal^X_\K}(P_1,P_0)\) is the map
  \[
  P_{11}
  = \bigoplus_{1 \le a \le b <n} \mathcal{R}_{[a,n]}\otimes Q_1[a,b]
  \to \bigoplus_{1 \le c \le d \le n} \mathcal{R}_{[c,d]}\otimes Q_0[c,d]
  = P_0
  \]
  with matrix coefficients
  \(\bigl(\tautr{c,d}{a,n}\bigr)^* \otimes
  \varphi_{\7{a,b}}^{\7{c,d}}\)
  if \([c,d] \to [a,n] \to [a,b]\) and \(b<n\), and~\(0\) otherwise.
\end{lemma}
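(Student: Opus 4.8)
The plan is to read off the \(\Ext^1\)-class of~\(\varphi\) from the \(\Ideal^X_\K\)\nb-projective resolution~\eqref{eq:resolution_Pi} of the source~\(P_1\).  First I would present~\eqref{eq:resolution_Pi} as an exact triangle
\[
P_{11} \xrightarrow{d_{11}} P_{10} \xrightarrow{d_{10}} P_1 \xrightarrow{\omega} \Sigma P_{11}
\]
in~\(\KKcat^X\).  Since~\eqref{eq:resolution_Pi} is the direct sum of the resolutions~\eqref{eq:projective_triangle_Rab} tensored with~\(Q_1[a,b]\) over all \(1\le a\le b<n\), together with trivial summands for \(b=n\), its connecting map~\(\omega\) is the direct sum of the maps \(r^*\otimes\id_{Q_1[a,b]} = \bigl(\tautr{a,n}{a,b}\bigr)^*\otimes\id_{Q_1[a,b]}\colon \mathcal{R}_{[a,b]}\otimes Q_1[a,b]\to\mathcal{R}_{[a,n]}\otimes Q_1[a,b]\) for \(b<n\), and~\(\omega\) vanishes on the summands with \(b=n\); in particular~\(\omega\) is diagonal in the \([a,b]\)-index.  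Because \(P_{10}\) and~\(P_{11}\) are \(\Ideal^X_\K\)\nb-projective, \(F^X_\K\) identifies the groups \(\KK^X_*(P_{1j},P_0)\) for \(j=0,1\) with \(\Hom\)-groups in~\(\Abel^X\), so the cokernel term of the extension displayed just before the lemma is \(\Ext^1_{\Ideal^X_\K}(P_1,P_0) = \coker\bigl((\Sigma d_{11})^*\bigr)\), with the inclusion into \(\KK^X_*(P_1,P_0)\) sending the class of \(\eta\colon\Sigma P_{11}\to P_0\) to \(\eta\circ\omega\).  So it would suffice to produce \(\tilde\eta\colon\Sigma P_{11}\to P_0\) with \(\varphi^-=\tilde\eta\circ\omega\) and to write down its components.

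Next I would take~\(\tilde\eta\) to be the map whose component from the \([a,b]\)-summand \(\mathcal{R}_{[a,n]}\otimes Q_1[a,b]\) of~\(\Sigma P_{11}\) to the \([c,d]\)-summand \(\mathcal{R}_{[c,d]}\otimes Q_0[c,d]\) of~\(P_0\) is \(\bigl(\tautr{c,d}{a,n}\bigr)^*\otimes\varphi_{\7{a,b}}^{\7{c,d}}\) when \([c,d]\to[a,n]\to[a,b]\) and \(b<n\), and~\(0\) otherwise; this is legitimate because \([c,d]\to[a,n]\to[a,b]\) forces \([c,d]\to[a,b]\), so that \(\varphi_{\7{a,b}}^{\7{c,d}}\) is defined.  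To verify \(\varphi^-=\tilde\eta\circ\omega\) I would compare, for each pair \((a,b)\) and \((c,d)\), the component from the \([a,b]\)-summand of~\(P_1\) to the \([c,d]\)-summand of~\(P_0\).  By Lemma~\ref{lem:fil_K_parity-reversing}, that component of~\(\varphi\) is \(\bigl(\tautr{c,d}{a,b}\bigr)^*\otimes\varphi_{\7{a,b}}^{\7{c,d}}\) — a homogeneous element, since \(\varphi_{\7{a,b}}^{\7{c,d}}\) has the same parity as~\(\tautr{c,d}{a,b}\) — and it is parity-reversing, hence a component of~\(\varphi^-\) rather than of~\(\varphi^+\), exactly when \([c,d]\to[a,n]\to[a,b]\) and \(b<n\).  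On the other side, since~\(\omega\) is diagonal in~\([a,b]\) and zero for \(b=n\), and since \(\bigl(\tautr{c,d}{a,n}\bigr)^*\circ\bigl(\tautr{a,n}{a,b}\bigr)^* = \bigl(\tautr{c,d}{a,b}\bigr)^*\) precisely when \([c,d]\to[a,n]\to[a,b]\) by the multiplication rule~\eqref{eq:compose_tau}, the corresponding component of \(\tilde\eta\circ\omega\) is \(\bigl(\tautr{c,d}{a,b}\bigr)^*\otimes\varphi_{\7{a,b}}^{\7{c,d}}\) in that same range and \(0\) otherwise.  So the two components agree in every case, giving \(\varphi^-=\tilde\eta\circ\omega\); hence the image of~\(\varphi^-\) in \(\Ext^1_{\Ideal^X_\K}(P_1,P_0)\) is the class of~\(\tilde\eta\), which (identifying \(\Sigma P_{11}\) with \(P_{11}\)) is the map asserted in the lemma.

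Finally, the existence of \emph{some} factorisation \(\varphi^-=\eta\circ\omega\) is automatic and worth recording: \(\varphi^-\) lies in~\(\Ideal^X_\K\), so \(\varphi^-\circ d_{10}\) does too, and it therefore vanishes because all graded morphisms out of the \(\Ideal^X_\K\)\nb-projective object~\(P_{10}\) are detected by~\(F^X_\K\); hence~\(\varphi^-\) factors through the cofibre~\(\Sigma P_{11}\) of~\(d_{10}\).  The step I expect to be delicate is the componentwise identification of~\(\tilde\eta\): it forces one to line up the parity bookkeeping of Lemma~\ref{lem:fil_K_parity-reversing} with the multiplication table~\eqref{eq:compose_tau}, checking that \(\bigl(\tautr{c,d}{a,n}\bigr)^*\circ\bigl(\tautr{a,n}{a,b}\bigr)^*\) is nonzero — and equal to \(\bigl(\tautr{c,d}{a,b}\bigr)^*\) — in exactly the range \([c,d]\to[a,n]\to[a,b]\), \(b<n\) in which the corresponding component of~\(\varphi\) is parity-reversing, and that in the complementary cases — \([c,d]\not\to[a,n]\); or \([c,d]\to[a,n]\) but the composite through~\([a,n]\) vanishes; or \(b=n\) — both \(\varphi^-\) and \(\tilde\eta\circ\omega\) have vanishing component there.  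The structural reason this works is that the source of the resolution of each summand \(\mathcal{R}_{[a,b]}\otimes Q_1[a,b]\) is~\(\mathcal{R}_{[a,n]}\otimes Q_1[a,b]\), equivalently that \(\tautr{c,d}{a,b}\) factors through~\([a,n]\) whenever it factors at all.
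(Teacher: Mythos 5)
Your proposal is correct and takes essentially the same approach as the paper: identify the connecting map of the resolution~\eqref{eq:resolution_Pi} as the diagonal map $\bigoplus r^*\otimes\id$ (with zero on the $b=n$ summands), use Lemma~\ref{lem:fil_K_parity-reversing} to isolate the parity-reversing matrix entries of~$\varphi$, and factor each of these through $r^*\otimes\id$ via~\eqref{eq:compose_tau}, so that the resulting map $P_{11}\to P_0$ represents the image of $\varphi^-$ in $\Ext^1_{\Ideal^X_\K}(P_1,P_0)$. The paper states the factorisation more tersely; you spell out the identification $\Ext^1\cong\coker\bigl((\Sigma d_{11})^*\bigr)$ and the componentwise verification, but the content is the same.
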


\begin{proof}
  In terms of the matrix description of~\(\varphi\),
  each matrix entry
  \(\bigl(\tautr{c,d}{a,b}\bigr)^* \otimes
  \varphi_{\7{a,b}}^{\7{c,d}}\)
  has even or odd parity and thus belongs to either \(\Hom\)
  or \(\Ext\),
  respectively.  By Lemma~\ref{lem:fil_K_parity-reversing}, the entry
  belongs to~\(\varphi^-\)
  if and only if~\(\tautr{c,d}{a,b}\)
  factors through
  \(r^*\colon \mathcal{R}_{[a,b]} \to \mathcal{R}_{[a,n]}\).
  In this case, it factors as
  \(\Bigl(\bigl(\tautr{c,d}{a,n}\bigr)^* \otimes
  \varphi_{\7{a,b}}^{\7{c,d}}\Bigr) \circ (r^*\otimes \id)\).
  Since~\(r^*\)
  is the boundary map in~\eqref{eq:projective_triangle_Rab}, this
  exhibits a map \(P_{11} \to P_0\).
  The map from \(\Ext^1_{\Abel^X}\)
  to \(\KK^X_0\)
  in the UCT is defined by composing with the boundary map in the
  exact triangle that contains the given resolution.  So the map
  \(P_{11} \to P_0\)
  found above is the relevant component of~\(\varphi\).
  The formula in the lemma follows.
\end{proof}

According to the recipe in Theorem~\ref{the:compute_obstruction}, the
obstruction class in \(\Ext^2_{\Ideal^X_\K}(\Sigma A, A)\)
is the composite of the parity-reversing part of~\(\varphi\),
viewed as an element of \(\Ext^1_{\Ideal^X_\K}(P_1,P_0)\),
with \(f\in \KK^X_0(P_0, A)\)
and with the class of the extension~\eqref{eq:fil_K_resolution} in
\(\Ext_{\Ideal^X_\K}(A,P_1)\).
Composing the two extensions gives a length-\(2\) resolution
\begin{equation}
  \label{eq:resolution_1_for_A}
  P_{11} \into  P_{10} \to P_0 \prto A.  
\end{equation}
The component \(\mathcal{R}_{[a,n]}\otimes Q_1[a,b] \to A\)
in the composite map \(P_{11} \to P_0 \to A\)
is the map \(\mathcal{R}_{[a,n]}\otimes Q_1[a,b] \to A\)
that corresponds to
\begin{equation}
  \label{eq:odd_phi_explicit_filtered}
  \sum_{[c,d] \to [a,n] \to [a,b]}
  \tau_{[c,d]}^{[a,n]}\circ f[c,d] \circ \varphi_{[a,b]}^{[c,d]}
  \colon Q_1[a,b] \to \K_*(A[a,n])
\end{equation}
under the isomorphism~\eqref{eq:KK_out_of_RY}.  Here the sum runs only
over those \([c,d]\)
with \([c,d] \to [a,n] \to [a,b]\)
as in Lemma~\ref{lem:odd_phi}.  In contrast, the sum over all
\([c,d]\)
with \([c,d] \to [a,b]\) is~\(0\) by~\eqref{eq:tau_f_phi_relation}.

In a sense, we have now computed the obstruction class.  The
length-\(2\)
resolution in~\eqref{eq:resolution_1_for_A} is, however, different
from the one that is implicitly used in
Theorem~\ref{the:obstruction_class_X} to compute the relevant
\(\Ext^2\)-group
and the obstruction class in it.  To translate the formula for the
obstruction class that we get from filtrated \(\K\)\nb-theory
into the setting of Theorem~\ref{the:obstruction_class_X}, we must
compare the underlying length-\(2\)
resolutions.  First, we replace the resolution
in~\eqref{eq:resolution_1_for_A} by one that is
\(\Ideal^X_\K\)\nb-projective.

The entries \(P_{10}\)
and~\(P_{11}\)
are already \(\Ideal^X_\K\)\nb-projective,
and~\eqref{eq:resolution_Pi} is an \(\Ideal^X_\K\)\nb-projective
resolution of~\(P_0\).
The objects \(P_i\)
and~\(P_{ij}\)
are all sums over \(1\le a \le b \le n\),
with summands of the form \(\mathcal{R}_{[x,y]} \otimes Q_i[a,b]\)
for suitable \(x,y\)
depending on \(a,b\);
the summands in~\(P_{i1}\)
are~\(0\)
for \(b=n\).
Let \((\tau^* \otimes \varphi)\)
denote the map between these sums for \(i=1\)
to those for \(i=0\) with matrix entries
\[
\bigl(\tau_{[z,w]}^{[x,y]}\bigr)^* \otimes \varphi_{[a,b]}^{[c,d]}\colon
\mathcal{R}_{[x,y]} \otimes Q_1[a,b]
\to \mathcal{R}_{[z,w]} \otimes Q_0[c,d].
\]
As usual, \(\tau_{[z,w]}^{[x,y]}=0\) if \(\mathcal{NT}_*([z,w],[x,y]) =0\).

\begin{lemma}
  \label{lem:commuting_P_diagram}
  There is a commuting diagram
  \[
  \begin{tikzcd}[column sep = huge]
    P_{11} \arrow[r, rightarrowtail, "\bigoplus \tau^*\otimes \id"]
    \arrow[d, "(\tau^* \otimes \varphi)"] &
    P_{10}
    \arrow[r, twoheadrightarrow, "\bigoplus \tau^*\otimes \id"]
    \arrow[d, "(\tau^* \otimes \varphi)"] &
    P_1 \arrow[d, "(\tau^* \otimes \varphi)"] \\
    P_{01} \arrow[r, rightarrowtail, "\bigoplus \tau^*\otimes \id"'] &
    P_{00}
    \arrow[r, twoheadrightarrow, "\bigoplus \tau^*\otimes \id"'] &
    P_0
  \end{tikzcd}
  \]
\end{lemma}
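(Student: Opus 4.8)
The claim is that the two squares of the displayed diagram commute, its rows being the \(\Ideal^X_\K\)\nb-projective resolutions~\eqref{eq:resolution_Pi} of \(P_1\) and \(P_0\) and its right-hand vertical map being, by the prescribed formula, the map \(\varphi\) of~\eqref{eq:varphi_components}. That \emph{some} chain map lifting \(\varphi\) along these resolutions exists is automatic by \cite{Meyer-Nest:Homology_in_KK}*{Proposition~44}; what has to be shown is that the maps written down do the job. So the plan is to verify commutativity of the two squares directly, by reducing to a family of small diagrams.

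The first step is the reduction. As recalled just above~\eqref{eq:resolution_Pi}, each resolution \(P_{i1}\into P_{i0}\prto P_i\), \(i=0,1\), is the direct sum over \(1\le a\le b\le n\) of the exact triangle~\eqref{eq:projective_triangle_Rab} for \([a,b]\) when \(b<n\), and of the trivial resolution \(0\to\mathcal{R}_{[a,n]}\xrightarrow{=}\mathcal{R}_{[a,n]}\) when \(b=n\), each tensored over \(\Z\) with the free \(\Z/2\)\nb-graded Abelian group \(Q_i[a,b]\). In particular the horizontal maps \(\bigoplus\tau^*\otimes\id\) are block diagonal in the index \((a,b)\) and do not mix blocks, while the vertical maps carry the \([a,b]\)\nb-block into the \([c,d]\)\nb-block by tensoring the fixed homomorphism \(\varphi_{[a,b]}^{[c,d]}\) with \((\tautr{c,n}{a,n})^*\) at the level of \(P_{i1}\), with \((\tautr{M(c,d),n}{M(a,b),n})^*\) at the level of \(P_{i0}\), and with \((\tautr{c,d}{a,b})^*\) at the level of \(P_i\). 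Consequently the \((a,b)\to(c,d)\)\nb-component of the commutator of either big square is \(\varphi_{[a,b]}^{[c,d]}\) tensored with the corresponding commutator of the ladder
\[
  \begin{tikzcd}[column sep=large]
    \mathcal{R}_{[a,n]} \arrow[r] \arrow[d] &
    \mathcal{R}_{[M(a,b),n]} \arrow[r] \arrow[d] &
    \mathcal{R}_{[a,b]} \arrow[d, "{(\tautr{c,d}{a,b})^*}"] \\
    \mathcal{R}_{[c,n]} \arrow[r] &
    \mathcal{R}_{[M(c,d),n]} \arrow[r] &
    \mathcal{R}_{[c,d]}
  \end{tikzcd}
\]
whose rows are~\eqref{eq:projective_triangle_Rab} with the suspensions dropped and whose middle and left verticals are \((\tautr{M(c,d),n}{M(a,b),n})^*\) and \((\tautr{c,n}{a,n})^*\). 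It therefore suffices to show that this ladder commutes for every pair \([c,d]\to[a,b]\), with entries involving an absent \(P_{i1}\)\nb-summand (second coordinate \(n\)) or the zero object (the degenerate rows for \(b=n\) or \(d=n\)) simply omitted.

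The second step is to verify the ladder. All the maps between the representables \(\mathcal{R}_{[\cdot,\cdot]}\) are of the form \((\tautr{x,y}{z,w})^*\), and by the defining property of these objects a composite of two of them is either zero or again such a map, governed by the multiplication rule~\eqref{eq:compose_tau} read contravariantly; so commutativity of one of the two squares of the ladder comes down to checking that the two ``through'' conditions of~\eqref{eq:compose_tau} pertinent to its two composites hold simultaneously --- and then produce the same generator --- or fail simultaneously. Given \([c,d]\to[a,b]\), the right square reduces to an instance of~\eqref{eq:compose_tau} comparing the composite \(\mathcal{R}_{[M(a,b),n]}\to\mathcal{R}_{[a,b]}\to\mathcal{R}_{[c,d]}\) with \(\mathcal{R}_{[M(a,b),n]}\to\mathcal{R}_{[M(c,d),n]}\to\mathcal{R}_{[c,d]}\), and the left square to comparing \(\mathcal{R}_{[a,n]}\to\mathcal{R}_{[M(a,b),n]}\to\mathcal{R}_{[M(c,d),n]}\) with \(\mathcal{R}_{[a,n]}\to\mathcal{R}_{[c,n]}\to\mathcal{R}_{[M(c,d),n]}\).

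I expect this last verification to be the main obstacle. It requires a case analysis splitting the hypothesis \([c,d]\to[a,b]\) into its even branch \(a\le c\le b\le d\) and its odd branch \(c+1\le a\le d+1\le b\), crossed with the definition \(M(a,b)=a\) if \(b=n\) and \(M(a,b)=b+1\) if \(b<n\) and with the degenerate possibilities \(b=n\), \(d=n\) and \(b<n=d\). Since a product \(\tautr{e,g}{c',d'}\cdot\tautr{a',b'}{e,g}\) can vanish even when all three generators are non-zero --- as~\eqref{eq:compose_tau} makes explicit --- each sub-case genuinely has to be inspected to confirm that the two composites around a square agree, usually because they are the same non-zero generator but in the degenerate sub-cases because both vanish (one of the two factors, or one of the two blocks, being already zero). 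The conceptual point that makes the bookkeeping close up is that \(M(a,b)\) is defined precisely so that \((\tautr{c,d}{a,b})^*\) always factors through the maps \(r^*\), \(\delta^*\) or the identity of~\eqref{eq:projective_triangle_Rab} in exactly the way the ladder predicts.
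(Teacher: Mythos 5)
Your reduction is sound and matches the paper's own strategy: the horizontal maps are block-diagonal in the summand index, so comparing matrix coefficients — equivalently, checking your $\mathcal{R}$-valued ladder for each pair $[c,d]\to[a,b]$ — is exactly what has to be done, and your identification of the five relevant vertical and horizontal $\tau^*$-maps (with the degenerate cases $b=n$, $d=n$ dropping the left column of $P_{i1}$- or $P_{01}$-summands) is correct.

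However, the proposal stops precisely where the content of the lemma begins. You write that you \emph{expect} the case analysis to close up, describe which sub-cases must be inspected, and then do not inspect them. The entire substance of the lemma is that inspection, and it is not automatic: as you yourself note, a product $\tautr{e,g}{c',d'}\cdot\tautr{a',b'}{e,g}$ can vanish even when all three generators are non-zero, so there is a real danger that the two composites around a square vanish on different sets of $(a,b,c,d)$. Concretely, for the left square one must show that, under $[c,d]\to[a,b]$ and $b<n$, the condition $[M(c,d),n]\to[M(a,b),n]\to[a,n]$ holds if and only if $[M(c,d),n]\to[c,n]\to[a,n]$ holds \emph{and} $d<n$; the argument genuinely splits on $d=n$ versus $d<n$ and on the even/odd branch of $[c,d]\to[a,b]$, and in the $d=n$ case requires noticing a contradiction with $[c,n]\to[a,b]$. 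For the right square one must show that both through-conditions are implied by $\tautr{c,d}{M(a,b),n}\neq0$ and $[c,d]\to[a,b]$, which uses the specific feature that any natural transformation $\K_*(A[c,d])\to\K_*(A[e,n])$ factors through $\tautr{c,d}{M(c,d),n}$. None of this is in the proposal, so the proof is incomplete: you have correctly reduced the problem to the same family of checks that the paper performs, but the checks themselves — which carry all the weight — are missing.
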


\begin{proof}
  We compare maps between direct sums by comparing their matrix
  coefficients.  For the two composite maps \(P_{11} \to P_{00}\),
  these are maps
  \(\mathcal{R}_{[a,n]} \otimes Q_1[a,b] \to \mathcal{R}_{[M(c,d),n]}
  \otimes Q_0[c,d]\)
  for \(1 \le a \le b <n\)
  and \(1 \le c \le d \le n\).
  The composite map through~\(P_{10}\)
  is
  \(\bigl(\tautr{M(c,d),n}{a,n}\bigr)^* \otimes
  \varphi_{[a,b]}^{[c,d]}\)
  if \([a,n] \leftarrow [M(a,b),n] \leftarrow [M(c,d),n]\),
  and~\(0\)
  otherwise; and the composite map through~\(P_{01}\)
  is
  \(\bigl(\tautr{M(c,d),n}{a,n}\bigr)^* \otimes
  \varphi_{[a,b]}^{[c,d]}\)
  if \([a,n] \leftarrow [c,n] \leftarrow [M(c,d),n]\)
  and \(d<n\),
  and~\(0\)
  otherwise; the condition \(d<n\)
  comes in because~\(P_{01}\)
  contains only summands \(\mathcal{R}_{[c,n]} \otimes Q_0[c,d]\)
  with \(1 \le c \le d <n\).
  In both cases, the map vanishes unless \([c,d] \to [a,b]\)
  because of the factor~\(\varphi_{[a,b]}^{[c,d]}\).
  We claim that if \([c,d] \to [a,b]\)
  and \(b<n\),
  then \([a,n] \leftarrow [M(a,b),n] \leftarrow [M(c,d),n]\)
  if and only if \([a,n] \leftarrow [c,n] \leftarrow [M(c,d),n]\)
  and \(d<n\);
  here \(M(a,b)=b+1\)
  because \(b<n\).
  Indeed, if \(d=n\),
  then \(M(c,d)=c\),
  and \([a,n] \leftarrow [b+1,n] \leftarrow [c,n]\)
  means \(a \le b+1 \le c\),
  which contradicts \([c,n] \to [a,b]\).
  If \(d<n\),
  then \(M(c,d)=d+1\).
  Then \([a,n] \leftarrow [c,n] \leftarrow [d+1,n]\)
  and \([a,n] \leftarrow [M(a,b),n] \leftarrow [M(c,d),n]\)
  are equivalent to \(a \le c \le d+1\)
  and \(a \le b+1 \le d+1\),
  respectively.  If \([c,d] \to [a,b]\),
  both conditions say that we are in the case \(a \le c \le b \le d\).
  The computations above show that the two maps \(P_{11} \to P_{00}\)
  are equal.

  Now consider the two maps \(P_{10} \to P_0\).
  Its matrix coefficients are maps
  \[
  \mathcal{R}_{[M(a,b),n]} \otimes Q_1[a,b] \to \mathcal{R}_{[c,d]} \otimes Q_0[c,d],
  \qquad
  1 \le a \le b \le n,\ 1 \le c \le d \le n.
  \]
  As above, the composite maps through~\(P_{00}\)
  and~\(P_1\)
  are
  \(\bigl(\tautr{c,d}{M(a,b),n}\bigr)^* \otimes
  \varphi_{[a,b]}^{[c,d]}\)
  or~\(0\).
  For the maps through \(P_1\)
  and~\(P_{00}\),
  the former case occurs if
  \([M(a,b),n] \leftarrow [a,b] \leftarrow [c,d]\)
  or \([M(a,b),n] \leftarrow [M(c,d),n] \leftarrow [c,d]\),
  respectively.  We may assume \([a,b] \leftarrow [c,d]\)
  and \([M(a,b),n] \leftarrow [c,d]\)
  because otherwise \(\varphi_{[a,b]}^{[c,d]} = 0\)
  or \(\tautr{c,d}{M(a,b),n}=0\).
  Under these assumptions,
  \([M(a,b),n] \leftarrow [a,b] \leftarrow [c,d]\)
  always holds by~\eqref{eq:compose_tau}.  And
  \([c,d] \to [M(a,b),n]\)
  implies \([M(a,b),n] \leftarrow [M(c,d),n] \leftarrow [c,d]\)
  because any natural transformation \(\K_*(A[c,d]) \to \K_*(A[e,n])\)
  for some \(1 \le e \le n\)
  factors through \(\tautr{c,d}{M(c,d),n}\).
  So the two maps \(P_{10} \to P_0\) are equal as well.
\end{proof}

Using also the resolution~\eqref{eq:fil_K_resolution} of~\(A\),
we get the following \(\Ideal^X_\K\)\nb-projective
resolution of~\(A\):
\[
P_{11} \xrightarrow{\begin{pmatrix}
    -(\tau^* \otimes \varphi)\\ \bigoplus \tau^* \otimes \id
  \end{pmatrix}}
P_{01} \oplus P_{10}
\xrightarrow{\begin{pmatrix} \bigoplus \tau^* \otimes \id&
    (\tau^* \otimes \varphi)
  \end{pmatrix}}
P_{00} \xrightarrow{\bigoplus f[a,b]^\# \circ (\tau^* \otimes \id)}
A.
\]

The computation of the obstruction class in
Theorem~\ref{the:classification_X_obstruction} starts with the
following \(\Ideal^X\)\nb-projective
resolution of length~\(1\) in~\(\KKcat^X\):
\begin{equation}
  \label{eq:exact_sequence_AjbA}
  \bigoplus_{b=1}^{n-1} \mathcal{R}_{[b,n]} \otimes A[b+1,n] 
  \into
  \bigoplus_{b=1}^n \mathcal{R}_{[b,n]} \otimes A[b,n]
  \prto
  A.
\end{equation}
Since \(\mathcal{R}_{[b,n]} = i_b(\C)\)
in the notation of Section~\ref{sec:UCT_X}, we have
\(\mathcal{R}_{[b,n]} \otimes A[b,n] \cong i_b(A[b,n])\).
The restriction of the second map in~\eqref{eq:exact_sequence_AjbA} to
this direct summand is the one that corresponds to the identity map
on~\(A[b,n]\)
under the isomorphism in~\eqref{eq:KK_out_ix}.  The first map
in~\eqref{eq:exact_sequence_AjbA}, restricted to the summand
\(\mathcal{R}_{[b,n]} \otimes A[b+1,n]\),
is the difference of the two maps
\begin{alignat*}{2}
  (\tautr{b+1,n}{b,n})^*\otimes \id &&\colon
  \mathcal{R}_{[b,n]}\otimes A[b+1,n]
  &\to \mathcal{R}_{[b+1,n]}\otimes A[b+1,n],\\
  \id\otimes \tautr{b+1,n}{b,n}&&\colon
  \mathcal{R}_{[b,n]} \otimes A[b+1,n]
  &\to \mathcal{R}_{[b,n]} \otimes A[b,n],
\end{alignat*}
where \(\tautr{b+1,n}{b,n}\)
denotes the inclusion of~\(A[b+1,n]\)
into~\(A[b,n]\);
we could have written~\(i\)
for~\(\tautr{b+1,n}{b,n}\)
as in Figure~\ref{fig:filtrated_K}.  It is shown in
Section~\ref{sec:UCT_X} that this sequence is \(\Ideal^X\)\nb-exact.
And it is easy to prove this directly.

The projective resolutions of Abelian groups
in~\eqref{eq:varphi_f_exact_sequence_ab} imply that there is an
\(\Ideal^X_\K\)\nb-projective resolution
\[
\bigoplus_{[c,d] \to [b+1,n]} \mathcal{R}_{[b,n]} \otimes Q_1[c,d]
\into
\bigoplus_{[c,d] \to [b+1,n]} \mathcal{R}_{[b,n]} \otimes Q_0[c,d]
\prto \bigoplus_{b=1}^{n-1} \mathcal{R}_{[b,n]} \otimes A[b+1,n].
\]
Splicing it with the resolution in~\eqref{eq:exact_sequence_AjbA}
gives an \(\Ideal^X_\K\)\nb-exact chain complex
\[
W_1 \into W_0
\to \bigoplus_{b=1}^n \mathcal{R}_{[b,n]} \otimes A[b,n]
\prto A
\]
with
\begin{equation}
  \label{eq:def_Wi}
  W_i \defeq \bigoplus_{b=1}^{n-1} \bigoplus_{[c,d] \to [b+1,n]}
  \mathcal{R}_{[b,n]} \otimes Q_i[c,d],\qquad i=0,1.
\end{equation}

Next we are going to compare the two \(\Ideal^X_\K\)\nb-exact
chain complexes built above.  We are going to build maps
\(\gamma_{ij}\)
and~\(\delta\)
for \(0\le i,j \le 1\)
that make the following diagram commute, and such that~\(\delta\)
gives the obstruction class:
\begin{equation}
  \label{eq:compare_resolutions}
  \begin{tikzcd}[column sep = large]
    P_{11} \arrow[r, rightarrowtail]
    \arrow[d, "\gamma_{11}"] &
    P_{01} \oplus P_{10}
    \arrow[r]
    \arrow[d, "(\gamma_{01}\ \gamma_{10})"] &
    P_{00} \arrow[r, twoheadrightarrow] \arrow[d, "\gamma_{00}"] &
    A \arrow[d, equal] \\
    W_1
    \arrow[r, rightarrowtail] \arrow[d, "\delta"] &
    W_0
    \arrow[r, twoheadrightarrow] &
    \bigoplus_{e=1}^n \mathcal{R}_{[e,n]} \otimes A[e,n] \arrow[r] &
    A\\
    A
  \end{tikzcd}
\end{equation}
We describe maps between direct sums through matrices of maps between
the direct summands.  Recall that~\(W_i\)
is defined in~\eqref{eq:def_Wi} and that
\[
P_{i0} \defeq \bigoplus_{1\le a\le b\le n}
\mathcal{R}_{[M(a,b),n]} \otimes Q_i[a,b],\qquad
P_{i1} \defeq \bigoplus_{1\le a\le b< n} \mathcal{R}_{[a,n]} \otimes Q_i[a,b]
\]
for \(i=0,1\).  The matrix coefficients of~\(\gamma_{00}\) are maps
\[
\gamma_{00}^{e,[a,b]}\colon
\mathcal{R}_{[M(a,b),n]} \otimes Q_0[a,b] \to \mathcal{R}_{[e,n]}
\otimes A[e,n]
\]
for \(1 \le a \le b \le n\)
and \(1 \le e \le n\).
We let \(\gamma_{00}^{e,[a,b]}=0\)
if \(e\neq M(a,b)\).
Let \(e=M(a,b)\).  Then \(\gamma_{00}^{e,[a,b]}\) corresponds to a map
\[
(\gamma_{00}^{e,[a,b]})^\flat\colon
Q_0[a,b] \to \K_*(\mathcal{R}_{[e,n]}[e,n] \otimes A[e,n])
\]
under the isomorphism~\eqref{eq:KK_out_of_RY}.  We have already used
above that \(\mathcal{R}_{[e,n]} \otimes A[e,n] \cong i_{e}(A[e,n])\);
so
\begin{equation}
  \label{eq:Ren_Aen}
  \mathcal{R}_{[e,n]}[e,n] \otimes A[e,n] \cong A[e,n].  
\end{equation}
Using this isomorphism implicitly, we let
\[
(\gamma_{00}^{e,[a,b]})^\flat \defeq
\tautr{a,b}{e,n}\circ f[a,b]\colon
Q_0[a,b] \xrightarrow{f[a,b]}
\K_*(A[a,b]) \xrightarrow{\tautr{a,b}{e,n}}
\K_*(A[e,n]).
\]
As usual, this is~\(0\)
unless \([a,b] \to [e,n]\).
The map \(\gamma_{01}\colon P_{01} \to W_0\)
is given by a matrix of maps
\[
\gamma_{01}^{e,[c,d],[a,b]}\colon
\mathcal{R}_{[a,n]} \otimes Q_0[a,b] \to \mathcal{R}_{[e,n]} \otimes
Q_0[c,d]
\]
for \(1 \le a \le b < n\),
\(1 \le e < n\),
and \(1 \le c \le d \le n\) with \([c,d] \to [e+1,n]\).  We let
\[
\gamma_{01}^{e,[c,d],[a,b]}\defeq
\begin{cases}
  \bigl(\tautr{e,n}{a,n}\bigr)^* \otimes \id_{Q_0[a,b]}
  &\text{if } [a,b]=[c,d],\\
  0&\text{otherwise.}
\end{cases}
\]
If \(a=c\)
and \(b=d<n\),
then \([c,d] \to [e+1,n]\)
if and only if \(a \le e \le b\),
so that \(\tautr{e,n}{a,n}\neq0\) in this formula.

The map \(\gamma_{10}\colon P_{10} \to W_0\) is given by a matrix of maps
\[
\gamma_{10}^{e,[c,d],[a,b]} \colon
\mathcal{R}_{[M(a,b),n]} \otimes Q_1[a,b] \to \mathcal{R}_{[e,n]}
\otimes Q_0[c,d]
\]
for \(1 \le a \le b \le n\),
\(1 \le e < n\),
and \(1 \le c \le d \le n\) with \([c,d] \to [e+1,n]\).  We let
\[
\gamma_{10}^{e,[c,d],[a,b]} \defeq
\begin{cases}
  \bigl(\tautr{e,n}{M(a,b),n}\bigr)^* \otimes \varphi_{[a,b]}^{[c,d]}
  &\text{if }M(a,b) \le e < M(c,d),\\
  0&\text{otherwise.}
\end{cases}
\]
The map \(\gamma_{11}\colon P_{11} \to W_1\)
is given by a matrix of maps
\[
\gamma_{11}^{e,[c,d],[a,b]}\colon
\mathcal{R}_{[a,n]} \otimes Q_1[a,b] \to
\mathcal{R}_{[e,n]} \otimes Q_1[c,d]
\]
for \(1 \le a \le b < n\),
\(1 \le e < n\),
and \(1 \le c \le d \le n\) with \([c,d] \to [e+1,n]\).  We let
\[
\gamma_{11}^{e,[c,d],[a,b]}\defeq
\begin{cases}
  \bigl(\tautr{e,n}{a,n}\bigr)^* \otimes \id_{Q_1[a,b]}
  &\text{if }a=c,\ b=d,\\
  0&\text{otherwise}.
\end{cases}
\]
The map~\(\delta\)
is given by a family of maps
\(\delta_{b,[c,d]}\colon \mathcal{R}_{[b,n]} \otimes Q_1[c,d] \to A\)
for \(1 \le b <n\)
and \([c,d] \to [b+1,n]\).
These correspond to maps
\(\delta_{b,[c,d]}^\flat \colon Q_1[c,d] \to \K_*(A[b,n])\)
by the isomorphism~\eqref{eq:KK_out_of_RY}.  We define~\(\delta\)
so that the maps \(\delta_{b,[c,d]}^\flat\)
are the maps denoted by~\(\delta_b^{[c,d]}\)
in Theorem~\ref{the:compare_obstruction_filtered}.

Now we must prove that the squares in the diagram commute.  We begin
on the right, comparing the two maps \(P_{00} \to A\).
Its restrictions \(\mathcal{R}_{[M(a,b),n]} \otimes Q_0[a,b] \to A\)
correspond to maps \(Q_0[a,b] \to \K_*(A[M(a,b),n])\)
under the isomorphism~\eqref{eq:KK_out_of_RY}.  This map is
\(\tautr{a,b}{M(a,b),n} \circ f[a,b]\)
both for the direct boundary map \(P_{00} \to A\)
and for the map through
\(\bigoplus_{e=1}^n \mathcal{R}_{[e,n]} \otimes A[e,n]\).
So this square commutes.

Next we compare the two maps from \(P_{10}\oplus P_{01}\)
to~\(\bigoplus_{b=1}^n \mathcal{R}_{[b,n]} \otimes A[b,n]\).
We first consider the restriction to~\(P_{01}\),
then to~\(P_{10}\).
The matrix coefficients of the map on~\(P_{01}\)
are maps
\(\mathcal{R}_{[a,n]} \otimes Q_0[a,b] \to \mathcal{R}_{[e,n]} \otimes
A[e,n]\)
for \(1 \le a \le b < n\)
and \(1 \le e \le n\).
Such maps correspond to group homomorphisms
\(Q_0[a,b] \to \K_*(\mathcal{R}_{[e,n]}[a,n] \otimes A[e,n])\).
Recall that \(\mathcal{R}_{[e,n]}[a,n] = \C\)
if \(a \le e\)
and~\(0\)
otherwise.  So we may assume without loss of generality that
\(a \le e\),
and then we get corresponding maps \(Q_0[a,b] \to \K_*(A[e,n])\).
The map~\(\gamma_{00}\)
picks out the summand with \(e = M(a,b) = b+1\),
and \(\bigl(\tautr{M(a,b),n}{a,n}\bigr)^*\)
induces the identity map
\(\Z \cong \K_*\bigl(\mathcal{R}_{[a,n]}([b+1,n])\bigr) \to
\K_*\bigl(\mathcal{R}_{[b+1,n]}([b+1,n])\bigr) \cong \Z\).
Therefore, the map in the square through~\(\gamma_{00}\)
contributes the map
\[
\delta_{e,b+1} \tautr{a,b}{e,n}\circ f[a,b]\colon Q_0[a,b]
\to \K_*(A[e,n]).
\]
When we map through~\(\gamma_{01}\)
instead, then we first map \(\mathcal{R}_{[a,n]} \otimes Q_0[a,b]\)
to the direct sum of \(\mathcal{R}_{[g,n]} \otimes Q_0[a,b]\)
over all \(g\in [a,b]\)
using \(\tau^*\otimes\id\)
and then apply the boundary map on~\(W_0\).
This gives a contribution in \(\K_*(A[e,n])\)
if \(g=e\)
or \(g=e-1\),
and these two contributions cancel each other for \(a<e \le b\).
For \(e=b+1\),
we get the same term as for the map that goes through~\(P_{00}\).
And we get~\(0\)
for \(e=a\)
because
\(\tautr{a+1,n}{a,n} \circ \tautr{a,b}{a+1,n} = \tautr{a,b}{a,n} =
0\).  So the two maps are equal on~\(P_{01}\).

The matrix coefficients of the two maps on~\(P_{10}\)
are maps
\(\mathcal{R}_{[M(a,b),n]} \otimes Q_1[a,b] \to \mathcal{R}_{[e,n]}
\otimes A[e,n]\)
for \(1 \le a \le b \le n\)
and \(1 \le e \le n\).
As above, we may assume \(M(a,b) \le e\)
because otherwise any such map is zero.  And then maps
\(\mathcal{R}_{[M(a,b),n]} \otimes Q_1[a,b] \to \mathcal{R}_{[e,n]}
\otimes A[e,n]\)
correspond to maps \(Q_1[a,b] \to \K_*(A[e,n])\).
We shall examine the difference of the map through~\(P_{00}\)
and the map through~\(W_0\).
We first consider the map through~\(P_{00}\).
It first applies the matrix \(\tau^* \otimes \varphi\),
going to the direct sum of
\(\mathcal{R}_{[M(c,d),n]} \otimes Q_0[c,d]\)
for \(1 \le c \le d \le n\).
The map \(Q_1[a,b] \to \K_*(A[e,n])\)
for the composite map through
\(\mathcal{R}_{[M(c,d),n]} \otimes Q_0[c,d]\)
is
\(\delta_{M(c,d),e} \tautr{c,d}{e,n} \circ f[c,d]\circ
\varphi_{[a,b]}^{[c,d]}\).
So we get the sum of these terms over all \(1\le c \le d \le n\).
When we apply \(\gamma_{10}\colon P_{10}\to W_0\),
then we apply the maps
\(\bigl(\tautr{g,n}{M(a,b),n}\bigr)^* \otimes
\varphi_{[a,b]}^{[c,d]}\)
to the direct summands \(\mathcal{R}_{[g,n]} \otimes Q_0[c,d]\)
of~\(W_0\),
where \(1 \le c \le d \le n\)
and \(1 \le g < n\)
are such that \([c,d] \to [g+1,n]\)
and \(M(a,b) \le g < M(c,d)\).
The condition \([c,d] \to [g+1,n]\)
is equivalent to \(c > g\)
if \(d=n\)
and \(c \le g < d+1\)
if \(d<n\).
So the set of~\(g\)
that are allowed is an interval \([x,y]\)
or empty.  The upper bound is always \(y \defeq M(c,d)-1\).
The lower bound~\(x\)
is \(M(a,b)\)
if \(d=n\)
or the maximum of \(c\)
and~\(M(a,b)\)
if \(d<n\).
By convention, we redefine \(x \defeq M(c,d)\)
if the lower bound is bigger than~\(M(c,d)\).
So~\(g\)
runs through the interval~\([x,y]\)
if \(x\le y\),
and otherwise \(x=y+1 = M(c,d)\)
and the set of possible~\(g\) is empty.

We must compose~\(\gamma_{10}\)
with the boundary map on~\(W_0\).
As above, this only contributes to the map
\(\mathcal{R}_{[M(a,b),n]} \otimes Q_1[a,b] \to \mathcal{R}_{[e,n]}
\otimes A[e,n]\)
if \(g=e\)
or \(g=e-1\).
And the contribution to the corresponding map
\(Q_1[a,b] \to \K_*(A[e,n])\)
is \(-\tautr{c,d}{e,n} \circ f[c,d]\circ \varphi_{[a,b]}^{[c,d]}\)
if \(g=e\)
and \(+\tautr{c,d}{e,n} \circ f[c,d]\circ \varphi_{[a,b]}^{[c,d]}\)
if \(g=e-1\).
The contributions for \(g=e\)
and \(g=e-1\)
cancel if both occur.  Therefore, when we sum over all~\(g\)
in the interval~\([x,y]\)
above, we get
\(-\tautr{c,d}{e,n} \circ f[c,d]\circ \varphi_{[a,b]}^{[c,d]}\)
if \(e=x\),
\(\tautr{c,d}{e,n} \circ f[c,d]\circ \varphi_{[a,b]}^{[c,d]}\)
if \(e=y+1\), and~\(0\) otherwise.  So we get the map
\[
(\delta_{e,y+1}-\delta_{e,x}) \cdot
\tautr{c,d}{e,n} \circ f[c,d]\circ \varphi_{[a,b]}^{[c,d]}\colon
Q_1[a,b] \to \K_*(A[e,n]).
\]
This formula remains correct if no~\(g\)
is allowed because then \(x=y+1\).
Since \(y+1 = M(c,d)\),
the map involving~\(\delta_{e,y+1}\)
is equal to the one that we get from the map through~\(P_{00}\).
So when we take the difference of the two maps in the square, this
term is cancelled.  We remain with
\begin{equation}
  \label{eq:sum_to_vanish}
  \sum_{[c,d]\to [a,b]} \delta_{e,x} \cdot \tautr{c,d}{e,n} \circ f[c,d]\circ \varphi_{[a,b]}^{[c,d]},
\end{equation}
where~\(x\)
depends on~\(a,b,c,d\)
as above.  Recall that we only need the case \(e \le M(a,b)\).
We are going to prove that the sum in~\eqref{eq:sum_to_vanish}
vanishes under this assumption.  First we have to study the lower
bound~\(x\)
for the different order relations among \(a,b,c,d\).
We may assume \([c,d] \to [a,b]\)
because otherwise \(\varphi_{[a,b]}^{[c,d]} = 0\).
So either \(a \le c \le b \le d\)
or \(c+1 \le a \le d+1 \le b\).
It is also important whether \(b=n\)
or \(d=n\).
First assume \(b=n\).
So \(M(a,b) = a\).
If \(a \le c \le b = d = n\),
then \(M(c,d) = c\)
and so \(x= a\).
If \(c+1 \le a \le d+1 \le b = n\),
then \(d<n\).
So \(M(c,d) = d+1\)
and \(x=a\)
as well.  Therefore, \(x=a\) and \([e,n] = [a,b]\)
whenever \(b=n\).
In this case, the sum in~\eqref{eq:sum_to_vanish} vanishes because
of~\eqref{eq:tau_f_phi_relation}.

Now assume \(b<n\),
so \(M(a,b) = b+1\).
If \(a \le c \le b < d = n\),
then \(M(c,d) = c < b+1\).
So \(x= M(c,d)\)
and the summand in~\eqref{eq:sum_to_vanish} vanishes because
\(e=x < M(a,b)\).
And \(\tautr{a,b}{b+1,n} \cdot \tautr{c,d}{a,b} =0\)
as well.  If \(a \le c \le b \le d < n\),
then \(M(c,d) = d+1\)
and \(x=b+1\).
In this case, we have
\(\tautr{c,d}{b+1,n} = \tautr{a,b}{b+1,n} \cdot \tautr{c,d}{a,b} \neq
0\).  So we may rewrite the sum in~\eqref{eq:sum_to_vanish} as
\[
\sum_{[c,d]\to [a,b]} \delta_{e,x} \cdot \tautr{c,d}{e,n} \circ f[c,d]\circ \varphi_{[a,b]}^{[c,d]}
= \sum_{[c,d]\to [a,b]} \tautr{a,b}{b+1,n} \circ \tautr{c,d}{a,b}
\circ f[c,d]\circ \varphi_{[a,b]}^{[c,d]},
\]
and this vanishes by~\eqref{eq:tau_f_phi_relation}.  This proves the
vanishing of~\eqref{eq:sum_to_vanish} in all cases and finishes the
proof that the square of maps
\(P_{01} \oplus P_{10} \to \bigoplus \mathcal{R}_{[e,n]} \otimes
A[e,n]\) commutes.

Next, we consider the maps \(P_{11} \to W_0\).
We look at the matrix coefficient
\(\mathcal{R}_{[a,n]} \otimes Q_1[a,b] \to \mathcal{R}_{[e,n]} \otimes
Q_0[c,d]\)
of the maps \(P_{11} \to W_0\)
through \(W_1\),
\(P_{10}\)
and~\(P_{01}\)
for fixed \(1 \le a \le b < n\),
\(1 \le e < n\),
\([c,d] \to [e+1,n]\).
In~\(W_1\),
we have summands \(\mathcal{R}_{[e,n]} \otimes Q_1[a,b]\)
for those~\(e\)
with \([a,b] \to [e+1,n]\),
which is equivalent to \(a \le e \le b\)
because \(b<n\).
The map~\(\gamma_{11}\)
maps the summands \(\mathcal{R}_{[a,n]} \otimes Q_1[a,b]\)
in~\(P_{11}\)
to each of these summands through
\(\bigl(\tautr{e,n}{a,n}\bigr)^* \otimes \id\).
The boundary map \(W_1 \to W_0\)
is obtained by tensoring the boundary map
in~\eqref{eq:varphi_f_exact_sequence_ab} with
\(\id_{\mathcal{R}[e,n]}\).
So we get the contribution
\(\bigl(\tautr{e,n}{a,n}\bigr)^* \otimes \varphi_{[a,b]}^{[c,d]}\)
to our matrix coefficient if \([c,d] \to [a,b] \to [e,n]\),
and~\(0\)
otherwise.  The boundary map to~\(P_{10}\)
maps the summands \(\mathcal{R}_{[a,n]} \otimes Q_1[a,b]\)
in~\(P_{11}\)
to the summand \(\mathcal{R}_{[b+1,n]} \otimes Q_1[a,b]\)
through \(\bigl(\tautr{b+1,n}{a,n}\bigr)^* \otimes \id\).
When we continue with~\(\gamma_{10}\),
we get the contribution
\(\bigl(\tautr{e,n}{a,n}\bigr)^* \otimes \varphi_{[a,b]}^{[c,d]}\)
to our matrix coefficient if and only if \(M(a,b) \le e < M(c,d)\).
The map through~\(P_{01}\)
gives the contribution
\(-\bigl(\tautr{e,n}{a,n}\bigr)^* \otimes \varphi_{[a,b]}^{[c,d]}\)
to our matrix coefficient if \(d<n\)
and \(a \le c \le e\)
(recall that the summands \(\mathcal{R}_{[c,n]} \otimes Q_0[c,d]\)
of~\(P_{01}\) only run over \(1 \le d < n\)).

We must show that the sum of these terms is~\(0\).
Again we look at different cases regarding the order among
\(a,b,c,d,e\).
We may assume \(b<n\)
and \([c,d] \to [e+1,n]\)
because our matrix coefficient is only defined in this case.  And we
may assume \([e,n] \to [a,n]\)
and \([c,d] \to [a,b]\)
because otherwise
\(\bigl(\tautr{e,n}{a,n}\bigr)^* \otimes \varphi_{[a,b]}^{[c,d]}=0\).
Besides \(b<n\),
these assumptions mean, first, that \(e+1 \le c \le n = d\)
or \(c+1 \le e+1 \le d+1 \le n\)
holds; secondly, \(a \le e\);
and, thirdly, \(a \le c \le b \le d\)
or \(c+1 \le a \le d+1 \le b\).
Assume first that \(d=n\).
Then our assumptions imply \(a \le e < c \le b < d = n\).
In this case, none of the three maps \(P_{11} \to W_0\)
give a non-zero contribution because \([c,d] \to [a,b] \to [e,n]\)
is impossible, \(M(a,b) = b+1 > c = M(c,d)\)
and \(d=n\).
So we may assume \(d<n\)
from now on.  If \(c+1 \le a\),
then it follows that \(c+1 \le a \le e< d+1 \le b<n\).
Again, none of the three maps \(P_{11} \to W_0\)
give a non-zero contribution in this case.  So we may assume
\(a \le c\).
Then either \(a \le c \le e \le b \le d < n\)
or \(a \le c \le b < e \le d < n\).
In the first case, the maps through \(W_1\)
and~\(P_{01}\)
give contributions that cancel each other, and the map
through~\(P_{10}\)
gives no contribution because \(e < M(a,b) = b+1\).
In the second case, the maps through \(P_{10}\)
and~\(P_{01}\)
give contributions that cancel each other, and the map through~\(W_1\)
vanishes because \(b<e\).
Hence we get~\(0\)
in all cases, as needed.  This finishes the proof that the square of
maps \(P_{11} \to W_0\) commutes.

Finally, we compute the composite map
\(\delta\circ \gamma_{11} \colon P_{11} \to A\)
in our commuting diagram.  Consider the restriction to
\(\mathcal{R}_{[a,n]} \otimes Q_1[a,b]\)
for some \(1 \le a \le b <n\).
This map corresponds to a map \(Q_1[a,b] \to \K_*(A[a,n])\)
by~\eqref{eq:KK_out_of_RY}.  The map
\(\delta_{e,[c,d]}^\flat = \delta_e^{[c,d]}\)
vanishes unless \(e=c\),
and the matrix coefficient~\(\gamma_{00}^{c,[c,d],[a,b]}\)
vanishes for \([a,b] \neq [c,d]\)
and is the identity map if \([a,b]=[c,d]\).
So the composite map corresponds simply to the map
\[
\sum_{[c,d] \to [a,n] \to [a,b]} \tautr{c,d}{a,n} \circ f[c,d] \circ \varphi_{\7{a,b}}^{\7{c,d}}
\colon Q_1[a,b] \to \K_*(A[a,n]).
\]
This is exactly the formula for the obstruction class
in~\eqref{eq:odd_phi_explicit_filtered}.  This finishes the proof of
Theorem~\ref{the:compare_obstruction_filtered}.

\subsection{The case of extensions}
\label{sec:extensions}

We now specialise to the case \(n=2\).
Then an object of \(\KKcat^X\)
is equivalent to a \(\Cst\)\nb-algebra
extension
\[
I\overset{i}{\into} A \overset{r}{\prto} A/I,
\]
where \(I = A[2]\),
\(A = A[1,2]\)
and \(A/I = A[1]\)
and the maps are those in~\eqref{eq:exact_module_NT}.  Here we
abbreviate \([1]=[1,1]\)
and \([2]=[2,2]\).
The filtrated \(\K\)\nb-theory
is the six-periodic exact chain complex
\begin{equation}
  \label{eq:K_exact_sequence}
  \begin{tikzcd}
    \K_0(I) \arrow[r, "i_*"] &
    \K_0(A) \arrow[r, "r_*"] &
    \K_0(A/I) \arrow[d, "\delta"] \\
    \K_1(A/I) \arrow[u, "\delta"] &
    \K_1(A) \arrow[l, "r_*"] &
    \K_1(I) \arrow[l, "i_*"]
  \end{tikzcd}
\end{equation}
The morphisms between the filtrated \(\K\)\nb-theory
invariants are grading-preserving chain maps (morphisms of six-term
exact sequences).

The invariant in Theorem~\ref{the:Z_classify} is the \(\KK\)\nb-class
\([i]\in \KK_0(I,A)\).
The invariant in Theorem~\ref{the:classification_X_obstruction} is the
induced map \(i_*\colon \K_*(I) \to \K_*(A)\),
together with the obstruction class.  To compute the latter, let
\(i^- \in \Ext\bigl(\K_{1+*}(I), \K_*(A)\bigr)\)
be the parity-reversing part of~\([i]\)
in the Universal Coefficient Theorem for \(\KK_0(I,A)\).
The obstruction class is the image of~\(i^-\)
in the cokernel of the map
\begin{multline}
  \label{eq:obstruction_cokernel}
  \Ext\bigl(\K_{1+*}(I), \K_*(I)\bigr) \oplus \Ext\bigl(\K_{1+*}(A), \K_*(A)\bigr)
  \to \Ext\bigl(\K_{1+*}(I), \K_*(A)\bigr),\\
  (t_I,t_A) \mapsto i\circ t_I + t_A\circ i.
\end{multline}
It follows from our theory that~\(i_*\)
and the image of~\(i^-\)
in the cokernel of~\eqref{eq:obstruction_cokernel} determine an object
of~\(\Boot^X\)
uniquely up to \(\KK^X\)-equivalence.
The cokernel comes in because there are isomorphisms of
\(\Cst\)\nb-algebra
extensions that act identically on \(\K_*(I)\)
and~\(\K_*(A)\),
but have non-trivial components in
\(\Ext\bigl(\K_{1+*}(I), \K_*(I)\bigr)\)
or \(\Ext\bigl(\K_{1+*}(A), \K_*(A)\bigr)\).
So the isomorphism class of an object in~\(\KKcat^X\)
does not determine~\(i^-\)
uniquely.  Only its image in the cokernel
of~\eqref{eq:obstruction_cokernel} is unique.  And our theory shows
that isomorphism classes of pairs consisting of
\(i_*\in \Hom\bigl(\K_*(I),\K_*(A)\bigr)\)
and an element in the cokernel of~\eqref{eq:obstruction_cokernel} are
in bijection with isomorphism classes of objects in the bootstrap
class \(\Boot^X \subseteq \KKcat^X\).

We are going to compare this classification result with the filtrated
\(\K\)\nb-theory
classification by the long exact sequences
in~\eqref{eq:K_exact_sequence}.  The long exact sequence
in~\eqref{eq:K_exact_sequence} contains~\(i_*\) and the extension
\begin{equation}
  \label{eq:extension_from_six-term}
  \coker\bigl(i\colon \K_*(I) \to \K_*(A)\bigr)
  \into \K_*(A/I)
  \prto \ker\bigl(i\colon \K_{*+1}(I) \to \K_{*+1}(A)\bigr),
\end{equation}
and we may reconstruct the long exact sequence from these two pieces.
Two extensions as in~\eqref{eq:extension_from_six-term} have the same
class in Ext if and only if the long exact sequences associated to
them (for the same~\(i_*\))
are isomorphic with an isomorphism that is the identity on \(\K_*(I)\)
and~\(\K_*(A)\).
Therefore, the filtrated \(\K\)\nb-theory
invariant is equivalent to the pair consisting of
\(i_*\colon \K_*(I) \to\K_*(A)\)
and a class in \(\Ext\bigl(\ker(i_*), \coker(i_*)\bigr)\).
Now the following proposition clarifies the relationship between our
different invariants:

\begin{proposition}
  \label{pro:obstruction_class_for_extensions}
  The cokernel in~\eqref{eq:obstruction_cokernel} is naturally
  isomorphic to the group
  \[
  \Ext^2_{\Abel^X}(\Sigma A,A) \cong
  \Ext\Bigl(\ker\bigl(\K_{*+1}(I) \xrightarrow{i_*} \K_{*+1}(A)\bigr),
  \coker\bigl(\K_*(I) \xrightarrow{i_*} \K_*(A)\bigr)\Bigr).
  \]
  And the obstruction class is the class that corresponds to minus the
  extension in~\eqref{eq:extension_from_six-term}.
\end{proposition}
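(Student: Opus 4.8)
The plan is to read off the obstruction class from Theorem~\ref{the:obstruction_class_X} in the case $n=2$ and then to match the ambient $\Ext^2$-group with the two groups in the statement. For $n=2$ the space is $X=\{1\leftarrow 2\}$, the only edge is $2\to 1$, and $A(U_1)=A$, $A(U_2)=I$, so the $\KK$-class $\beta_{1,2}\in\KK_0(I,A)$ of the inclusion is $[i]$. Thus Theorem~\ref{the:obstruction_class_X} identifies the obstruction class of $A$ with the class, in the cokernel of~\eqref{eq:cokernel_Ext_2_X}, of the parity-reversing part $i^{-}\defeq\beta_{1,2}^{1}\in\Ext(\K_{1+*}(I),\K_*(A))$ of $[i]$. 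Spelling out~\eqref{eq:cokernel_Ext_2_X} for $G=\Sigma F^X_\K(A)$ and $H=F^X_\K(A)$ with $n=2$ (one edge $2\to 1$), it is the map sending $(t_A,t_I)\in\Ext(\K_{1+*}(A),\K_*(A))\oplus\Ext(\K_{1+*}(I),\K_*(I))$ to $i_*\circ t_I-t_A\circ i_*\in\Ext(\K_{1+*}(I),\K_*(A))$, where the first $i_*$ is $\K_*(I)\to\K_*(A)$ and the second is $\K_{1+*}(I)\to\K_{1+*}(A)$. This differs from~\eqref{eq:obstruction_cokernel} only by the automorphism $t_A\mapsto-t_A$ of its source and hence has the same cokernel; by Lemma~\ref{lem:Ext_2_over_X} that cokernel is $\Ext^2_{\Abel^X}(\Sigma F^X_\K(A),F^X_\K(A))$. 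This gives the first isomorphism.

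Next I would identify this cokernel with $\Ext(K,\coker i_*)$, where $K\defeq\ker(i_*\colon\K_{1+*}(I)\to\K_{1+*}(A))$ and $\coker i_*$ is taken in degree~$*$, by two reductions of $\Ext(\K_{1+*}(I),\K_*(A))$, each valid because $\Ext^2$ of abelian groups vanishes. First, the long exact sequences obtained by applying $\Hom(-,\K_*(A))$ to $0\to K\to\K_{1+*}(I)\to\operatorname{im} i_*\to 0$ and to $0\to\operatorname{im} i_*\to\K_{1+*}(A)\to\coker i_*\to 0$ show that restriction along $K\into\K_{1+*}(I)$ has kernel $(i_*)^{*}\Ext(\K_{1+*}(A),\K_*(A))$, hence induces $\Ext(\K_{1+*}(I),\K_*(A))/(i_*)^{*}\Ext(\K_{1+*}(A),\K_*(A))\cong\Ext(K,\K_*(A))$. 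Second, restriction $\Ext(\K_{1+*}(I),\K_*(I))\prto\Ext(K,\K_*(I))$ is onto (again $\Ext^2=0$) and natural in the coefficient group, so under the first isomorphism the subgroup $(i_*)_{*}\Ext(\K_{1+*}(I),\K_*(I))$ maps onto $(i_*)_{*}\Ext(K,\K_*(I))$; applying $\Hom(K,-)$ to $0\to\ker i_*\to\K_*(I)\to\operatorname{im} i_*\to 0$ and to $0\to\operatorname{im} i_*\to\K_*(A)\to\coker i_*\to 0$ (now in degree~$*$) shows that pushout along $\K_*(A)\prto\coker i_*$ induces $\Ext(K,\K_*(A))/(i_*)_{*}\Ext(K,\K_*(I))\cong\Ext(K,\coker i_*)$. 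Composing, the cokernel of~\eqref{eq:obstruction_cokernel} is naturally $\Ext(K,\coker i_*)=\Ext^2_{\Abel^X}(\Sigma F^X_\K(A),F^X_\K(A))$, and the composite surjection from $\Ext(\K_{1+*}(I),\K_*(A))$ sends a class to its restriction to $K$ pushed out to $\coker i_*$. In particular the obstruction class of $A$ is this restriction-and-pushout of~$i^{-}$.

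It remains to identify this with minus the class of~\eqref{eq:extension_from_six-term}. Since $i\colon I\hookrightarrow A$ is an ideal inclusion, $A/I$ is the mapping cone of $[i]$ in $\KKcat$, so $\K_*$ applied to the exact triangle $I\xrightarrow{[i]}A\to A/I\to\Sigma I$ is the six-term sequence~\eqref{eq:K_exact_sequence}, whose middle short exact sequence is precisely~\eqref{eq:extension_from_six-term}. I would then use the general principle that for $f\in\KK_0(P,Q)$ with $P,Q$ in the bootstrap class and Universal Coefficient Theorem decomposition $f=f^{0}+f^{1}$, $f^{1}\in\Ext(\K_{1+*}(P),\K_*(Q))$, the extension $\coker f_*\into\K_*(\operatorname{cone} f)\prto\ker f_*$ coming from the cone triangle (with $\ker f_*$ in degree~$*+1$) has class equal to \emph{minus} the restriction of $f^{1}$ to $\ker f_*$ pushed out to $\coker f_*$. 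Applied to $f=[i]$, $P=I$, $Q=A$, this says that the class of~\eqref{eq:extension_from_six-term} equals minus the restriction-and-pushout of $i^{-}$, that is, minus the obstruction class; this is the second assertion.

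The content-ful step is this last principle, and especially the sign. The plan for it is to choose a length-one geometric resolution $R_1\xrightarrow{d}R_0\xrightarrow{\rho}P\xrightarrow{\omega}\Sigma R_1$ of $P$ with $R_0,R_1$ of free $\K$-theory, to write $f^{1}=\omega^{*}(\eta)$ for a lift $\eta\colon\Sigma R_1\to Q$ of a cocycle representing $f^{1}$, and to apply the octahedral axiom to $R_0\xrightarrow{\rho}P\xrightarrow{f}Q$; since $f\circ\rho$ has no Ext part the $\K$-theory of its cone is transparent, and a diagram chase then yields the extension class of $\K_*(\operatorname{cone} f)$ together with its sign, which enters through the rotation $\Sigma P\xrightarrow{-\Sigma f}\Sigma Q$ of the cone triangle. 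This is the same sign bookkeeping that turns the Universal Coefficient Theorem exact sequence into the six-term and Pimsner--Voiculescu sequences, so it can alternatively be quoted from~\cite{Meyer-Nest:Homology_in_KK}. Everything else in the argument is routine homological algebra.
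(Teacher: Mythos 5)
Your proof is correct in outline but takes a genuinely different route from the paper for the second assertion. For the first assertion (the identification of the cokernel with $\Ext(\ker(i_*),\coker(i_*))$), you use essentially the same two-step reduction by long exact sequences that the paper does; no essential difference there. For the second assertion, however, the paper works through Theorem~\ref{the:compare_obstruction_filtered}: it uses the explicit filtrated $\K$-theory resolution, identifies the obstruction class as $\delta_1^{[1]}=f[1,2]\circ\varphi_{[1]}^{[1,2]}$, and then uses the commuting diagram~\eqref{eq:extension_filtered_K_theory_extension} together with~\eqref{eq:tau_f_phi_relation} to deduce $r_*\circ\delta_1^{[1]}=-f[1]\circ\varphi_{[1]}^{[1]}$, which is then identified with minus the class of~\eqref{eq:extension_from_six-term}. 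You instead bypass filtrated $\K$-theory entirely: you invoke Theorem~\ref{the:obstruction_class_X} directly (obstruction class $=$ image of $i^-$, which the paper in fact records just before the proposition) and then quote a general principle relating, for $f\in\KK_0(P,Q)$, the parity-reversing part $f^1$ to minus the $\K$-theory extension class of $\operatorname{cone}(f)$. This is conceptually cleaner and avoids the long Section~\ref{sec:proof_filtrated_theorem} apparatus, but it relocates all the delicacy (especially the sign) into a general lemma you only sketch. The principle you invoke is standard UCT folklore and your octahedron sketch is the right strategy to prove it, but as written it is an assertion rather than a proof; the paper's route, while more laborious, has the advantage of being self-contained once Theorem~\ref{the:compare_obstruction_filtered} is in place, and the sign emerges mechanically from~\eqref{eq:tau_f_phi_relation} rather than from a separate cone-triangle argument. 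If you want to make your version complete, you should either carry out the octahedral-axiom computation with explicit attention to the sign in the rotation of the cone triangle, or locate a precise reference for the principle; otherwise the argument has a genuine (if small and fillable) gap precisely at the point where the content lies.
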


\begin{proof}
  Let~\(G\)
  be a \(\Z/2\)\nb-graded
  Abelian group.  By the long exact sequence for Hom and Ext,
  the restriction map
  \(\Ext\bigl(\K_{*+1}(A), G\bigr) \to \Ext\bigl(i_*(\K_{*+1}(I)),G\bigr)\)
  is surjective and
  \begin{multline*}
    \dotsb \to
    \Ext\bigl(i_*(\K_{*+1}(I)),G\bigr)
    \to \Ext\bigl(\K_{*+1}(I),G\bigr)
    \\\to \Ext\bigl(\ker(i_*\colon \K_{*+1}(I) \to \K_{*+1}(A)),G\bigr)
    \to 0
  \end{multline*}
  is exact.  Hence the cokernel of the map
  \(i^*\colon \Ext\bigl(\K_{*+1}(A),G\bigr) \to \Ext\bigl(\K_{*+1}(I),G\bigr)\)
  is \(\Ext(\ker(i_*),G)\).
  If we let \(G \defeq \K_*(A)\),
  then we may identify the cokernel of the map
  in~\eqref{eq:obstruction_cokernel} with the cokernel of the map
  \[
  \Ext\bigl(\K_{*+1}(I),\K_*(I)\bigr) \xrightarrow{\text{restrict}}
  \Ext\bigl(\ker(i_*),\K_*(I)\bigr)
  \xrightarrow{i_*} \Ext\bigl(\ker(i_*), \K_*(A)\bigr).
  \]
  Since the first map is surjective, this is equal to the cokernel of
  \[
  i_*\colon \Ext\bigl(\ker(i_*),\K_*(I)\bigr) \to \Ext\bigl(\ker(i_*), \K_*(A)\bigr).
  \]
  A variant of the proof above for the second variable identifies this
  cokernel with the group \(\Ext\bigl(\ker(i_*), \coker(i_*)\bigr)\)
  as claimed.  Given a class
  \(\delta\in\Ext\bigl(\K_{1+*}(I), \K_*(A)\bigr)\),
  the map to \(\Ext\bigl(\ker(i_*), \coker(i_*)\bigr)\)
  simply applies the bifunctoriality of Ext for the quotient map
  \[
  \K_*(A)\to \coker\bigl(i_*\colon \K_*(I) \to \K_*(A)\bigr)
  \]
  and the inclusion map
  \[
  \ker\bigl(i_*\colon \K_{*+1}(I) \to \K_{*+1}(A)\bigr) \to \K_{*+1}(I).
  \]
  It remains to compute the image of the obstruction class in
  Theorem~\ref{the:compare_obstruction_filtered} in
  \(\Ext\bigl(\ker(i_*),\coker(i_*)\bigr)\).
  We recall the data used in
  Theorem~\ref{the:compare_obstruction_filtered} in our special case.
  The semi-simple part of the filtrated \(\K\)\nb-theory consists of
  \begin{align*}
    \K_*(A[2])_\mathrm{ss} &\cong \frac{\K_*(I)}{\delta(\K_{*-1}(A/I))}
                             \cong i_*(\K_*(I)) \subseteq \K_*(A),\\
    \K_*(A[1,2])_\mathrm{ss} &\cong \frac{\K_*(A)}{i_*(\K_*(I))}
                               = \coker(i_*),\\
    \K_*(A[1])_\mathrm{ss} &\cong \frac{\K_*(A/I)}{r_*(\K_*(A))}
                             \cong \delta_*(\K_*(A/I)) = \ker(i_*)\subseteq \K_{*-1}(I).
  \end{align*}
  Our construction is based on free resolutions of these
  \(\Z/2\)\nb-graded
  Abelian groups.  In particular, we use a free resolution
  \(Q_1[1] \into Q_0[1] \prto \ker(i_*)\).
  We lift these resolutions to free resolutions
  \begin{gather*}
    Q_1[2] \oplus Q_1[1] \into Q_0[2] \oplus Q_0[1] \prto \K_*(A[2]),\\
    Q_1[1,2] \oplus Q_1[2] \into Q_0[1,2] \oplus Q_0[2] \prto \K_*(A[1,2]),\\
    Q_1[1] \oplus Q_1[1,2] \into Q_0[1] \oplus Q_0[1,2] \prto \K_*(A[1]),
  \end{gather*}
  which contain the maps \((\tau f[a,b])^\#\)
  and \(\varphi_{[a,b]}^{[c,d]}\).
  We shall need the maps in the third extension and put them into a
  larger diagram, which commutes because of the construction of the
  maps \((\tau f[a,b])^\#\) and \(\varphi_{[a,b]}^{[c,d]}\):
  \begin{equation}
    \label{eq:extension_filtered_K_theory_extension}
    \begin{tikzpicture}[baseline=(current bounding box.west)]
      \matrix(m)[cd,column sep = 6em]{
        Q_1[1] \oplus Q_1[1,2]&
        Q_0[1] \oplus Q_0[1,2]&
        \K_*(A/I)\\
        Q_1[1]&
        Q_0[1]&
        \frac{\K_*(A/I)}{r_*(\K_*(A))}\\
        r_*(\K_*(A))&
        \K_*(A/I)&
        \ker(i_*)\\};
      \draw[cdar,>->](m-1-1) -- node[yshift=2.2ex] {\(\scriptstyle\matrixintikz\)} (m-1-2);
      \draw[cdar, ->>] (m-1-2) -- node {\(\scriptstyle(f\7{1}\ r_* f\7{1,2})\)} (m-1-3);
      \draw[cdar,>->](m-2-1) -- node {\(\scriptstyle\varphi_{\7{1}}^{\7{1}}\)} (m-2-2);
      \draw[cdar, ->>] (m-2-2) -- node {\(\scriptstyle d_0\)} (m-2-3);
      \draw[cdar,>->](m-3-1) -- node {\(\scriptstyle \text{incl.}\)} (m-3-2);
      \draw[cdar, ->>] (m-3-2) -- node {\(\scriptstyle \delta_*\)} (m-3-3);
      \draw[cdar, ->>] (m-1-1) -- node {\(\scriptstyle\mathrm{pr}_1\)} (m-2-1);
      \draw[cdar, ->>] (m-1-2) -- node {\(\scriptstyle\mathrm{pr}_1\)} (m-2-2);
      \draw[cdar, ->>] (m-1-3) -- node {\(\scriptstyle\mathrm{can.}\)} (m-2-3);
      \draw[cdar, ->>] (m-2-1) -- node
      {\(\scriptstyle f\7{1}\circ \varphi_{\7{1}}^{\7{1}}\)} (m-3-1);
      \draw[cdar, ->>] (m-2-2) -- node {\(\scriptstyle f\7{1}\)} (m-3-2);
      \draw[cdar] (m-2-3) -- node[swap] {\(\scriptstyle\cong\)} node {\(\scriptstyle\delta\)} (m-3-3);
    \end{tikzpicture}
  \end{equation}
  The matrix coefficient~\(\delta_e^{[a,b]}\)
  in Theorem~\ref{the:compare_obstruction_filtered} is defined only if
  \(e = a \le b<n\).
  In our case \(n=2\),
  there is only one such matrix coefficient, namely,
  \(\delta_1^{[1]} \colon Q_1[1] \to \K_*(A[1,2]) = \K_*(A)\).
  The sum defining it has only one summand, which is indexed by
  \([1,2] \to [1,2] \to [1]\).  So
  \[
  \delta_1^{[1]} = f[1,2] \circ \varphi_{[1]}^{[1,2]} \colon Q_1[1] \to \K_*(A).
  \]
  We compose this map with the quotient map
  \(\K_*(A) \prto \coker(i_*)\).
  The exact sequence~\eqref{eq:K_exact_sequence} shows that~\(r_*\)
  induces an isomorphism from \(\coker(i_*)\)
  onto \(r_*(\K_*(A)) \subseteq \K_*(A/I)\).
  So we may as well compose with the map
  \(r_*\colon \K_*(A) \to r_*(\K_*(A))\).
  The composite of the two maps in the top row
  in~\eqref{eq:extension_filtered_K_theory_extension} is~\(0\).
  Thus \(r^* \circ \delta_1^{[1]} = - f[1] \circ \varphi_{[1]}^{[1]}\)
  (compare~\eqref{eq:tau_f_phi_relation}).  The commuting
  diagram~\eqref{eq:extension_filtered_K_theory_extension} shows that
  the group extension of~\(\ker(i_*)\)
  in~\eqref{eq:extension_from_six-term} belongs to the map
  \(f[1] \circ \varphi_{[1]}^{[1]}\).
  The obstruction class belongs to the negative of this map.
\end{proof}

\begin{bibdiv}
  \begin{biblist}
    \bibselect{references}
  \end{biblist}
\end{bibdiv}
\end{document}